\documentclass[twocolumn,amsthm]{autart}

\makeatletter
\def\@overkeywordskip{0pt}
\makeatother

\usepackage{xcolor}
\usepackage{amsmath,amssymb,amsfonts}
\usepackage{graphicx}
\usepackage{xr}
\usepackage{float}
\usepackage{needspace}
\usepackage{mathtools}
\usepackage{amsthm}
\usepackage{amsfonts}
\usepackage{amsmath}
\usepackage{setspace}
\usepackage{amssymb}
\usepackage{bm}
\usepackage{wrapfig}
\usepackage{algorithm}
\usepackage{algpseudocode}
\usepackage{algorithmicx}
\usepackage{dashrule}
\usepackage{lmodern}
\usepackage{multirow}
\usepackage{enumitem}
\usepackage{fix-cm}
\usepackage{tikz}
\usetikzlibrary{arrows.meta,decorations.pathreplacing}
\usepackage{tabularx}
\usepackage{booktabs}

\usepackage[numbers,sort&compress]{natbib}
\usepackage[hyphens]{url}
\usepackage{etoolbox}

\usepackage{mathtools}
\usepackage{xurl}
\usepackage[colorlinks=true,linkcolor=red,citecolor=blue,urlcolor=blue]{hyperref}
\AddToHook{env/frontmatter/after}{\endNoHyper}

\newtheorem{theorem}{Theorem}
\newtheorem{remark}{Remark}

\newtheorem{corollary}{Corollary}
\newtheorem{lemma}{Lemma}

\newtheorem{assumption}{Assumption}

\newcommand{\diag}{\text{diag}}
\newcommand{\tablesize}{\fontsize{8}{9.6}\selectfont}

\definecolor{lighter-green}{rgb}{0, 0.6, 0.00392156862} 

\newlength{\leftstackrelawd}
\newlength{\leftstackrelbwd}
\def\leftstackrel#1#2{\settowidth{\leftstackrelawd}%
{${{}^{#1}}$}\settowidth{\leftstackrelbwd}{$#2$}%
\addtolength{\leftstackrelawd}{-\leftstackrelbwd}%
\leavevmode\ifthenelse{\lengthtest{\leftstackrelawd>0pt}}%
{\kern-.5\leftstackrelawd}{}\mathrel{\mathop{#2}\limits^{#1}}}

\graphicspath{{../}}
\allowdisplaybreaks
\AtBeginDocument{
\setlength{\abovedisplayskip}{6pt plus 2pt minus 2pt}
\setlength{\belowdisplayskip}{6pt plus 2pt minus 2pt}
\setlength{\abovedisplayshortskip}{0pt plus 2pt}
\setlength{\belowdisplayshortskip}{3pt plus 2pt minus 2pt}
}

\begin{document}
\begin{frontmatter}
\title{An Adaptive, Parallel, and Inexact Newton Method for Large-scale Nonlinear Optimal Control}
\author[ucsd]{Luke Bhan\corauthref{corresponding}}\ead{lbhan@ucsd.edu},\quad
\author[berkeley]{Michael W. Mahoney}\ead{mmahoney@stat.berkeley.edu},\quad
\author[gatech]{Sen Na}\ead{senna@gatech.edu}
\corauth[corresponding]{Corresponding author.}
\begingroup
\setlength{\parskip}{0pt}
\address[ucsd]{University of California, San Diego, USA}
\address[berkeley]{University of California, Berkeley, USA}
\address[gatech]{Georgia Institute of Technology, USA}
\endgroup

\begin{abstract}
We design \emph{adaptive overlapping temporal decomposition} (AOTD), a parallel sequential quadratic programming (SQP) method for long-horizon nonlinear optimal control problems (OCPs). As designed, AOTD is the first overlapping temporal-decomposition (OTD) algorithm that adapts the overlap size online, eliminating the need to fix a suitable overlap a priori. Specifically, at each iteration, it partitions the long horizon into overlapping nonlinear subproblems, performs one SQP step on each in parallel, and adaptively selects both the overlap size and the accuracy to which the resulting local KKT systems are solved. Two conditions govern the accepted step: an adaptive \emph{residual condition} bounding the distance from the concatenated inexact direction to the exact Newton direction of the full problem, and a  \emph{descent condition} on the adaptive penalty parameters that certifies descent for a carefully chosen exact augmented Lagrangian. The residual condition is solver independent and accommodates both deterministic iterative solvers and randomized sketching solvers. Under standard regularity assumptions, we prove that the KKT residual converges to zero from any initialization and establish a local linear rate for sequences converging to a strict local minimizer. 
We validate AOTD on a power-system frequency regulation OCP and a Burgers PDE OCP, attaining a reduction in estimated FLOPs of approximately 2.5$\times$ to the best fixed-overlap variant, while converging across multiple horizon lengths with a single parameter setting.
\end{abstract}
\end{frontmatter}
\setlength{\parskip}{.5em} 

\section{Introduction} \label{sec:introduction}
In this work, we study long-horizon nonlinear dynamic programs (NLDPs) with equality constraints:
\begin{subequations}\label{eq:main-problem}
\begin{align}
\min_{\{\bm{x}_k\}, \{\bm{u}_k\}} \quad &\sum_{k=0}^{N-1}  g_k(\bm{x}_k, \bm{u}_k) + g_N(\bm{x}_N)\,, \\
\text{s.t.} \quad &\bm{x}_{k+1} = f_k(\bm{x}_k, \bm{u}_k)\,, \\
\quad   &\bm{x}_0 = \bar{\bm{x}}_0\,.
\end{align}
\end{subequations}
Here, $\bm{x}_k \in \mathbb{R}^{n_x}$ and $\bm{u}_k \in \mathbb{R}^{n_u}$ denote the state and control input with initial condition $\bm{x}_0=\bar{\bm{x}}_0$. The functions $g_k:\mathbb{R}^{n_x}\times\mathbb{R}^{n_u}\to\mathbb{R}$ and $g_N:\mathbb{R}^{n_x}\to\mathbb{R}$ denote the stage and terminal costs, while $f_k:\mathbb{R}^{n_x}\times\mathbb{R}^{n_u}\to\mathbb{R}^{n_x}$ defines the system dynamics. The horizon length is denoted by $N$. We allow the functions $f_k$ and $g_k$ to be nonconvex, so \eqref{eq:main-problem} is a nonlinear dynamic program. Such programs provide a flexible framework for modeling and solving sequential decision-making problems across a wide range of applications, including power systems~\cite{DIAS2013212,10115409}, robotics~\cite{6386025}, finance~\cite{TOPALOGLOU20081501}, and medicine~\cite{LIBOTTE2020105664}.

In this study, we focus on a subset of NLDPs known as \emph{long-horizon} optimal control problems (OCPs) in which $N$, the number of stages, is extremely large. Such problems commonly arise from fine temporal discretizations of dynamical systems~\cite{9658150,6953879}. For these problems, centralized nonlinear solvers, such as IPOPT~\cite{10.5555/3114195.3114663}, can require substantial computational resources and may fail because of computational and memory constraints~\cite{10.5555/3114195.3114663,ZAVALA201643}. The standard technique to resolve this is parallelization. In particular, most approaches decompose the long temporal horizon into a set of tractable subproblems that can be solved in parallel across a cluster of nodes.

These subproblem solutions are then concatenated via a coordination mechanism. For example, the alternating direction method of multipliers (ADMM) \cite{8186925} and Lagrangian dual decomposition \cite{1428793} are common first-order coordination methods. Other approaches, such as multiple shooting~\cite{BOCK19841603}, accommodate both first- and second-order nonlinear program (NLP) solvers. These methods provide a flexible framework for distributed computation, but their performance depends on how effectively information is exchanged across temporal boundaries. In particular, ADMM can converge slowly to high accuracy, and its behavior on nonconvex problems can depend on initialization and penalty selection~\cite{8186925}. This motivates decomposition methods that exploit the temporal structure of the optimality conditions while retaining the computational advantages of parallel local solves.

To resolve these pitfalls, overlapping temporal decomposition has emerged as a computationally efficient approach for long-horizon nonlinear OCPs \cite{doi:10.1137/16M1081993,zhao2025overlappingschwarzschemelinearquadratic,ni2024distributedsequentialquadraticprogramming}. The core idea is to extend each subproblem's interval at its boundaries to form overlapping subdomains, trading slightly larger subproblems for faster convergence by reducing the influence of boundary errors. Empirically, this trade-off has been worthwhile for certain long-horizon problems (e.g~\cite{osti_1123223}).

Recent work has also formalized this theoretically. In particular, \cite{doi:10.1137/19M1265065,9840913} showed the exponential decay of sensitivity (EDS) of OCPs. The core result demonstrates that, under suitable uniform regularity conditions, the influence of perturbations at a subproblem boundary decreases exponentially with the distance from that boundary. Consequently, increasing the overlap reduces the effect of boundary errors on the retained interior trajectory. For overlapping Schwarz methods, this property yields a local linear convergence rate that improves exponentially with the overlap size. Related results have also been developed for continuous-time linear-quadratic OCPs~\cite{zhao2025overlappingschwarzschemelinearquadratic}. These results explain why overlap can improve convergence, but still face a core computational trade-off: larger overlaps improve coordination while increasing the dimensions and cost of the local subproblems.

To resolve the expensive subproblem solves, \cite{naFOTD} introduced the fast overlapping temporal decomposition (FOTD) which uses sequential quadratic programming (SQP) for each subproblem. The idea is to replace the nonlinear subproblem with a single Newton step, and then coordinate after each small step using an exact augmented Lagrangian. This replaces solving each nonlinear subproblem to convergence with solving a linear system at each outer iteration. 
Additionally, it was shown that under this design, uniform stagewise local linear convergence is maintained.

This study builds on this line of research by addressing two core limitations of the previous design. First, in all previous work, the overlap size for the temporal decomposition must be specified a priori by the user. This results in either (1) a conservative overlap size that forces the algorithm to do additional work or (2) an overlap size that is too small, leading to the failure of the resulting optimization scheme (see ~\cite[Figure~2]{zhao2025overlappingschwarzschemelinearquadratic}). Second, solving every local KKT system to high accuracy can be wasteful when the resulting direction is still limited by the temporal decomposition. For example, for traditional centralized solvers, there has been significant development in designing inexact SQP methods that show that accurate outer iterations need not require exact linear solves at every step~\cite{doi:10.1137/0917003,doi:10.1137/060674004,doi:10.1137/130918320,doi:10.1137/20M1354556,pmlr-v202-hong23b}.

Given these limitations, we introduce \emph{Adaptive Overlapping Temporal Decomposition (AOTD)}, which adapts each subproblem's overlap size and solve accuracy through an adaptive residual condition. The core algorithm is as follows. At each iteration, AOTD partitions the horizon into nonlinear subproblems with individually specified overlap sizes. For each problem, we then compute one SQP direction by solving its local KKT system \emph{inexactly} to satisfy an adaptive residual condition in parallel. The local directions are then concatenated over the original, non-overlapping intervals to form a full-horizon primal--dual direction.

The concatenated direction is then checked against two conditions that control the errors from both the temporal decomposition and inexact local KKT solves. The first condition is an adaptive residual that controls the discrepancy between the concatenated direction and the exact Newton direction of the full problem. AOTD adjusts the overlap size and local solve accuracy to satisfy this condition relying on the fact that the decomposition and local accuracy errors can be explicitly decoupled as shown in Lemma~\ref{lem:global-residual-error}. Second, we enforce a descent condition so that the resulting direction is a descent direction for an exact augmented Lagrangian. Once both conditions are satisfied, AOTD uses the accepted direction to update the global primal--dual iterate and repeats.

We establish convergence guarantees for AOTD and demonstrate its computational efficiency in numerical experiments. Theoretically, we prove the resulting design is well-posed in the sense that, in finite iterations, all adaptive overlaps and accuracy conditions stabilize (Corollary \ref{corr:achievability} and Lemma \ref{lem:stability}) and  KKT residual tends to zero (Theorem \ref{thm:global}). Moreover, we establish local linear convergence (Theorem \ref{thm:local-linear}) for sequences that converge toward strict local minimizers. Empirically, we show that, using a fixed parameter setting on both a frequency regulation problem and a Burgers control problem, AOTD converges across the tested initial overlap sizes and reduces estimated FLOPs by approximately a factor of $2.5$ relative to the best~fixed-overlap variants discussed above.

\emph{Contributions.} Thus, the contributions of this work can be briefly summarized as follows:
\begin{enumerate}
\item \emph{Adaptive algorithm design.} We develop AOTD, a parallel SQP method that jointly adapts the temporal overlap and accuracy of local KKT solves. Specifically, we design a residual condition and a descent condition that links these choices to the quality of the full-horizon directions enabling local solves with independent, adaptive overlap sizes to terminate inexactly.

\item \emph{Well-posedness and finite stability of the parameters.} We prove that the adaptive design is well-posed ensuring that all conditions are attained in finite iterations (Corollary \ref{corr:achievability}) and that all adaptive parameters in both the overlap and residual conditions eventually stabilize (Lemma~\ref{lem:stability}).

\item \emph{Global and local linear convergence guarantees.} We establish that the KKT residual tends to zero without requiring initialization near a solution (Theorem \ref{thm:global}). For sequences converging to a strict local minimizer, we then establish a local linear convergence rate (Theorem \ref{thm:local-linear}) ensuring fast convergence.

\item \emph{Computational efficiency.} We evaluate AOTD on a power-system frequency regulation OCP and a Burgers PDE control benchmark. The experiments demonstrate convergence across multiple horizon lengths using a single parameter setting and a reduction in estimated floating-point operations by a factor of approximately $2.5$ relative to the best fixed-overlap variant tested.
\end{enumerate}

\emph{Paper organization.} The remainder of the paper is organized as follows. Section \ref{sec:preliminaries} reviews the SQP framework and overlapping temporal decomposition (OTD) for completeness. Section \ref{sec:main-algorithm-design} presents the proposed AOTD algorithm and design intuition. Sections \ref{sec:global-convergence} and \ref{sec:local-convergence} establish global convergence and local linear convergence, respectively. Section \ref{sec:simulations} demonstrates the effectiveness of our approach on two benchmark nonlinear OCPs, and Section \ref{sec:conclusions} concludes the paper.

\emph{Notation.} All vectors are column vectors, and $(\bm{x};\bm{y})$ denotes vertical concatenation. The norm $\|\cdot\|$ is Euclidean for vectors and spectral for matrices. For a symmetric matrix $A$, $\lambda_{\min}(A)$ and $\lambda_{\max}(A)$ denote its smallest and largest eigenvalues, respectively. For a general matrix $A$, $\sigma_{\min}(A)$ and $\sigma_{\max}(A)$ denote its smallest and largest singular values, respectively. The symbols $\mathbb{I}$ and $\bm{0}$ denote identity and zero objects of appropriate dimension. For integers $a\leq b$, $[a,b]\coloneqq\{a,\ldots,b\}$, $[a,b)\coloneqq\{a,\ldots,b-1\}$, and $[n]\coloneqq[0,n]$. For any sequence of state iterates, we use $\bm{x}\coloneqq\bm{x}_{0:N}=(\bm{x}_0;\ldots;\bm{x}_N)\in\mathbb{R}^{(N+1)n_x}$ as shorthand. The same applies for control iterates $\bm{u}$ or dual iterates $\bm{\lambda}$. We define the primal iterates as $\bm{z}_k=(\bm{x}_k;\bm{u}_k)$ for $k<N$ and $\bm{z}_N=\bm{x}_N$ which gives $\bm{z}\coloneqq(\bm{z}_0;\ldots;\bm{z}_N)\in\mathbb{R}^{n_z}$, where $n_z=(N+1)n_x+Nn_u$. Throughout, we use subscripts to index horizon stages and superscripts to index optimization iterations. A tilde marks quantities produced by the decomposition (local or composed); Newton quantities without a tilde refer to the corresponding exact full-horizon quantities.

\section{Preliminaries} \label{sec:preliminaries}
\subsection{Sequential Quadratic Programming}
We briefly review the sequential quadratic linearization of \eqref{eq:main-problem}. To begin, \eqref{eq:main-problem} can be given compactly as
\begin{align}\label{eq:main-problem-compact-form}
\min_{\bm z}\ g(\bm z)\qquad\text{s.t.}\quad f(\bm z)=0,
\end{align}
where $f(\bm z)\in\mathbb R^{n_x(N+1)}$ and
\newpage 
\begin{align}
g(\bm z)&=\sum_{k=0}^{N-1}g_k(\bm x_k,\bm u_k)+g_N(\bm x_N), \notag\\
f(\bm z)&=\begin{pmatrix}
\bm x_0-\bar{\bm x}_0\\
\bm x_1-f_0(\bm x_0,\bm u_0)\\
\vdots\\
\bm x_N-f_{N-1}(\bm x_{N-1},\bm u_{N-1})
\end{pmatrix}.
\notag 
\end{align}
The standard Lagrangian is $\mathcal L(\bm z,\bm\lambda)=g(\bm z)+\bm\lambda^Tf(\bm z)$, with equality multipliers $\bm\lambda\in\mathbb R^{n_x(N+1)}$. At iteration $\tau$, SQP updates the primal--dual iterate by
\begin{align*}
\begin{pmatrix}\bm z^{\tau+1}\\\bm\lambda^{\tau+1}\end{pmatrix}
=\begin{pmatrix}\bm z^\tau\\\bm\lambda^\tau\end{pmatrix}
+\alpha^\tau\begin{pmatrix}\Delta\bm z^\tau\\\Delta\bm\lambda^\tau\end{pmatrix}\,, 
\end{align*}
where the Newton direction solves
\begin{align}\label{eq:newtonSystem}
\begin{pmatrix}\hat H^\tau&(G^\tau)^T\\G^\tau&\bm0\end{pmatrix}
\begin{pmatrix}\Delta\bm z^\tau\\\Delta\bm\lambda^\tau\end{pmatrix}
=-\begin{pmatrix}\nabla_{\bm z}\mathcal L^\tau\\\nabla_{\bm\lambda}\mathcal L^\tau\end{pmatrix}\,,
\end{align}
and $\alpha^\tau$ is the step-size to be specified in Section \ref{sec:main-algorithm-design}.

Here, $H^\tau=\nabla_{\bm z}^2\mathcal L(\bm z^\tau,\bm\lambda^\tau)$, $G^\tau=\nabla_{\bm z}f(\bm z^\tau)$, and $\hat H^\tau$ is the Hessian modification that preserves its block-diagonal structure while ensuring positive definiteness in the tangent space of constraints (see \eqref{nsequ:1}) \cite{nocedalAndWright}. The superscript $\tau$ on Lagrangian derivatives denotes evaluation at $(\bm z^\tau,\bm\lambda^\tau)$. Notice, by the OCP structure, we~have~that~the Hessian diagonalizes as 
\begin{align*}
H^\tau=\diag(H_0^\tau,\ldots,H_N^\tau),\qquad
\hat H^\tau=\diag(\hat H_0^\tau,\ldots,\hat H_N^\tau).
\end{align*}
For $k\in[N-1]$, the blocks are
\begin{align*}
H_k^\tau=\begin{pmatrix}Q_k^\tau&(S_k^\tau)^T\\S_k^\tau&R_k^\tau\end{pmatrix}
=\begin{pmatrix}
\nabla_{\bm x_k}^2\mathcal L^\tau&\nabla_{\bm x_k\bm u_k}\mathcal L^\tau\\
\nabla_{\bm u_k\bm x_k}\mathcal L^\tau&\nabla_{\bm u_k}^2\mathcal L^\tau
\end{pmatrix},
\end{align*}
with stage dependence $(\bm z_k^\tau,\bm\lambda_{k+1}^\tau)$ and terminal block $H_N^\tau=\nabla_{\bm x_N}^2\mathcal L(\bm z_N^\tau)$. Each $\hat H_k^\tau$ has the corresponding modified blocks $\hat Q_k^\tau,\hat S_k^\tau,\hat R_k^\tau$ preserving this structure. Defining $A_k:=\nabla_{\bm x_k}f_k(\bm x_k,\bm u_k)$ and $B_k:=\nabla_{\bm u_k}f_k(\bm x_k,\bm u_k)$, the Jacobian $G^\tau=G(\bm z^\tau)$ has an initial identity block and dynamics block rows $(-A_k,-B_k,\mathbb I)$ in the columns of $(\bm x_k,\bm u_k,\bm x_{k+1})$, with zeros elsewhere and all derivatives evaluated at $\bm z^\tau$.

\subsection{Overlapping Temporal Decomposition (OTD)}

\begin{figure}[tp]
\centering
\begin{tikzpicture}[
x=0.70cm,y=0.8cm,
>=Latex,
font=\normalsize,
horizon/.style={black, thick},
knot/.style={red!75!black, very thick},
extbox/.style={draw=black, fill=blue!12, thick, rounded corners=2pt},
keepbox/.style={draw=black, fill=green!20, thick, rounded corners=2pt},
dim/.style={blue!70!black, thick, |-|},
lab/.style={font=\normalsize},
biglab/.style={font=\normalsize}
]

\draw[knot] (2.5,1.0) -- (2.5,4.9);
\draw[knot] (5.0,1.0) -- (5.0,3.35)
(5.0,3.65) -- (5.0,4.9);
\draw[knot] (7.5,1.0) -- (7.5,1.8)
(7.5,2.1) -- (7.5,3.35)
(7.5,3.65) -- (7.5,5.2);
\draw[knot] (10.0,1.0) -- (10.0,1.8)
(10.0,2.1) -- (10.0,5.2);

\node[anchor=east] at (0.5,4.55) {$\mathcal{P}_\mu^{i-1}$};
\node[anchor=east] at (0.50,3.00) {$\mathcal{P}_\mu^{i}$};
\node[anchor=east] at (0.50,1.45) {$\mathcal{P}_\mu^{i+1}$};

\draw[->, thick] (0.55,4.55) -- (1.25,4.55);
\draw[->, thick] (0.55,3.00) -- (1.25,3.00);
\draw[->, thick] (0.55,1.45) -- (1.25,1.45);

\draw[extbox]  (1.5,4.3) rectangle (6.0,4.8);
\draw[keepbox] (2.5,4.3) rectangle (5.0,4.8);
\node[lab] at (3.75,4.55) {local solve};

\draw[dim] (1.5,5.05) -- (2.5,5.05);
\node[blue!70!black, above] at (2.0,5.05) {$b$};
\draw[dim] (5.0,5.05) -- (6.0,5.05);
\node[blue!70!black, above] at (5.5,5.05) {$b$};

\draw[extbox]  (4.0,2.75) rectangle (8.5,3.25);
\draw[keepbox] (5.0,2.75) rectangle (7.5,3.25);
\node[lab] at (6.25,3.00) {local solve};

\draw[dim] (4.0,3.50) -- (5.0,3.50);
\node[blue!70!black, above] at (4.5,3.50) {$b$};
\draw[dim] (7.5,3.50) -- (8.5,3.50);
\node[blue!70!black, above] at (8.0,3.50) {$b$};

\draw[extbox]  (6.5,1.2) rectangle (11.0,1.7);
\draw[keepbox] (7.5,1.2) rectangle (10.0,1.7);
\node[lab] at (8.75,1.45) {local solve};

\draw[dim] (6.5,1.95) -- (7.5,1.95);
\node[blue!70!black, above] at (7.0,1.95) {$b$};
\draw[dim] (10.0,1.95) -- (11.0,1.95);
\node[blue!70!black, above] at (10.5,1.95) {$b$};

\node at (5.5,6.1) {Solve local subproblems in parallel};

\node[biglab] at (6.15,-0.75) {Compose only the exclusive pieces};

\draw[horizon] (2.5,-0.1) -- (10.0,-0.1);
\draw[keepbox] (2.5,-0.28) rectangle (5.0,0.08);
\draw[keepbox] (5.0,-0.28) rectangle (7.5,0.08);
\draw[keepbox] (7.5,-0.28) rectangle (10.0,0.08);

\draw[->] (3.75,4.3) -- (3.75,0.1);
\draw[->] (6.25,2.75) -- (6.25,0.1);
\draw[->] (8.75,1.2) -- (8.75,0.1);

\end{tikzpicture}
\caption{Demonstration of overlapping temporal decomposition structure. Every subproblem is augmented with an overlap size of $b$, solved in parallel, and then the solution iterate is composed by removing the overlaps of each problem. }
\label{fig:otdScheme}
\end{figure}

We now introduce the formalization of the OTD paradigm, detailed in Figure \ref{fig:otdScheme}. Let $M\in\{1,\ldots,N\}$ be the number of temporal subproblems and $0=n_0<\cdots<n_M=N$ the partition knots. 
The disjoint cores~are
\begin{align*}
\mathcal K_i=[n_i,n_{i+1})\ (i<M-1),\qquad \mathcal K_{M-1}=[n_{M-1},N].
\end{align*}
For each $i\in[M-1]$, a nominal overlap $b_i\in\mathbb Z_{\geq1}$ defines the extended window by
\begin{align}\label{eq:extended-intervals}
m_1^i=\max\{n_i-b_i,0\},\enspace m_2^i=\min\{n_{i+1}+b_i,N\}\,,
\end{align}
where we cap the overlaps at the endpoints such that
\begin{align}\label{eq:effective-overlaps}
b_i^L:=\min\{b_i,n_i\},\qquad b_i^R:=\min\{b_i,N-n_{i+1}\}\,. 
\end{align}
Thus, $m_1^i=n_i-b_i^L$ and $m_2^i=n_{i+1}+b_i^R$. On each window, define the nonlinear local OCP $\mathcal P_\mu^i(\bm d_i)$ by
\begin{subequations}\label{eq:decomposed-main-problem}
\begin{align}
\min_{\tilde{\bm{x}},\,\tilde{\bm{u}}}\;
& \sum\nolimits_{k=m_1^i}^{m_2^i-1} g_k(\tilde{\bm{x}}_k, \tilde{\bm{u}}_k)
+ \tilde{g}_{m_2^i}(\tilde{\bm{x}}_{m_2^i}; \bm{d}_{i,2:4}), \label{eq:dec-obj}\\
\text{s.t.}\;\;
& \tilde{\bm{x}}_{k+1} = f_k(\tilde{\bm{x}}_k, \tilde{\bm{u}}_k),
\; k \in [m_1^i, m_2^i), \label{eq:dec-dyn}\\
& \tilde{\bm{x}}_{m_1^i} = \bm{d}_{i,1}, \label{eq:dec-init}
\end{align}
\end{subequations}
with boundary input $\bm d_i=(\bar{\bm x}_{m_1^i};\bar{\bm x}_{m_2^i};\bar{\bm u}_{m_2^i};\bar{\bm\lambda}_{m_2^i+1})$. For $m_2^i<N$ and $\mu > 0$, we use the regularized terminal cost introduced in \cite{naFOTD} as
\begin{multline*}
\tilde g_{m_2^i}(\tilde{\bm x}_{m_2^i};\bm d_{i,2:4}) =g_{m_2^i}(\tilde{\bm x}_{m_2^i},\bar{\bm u}_{m_2^i})\\
\hskip2cm -\bar{\bm\lambda}_{m_2^i+1}^Tf_{m_2^i}(\tilde{\bm x}_{m_2^i},\bar{\bm u}_{m_2^i})+\frac\mu2\|\tilde{\bm x}_{m_2^i}-\bar{\bm x}_{m_2^i}\|^2\,, 
\end{multline*}
and for $m_2^i=N$, it is $\tilde g_N(\tilde{\bm x}_N)=g_N(\tilde{\bm x}_N)$ where only $\bm d_{i,1}=\bar{\bm x}_{m_1^i}$ is needed. Notice, for each subproblem, the left boundary is an equality constraint, while $\bm d_{i,2:4}$ enters the terminal cost. The reason for this design is that \cite[Theorem 2.1]{naFOTD} shows that any KKT point of the full problem \eqref{eq:main-problem-compact-form} is a KKT point of all the subproblems in \eqref{eq:decomposed-main-problem} when $\bm d_i$ is set from that KKT point for each $i$; moreover, any KKT point of all the subproblems \emph{whose boundaries agree} is a KKT point of \eqref{eq:main-problem-compact-form}. Now, given that $\bm{d}_i$, the boundary conditions, are given for every subproblem, \eqref{eq:decomposed-main-problem} can be solved in parallel.

Lastly, as illustrated in Figure \ref{fig:otdScheme}, we define the composition to keep only the interior of each subproblem. Given local solutions $\{(\tilde{\bm z}_i,\tilde{\bm\lambda}_i)\}_{i=0}^{M-1}$, the composition operator $\mathcal{C}$ yields
\begin{align*}
\mathcal{C}(\{(\tilde{\bm{z}}_i, \tilde{\bm{\lambda}}_i)\}_i) = (\bm{z}, \bm{\lambda})\,, 
\end{align*}
where $(\bm{z}_k, \bm{\lambda}_k) = (\tilde{\bm{z}}_{i, k}, \tilde{\bm{\lambda}}_{i, k})$ if $k \in \mathcal K_i$ for $i\in [M-1]$. 

Analogously, for the full-horizon variables $(\bm{z}, \bm{\lambda})$, we define the \textit{decomposition operator} $\mathcal{D}$ as  
\begin{align*}
\mathcal{D}(\bm{z}, \bm{\lambda}) = \big\{\mathcal{D}_i(\bm{z}, \bm{\lambda})\big\}_{i=0}^{M-1}\,, 
\end{align*}
where each component $\mathcal{D}_i(\bm{z}, \bm{\lambda}) = (\tilde{\bm{z}}_i, \tilde{\bm{\lambda}}_i)$ is given by $\tilde{\bm{z}}_i = (\tilde{\bm{x}}_i, \tilde{\bm{u}}_i) = (\bm{x}_{m_1^i:m_2^i}, \bm{u}_{m_1^i:m_2^i-1})$ and the dual variables are given by $ \tilde{\bm{\lambda}}_i = \bm{\lambda}_{m_1^i:m_2^i}.$

\section{Main Algorithm} \label{sec:main-algorithm-design}

\subsection{Adaptive Overlapping Temporal Decomposition}
AOTD combines the SQP framework and temporal decomposition of Section~\ref{sec:preliminaries} through two adaptive mechanisms. First, for every OCP in \eqref{eq:decomposed-main-problem}, we apply SQP locally where the resulting Newton direction is solved approximately to a tolerance $\epsilon_i^\tau$ as specified in Section~\ref{sec:localInexactness}.

Second, each subproblem introduces two distinct sources of error: one arising from the domain decomposition and another from the inexact solution of the Newton direction. To control their combined impact on the global solution, we scale the overlap size of each subproblem adaptively with its contribution to the global residual. On a failed global accuracy test, every subproblem is updated, with larger residual shares producing stricter tolerance reductions and larger overlap increments.

At iteration $\tau$, let $\epsilon^\tau>0$ denote the global accuracy tolerance, $\epsilon_i^\tau>0$ the local solve tolerances, and $\eta_1^\tau,\eta_2^\tau>0$ the penalty parameters of the augmented Lagrangian in \eqref{eq:augmentedLagrangian}. Then, each algorithm iteration can be summarized in four steps:

\noindent \underline{Step 1:} \textbf{Iteration setup.} Compute the Hessian, modified Hessian, and Jacobian, and set the adaptive parameters $\eta_1^\tau$, $\eta_2^\tau$, $\epsilon^\tau$, $\epsilon_i^\tau$, and $b_i^\tau$ to their initial values at $\tau=0$ or carry them over from the preceding iteration.

\noindent \underline{Step 2:} \textbf{Local quadratic and KKT solves (in parallel).} Form each local quadratic subproblem approximation of \eqref{eq:decomposed-main-problem} with overlap size $b_i^\tau$,  solve its local KKT linear system approximately to tolerance $\epsilon_i^\tau$, with boundary conditions $\bm{d}_i^\tau = (\bm{0}; \bm{0}; \bm{0}; \bm{0})$ (any subproblem with $m_1^i = 0$ uses the correcting initial-state $\bm{d}_{i,1}^\tau = \bar{\bm{x}}_0 - \bm{x}_0^\tau$).

\noindent \underline{Step 3:} \textbf{Acceptance and adaptive updates.} Compose the local primal--dual directions, then evaluate the global accuracy and descent conditions:

\begin{enumerate}[label=(\alph*), leftmargin=3em, itemsep=0.5ex]

\item If the accuracy condition fails, update each $\epsilon_i^\tau$ and $b_i^\tau$ according to the corresponding subproblem's contribution to the global residual, then return to Step~2.
\newpage 

\item If the accuracy condition holds but the descent condition fails, update $\epsilon^\tau$ and the augmented-Lagrangian penalty parameters $\eta_1^\tau$ and $\eta_2^\tau$, then return to Step~2.

\item If both conditions hold, accept the approximate Newton step.

\end{enumerate}

\noindent \underline{Step 4:} \textbf{Step size selection and iterate update.} Select the step size $\alpha^\tau$ via a line search using the augmented Lagrangian \eqref{eq:augmentedLagrangian} as the merit function and then update the iterate.

We now formalize the design of each step. 

\subsection{Local Quadratic Subproblems and KKT Systems}\label{sec:local-quadratic-subproblems}

In Step~2, AOTD forms the local SQP subproblem of \eqref{eq:decomposed-main-problem} on each adaptive window and approximately solves its KKT system. All coefficients are evaluated at $(\bm z^\tau,\bm\lambda^\tau)$. Suppressing $i$ on the endpoints and local coefficients, the quadratic subproblem $\mathcal{LP}_\mu^i(\bm d_i)$ is
\begin{align}
\min\quad&\sum_{k=m_1}^{m_2-1}\tilde g_k(\bm p_k,\bm q_k)+\tilde g_{m_2}(\bm p_{m_2};\bm d_{2:4}), \notag\\
\text{s.t.}\quad&\bm p_{m_1}=\bm d_1, \notag\\
&\bm p_{k+1}=\tilde f_k(\bm p_k,\bm q_k),\quad k\in[m_1,m_2)\,, 
\notag 
\end{align}
where, for $k\in [m_1, m_2)$, the stage cost and dynamics are
\begin{align*}
\tilde g_k(\bm p_k,\bm q_k)
&=\frac12\begin{bmatrix}\bm p_k\\\bm q_k\end{bmatrix}^T
\begin{bmatrix}\hat Q_k&\hat S_k^T\\\hat S_k&\hat R_k\end{bmatrix}
\begin{bmatrix}\bm p_k\\\bm q_k\end{bmatrix}\\
&\quad+\begin{bmatrix}\nabla_{\bm x_k}\mathcal L^\tau\\\nabla_{\bm u_k}\mathcal L^\tau\end{bmatrix}^T
\begin{bmatrix}\bm p_k\\\bm q_k\end{bmatrix},\\
\tilde f_k(\bm p_k,\bm q_k)&=A_k^\tau\bm p_k+B_k^\tau\bm q_k-\nabla_{\bm\lambda_{k+1}}\mathcal L^\tau\,, 
\end{align*}
where recall $A_k^\tau:=\nabla_{\bm x_k}f_k(\bm x_k^\tau,\bm u_k^\tau)$ and $B_k^\tau:=\nabla_{\bm u_k}f_k(\bm x_k^\tau,\bm u_k^\tau)$. For $m_2<N$, the terminal cost is
\begin{align*}
\tilde g_{m_2}(\bm p_{m_2};\bm d_{2:4})
&=\frac12\begin{bmatrix}\bm p_{m_2}\\\bm d_3\end{bmatrix}^T
\begin{bmatrix}\hat Q_{m_2}&\hat S_{m_2}^T\\\hat S_{m_2}&\hat R_{m_2}\end{bmatrix}
\begin{bmatrix}\bm p_{m_2}\\\bm d_3\end{bmatrix}\\*
&\quad+\nabla_{\bm x_{m_2}}^T\mathcal L^\tau\bm p_{m_2}-\bm d_4^TA_{m_2}^\tau\bm p_{m_2}\\*
&\quad+\frac\mu2\|\bm p_{m_2}-\bm d_2\|^2\,,
\end{align*}
and, for $m_2=N$, the boundary arguments are omitted and the terminal cost is
\begin{align*}
\tilde g_N(\bm p_N)=\tfrac12\bm p_N^T\hat Q_N\bm p_N+\nabla_{\bm x_N}^T\mathcal L^\tau\bm p_N.
\end{align*}

The local primal directions $\bm p_k=\Delta\tilde{\bm x}_k$, $\bm q_k=\Delta\tilde{\bm u}_k$ and QP multipliers $\bm\zeta_k$ are stacked as $\Delta\tilde{\bm\omega}(\bm d_i)=(\bm p_{m_1},\allowbreak\bm q_{m_1},\allowbreak\ldots,\allowbreak\bm p_{m_2-1},\allowbreak\bm q_{m_2-1},\allowbreak\bm p_{m_2})$ and $\Delta\tilde{\bm\zeta}(\bm d_i)=(\bm\zeta_{m_1},\ldots,\bm\zeta_{m_2})$. Because the linear term in the local quadratic objective uses $\nabla\mathcal L^\tau$, its multiplier $\bm\zeta_k$ represents the local dual increment. Thus $\Delta\tilde{\bm\lambda}_k=\bm\zeta_k$, and the local primal--dual direction is $(\Delta\tilde{\bm\omega},\Delta\tilde{\bm\zeta})$. The QP Lagrangian is
\begin{align}
\nonumber &\mathcal L_{QP}^{(i)}(\Delta\tilde{\bm\omega}(\bm d_i),\Delta\tilde{\bm\zeta}(\bm d_i);\bm d_i)\\
\nonumber&\quad=\sum_{k=m_1}^{m_2-1}\tilde g_k(\bm p_k,\bm q_k)+\tilde g_{m_2}(\bm p_{m_2};\bm d_{2:4})\\
&\qquad+\sum_{k=m_1}^{m_2-1}\bm\zeta_{k+1}^T\bigl(\bm p_{k+1}-\tilde f_k(\bm p_k,\bm q_k)\bigr) +\bm\zeta_{m_1}^T(\bm p_{m_1}-\bm d_1). \notag
\end{align}

The exact Newton direction $(\Delta\tilde{\bm\omega}^\ast(\bm d_i),\Delta\tilde{\bm\zeta}^\ast(\bm d_i))$ solves
\begin{align}\label{eq:newton}
\underbrace{\begin{pmatrix}\hat H_i^\tau&(G_i^\tau)^T\\G_i^\tau&0\end{pmatrix}}_{\Gamma_i^\tau}
\begin{pmatrix}\Delta\tilde{\bm\omega}^\ast\\\Delta\tilde{\bm\zeta}^\ast\end{pmatrix}
=-\nabla\mathcal L_{\mathrm{QP}}^{(i),\tau},
\end{align}
where the fixed right-hand side is defined by
\begin{align*}
\nabla\mathcal L_{\mathrm{QP}}^{(i),\tau}:=
\left.\begin{pmatrix}\nabla_{\bm\omega}\mathcal L_{\mathrm{QP}}^{(i)}\\\nabla_{\bm\zeta}\mathcal L_{\mathrm{QP}}^{(i)}\end{pmatrix}
\right|_{(\Delta\tilde{\bm\omega},\Delta\tilde{\bm\zeta})=(\bm0,\bm0)}.
\end{align*}
The local modified Hessian is 
\begin{align*}
\hat H_i^\tau
&= \diag\!\left(\hat H_{m_1}^\tau,\ldots,\hat H_{m_2-1}^\tau,
\hat H_{i,\mathrm T}^\tau\right), \\
\hat H_{i,\mathrm T}^\tau
&\coloneqq
\begin{cases}
\hat Q_{m_2}+\mu\mathbb I, & m_2<N,\\
\hat Q_N, & m_2=N.
\end{cases}
\end{align*}
Similar to the global design, the local Jacobian $G_i$ has an identity block for $\bm p_{m_1}=\bm d_1$ and successive block rows $(-A_k,-B_k,\mathbb I)$ in the columns of $(\bm p_k,\bm q_k,\bm p_{k+1})$ for $k\in[m_1,m_2)$, with zeros elsewhere. We now specify the residual tolerance $\epsilon_i^\tau$ for the inexact solution of~\eqref{eq:newton}.

\subsection{Local KKT-System Inexactness} \label{sec:localInexactness}
We begin by defining the residual condition and corresponding error for each local KKT linear system. In particular, consider the approximate local primal--dual direction $(\Delta \bm{\tilde{\omega}}_i, \Delta \bm{\tilde{\zeta}}_i)$. Its residual in \eqref{eq:newton} is
\begin{align}
\bm{\tilde{r}}_i(\Delta \bm{\tilde{\omega}}_i, \Delta \bm{\tilde{\zeta}}_i) \coloneq \Gamma_i^\tau \begin{pmatrix}
\Delta \bm{\tilde{\omega}}_i \\ \Delta \bm{\tilde{\zeta}}_i
\end{pmatrix} + \nabla \mathcal{L}_{\mathrm{QP}}^{(i), \tau}, \label{eq:local-residual-def}
\end{align}
where $\nabla \mathcal{L}_{\mathrm{QP}}^{(i), \tau} = \bm{\tilde{r}}_i(\bm{0}, \bm{0})$ is the fixed local KKT right-hand side defined in \eqref{eq:newton}, which is held constant throughout the local solve.

Note that this residual requires only the local quadratic-subproblem matrices and Lagrangian. We then require each local KKT residual to satisfy
\begin{align} \label{eq:local-residual-cond}
\|\bm{\tilde{r}}_i(\Delta \bm{\tilde{\omega}}_i, \Delta \bm{\tilde{\zeta}}_i)\| &\leq \epsilon_i^\tau \|\nabla \mathcal{L}_{\mathrm{QP}}^{(i), \tau} \| \,,
\end{align}
where $\epsilon_i^\tau$ is determined via the local subproblem's contribution to the global accuracy (see \eqref{eq:subproblem-error-update}). 


Throughout, we assume that every local solver used in Algorithm~\ref{alg:aotd-compact} can satisfy \eqref{eq:local-residual-cond} for any prescribed $\epsilon_i^\tau>0$ in finitely many iterations, although its residuals need not decrease monotonically. For randomized solvers, we assume that this property holds almost surely.

\subsection{Global Problem Inexactness}
\subsubsection{Merit Function}
For positive penalty parameters $\bm\eta=(\eta_1,\eta_2)$, we use the differentiable exact augmented Lagrangian
\begin{align}\label{eq:augmentedLagrangian}
\nonumber  \mathcal L_{\bm\eta}(\bm z,\bm\lambda)
&=\mathcal L(\bm z,\bm\lambda)+\tfrac{\eta_1}{2}\|\nabla_{\bm\lambda}\mathcal L(\bm z,\bm\lambda)\|^2\\
&\quad+\tfrac{\eta_2}{2}\|\nabla_{\bm z}\mathcal L(\bm z,\bm\lambda)\|^2.
\end{align}
Here $\eta_1$ weights the constraint violation and $\eta_2$ weights stationarity. Every KKT point is stationary for this function, with converse results under suitable regularity and penalty choices~\cite{doi:10.1137/0317044,alBook}. We use it as a merit function, without requiring KKT points to be minimizers. Throughout, we use the short hand notation to represent this augmented Lagrangian $\mathcal L_{\bm\eta}^\tau=\mathcal L_{\bm\eta}(\bm z^\tau,\bm\lambda^\tau)$. 

\subsubsection{Global Concatenation Accuracy Condition}\label{sec:global-concatenation-accuracy}
We now define a global accuracy condition that controls the discrepancy between the concatenated direction and the exact Newton direction of the full problem. Let $(\Delta \tilde{\bm{z}}, \Delta \tilde{\bm{\lambda}}) =\mathcal{C}(\{(\Delta \bm{\tilde{\omega}}, \Delta \bm{\tilde{\zeta}})\}_i)$ be the concatenated direction generated by solving each subproblem to $\epsilon_i^\tau$ tolerance. Then, we define the global residual analogous to the local subproblem residual in Section \ref{sec:localInexactness}
\begin{align*}
\bm{r}(\Delta \tilde{\bm{z}}, \Delta \tilde{\bm{\lambda}}) =  \underbrace{\begin{pmatrix}
\hat{H}^{\tau} & (G^\tau)^T \\ G^\tau & 0 
\end{pmatrix}}_{\Gamma^\tau} \begin{pmatrix}
\Delta \tilde{\bm{z}} \\ \Delta \tilde{\bm{\lambda}}
\end{pmatrix} + \begin{pmatrix}
\nabla_{\bm{z}} \mathcal{L}^\tau \\ \nabla_{\bm{\lambda}} \mathcal{L}^\tau
\end{pmatrix},
\end{align*}
where $\hat{H}^\tau$, $G^\tau$ and $\mathcal{L}^\tau$ are the global modified Hessian, Jacobian, and Lagrangian. The residual condition then becomes 
\begin{subequations}\label{eq:globalResidualBound}
\begin{align}
\|\bm{r}(\Delta \tilde{\bm{z}}, \Delta \tilde{\bm{\lambda}})\| &\leq \frac{\epsilon^\tau \|\nabla\mathcal{L}^\tau\|}{\|\Gamma^\tau\|\Psi^\tau}\,,  \\  
\Psi^\tau &\coloneqq \frac{20 \max(\|\hat{H}^\tau\|^2, 1)}{\min\{\gamma_{\rm RH}, 1\}\min\{(\sigma_{\min}^\tau)^2, 1\}}.
\end{align}
\end{subequations}
Here, $\gamma_{\rm RH}$ is a lower bound on the reduced Hessian (see Assumption~\ref{assumption:uniform-lower-bound-hessian}), $\sigma_{\min}^\tau:=\sigma_{\min}(G^\tau)$, and $\epsilon^\tau$ is an adaptive accuracy parameter satisfying $\epsilon^\tau\leq\epsilon^\tau_{\text{max}}$, with $\epsilon^\tau_{\text{max}}$ and $\Upsilon^\tau$ defined as:
\begin{subequations}\label{eq:globalEpsilonMin}
\begin{align}
\epsilon^\tau_{\text{max}} &\coloneqq \frac{(0.5-\beta)\eta_2^\tau}{(1+\eta_1^\tau + \eta_2^\tau)(\Psi^\tau\Upsilon^\tau)^2}\,,  \\ 
\Upsilon^\tau &\coloneqq \max\{\|G
^\tau \|, \|\hat{H}^\tau\|, 1\}\,. 
\end{align}
\end{subequations}
$\eta_1^\tau, \eta_2^\tau$ are both adaptive penalty parameters as given in the augmented Lagrangian function \eqref{eq:augmentedLagrangian}. The residual condition \eqref{eq:globalResidualBound} controls the error in the composed direction relative to the exact global Newton direction. Equation~\eqref{eq:globalEpsilonMin} caps $\epsilon^\tau$ so that the inexactness is small relative to the curvature of the merit function, which allows $\alpha^\tau=1$ eventually (Lemma~\ref{lem:unit-step-size}). 

\begin{remark} \label{rem:computable-constants}
Exact spectral quantities in \eqref{eq:globalResidualBound}--\eqref{eq:globalEpsilonMin} may be costly to compute, but conservative bounds suffice. Operator-norm upper bounds follow cheaply from block row sums, while a lower bound for $\sigma_{\min}(G^\tau)$ can be certified by standard eigenvalue bisection with inertia counts from block $LDL^T$ factorizations of $G^\tau(G^\tau)^T$ \cite{golubVanLoan}. Moreover, for a prescribed $\gamma_{\rm RH}$, the reduced-Hessian condition can be certified by increasing $c$ and applying parallel Cholesky factorization to $\hat H^\tau+c(G^\tau)^TG^\tau-\gamma_{\rm RH}\mathbb I$ \cite{nocedalAndWright,1039748,doi:10.1137/0912044}.
\end{remark}

We now design the update of the local adaptive parameters $\epsilon_i^\tau$, $b_i^\tau$ based on each subproblem's contribution to the global residual. Since the overlaps adapt, the windows of Section~\ref{sec:preliminaries} and the local systems of Section~\ref{sec:local-quadratic-subproblems} use the current, iteration-dependent endpoints $m_1^i = n_i - b_i^{L,\tau}$ and $m_2^i = n_{i+1} + b_i^{R,\tau}$ (per \eqref{eq:extended-intervals}--\eqref{eq:effective-overlaps} with $b_i \to b_i^\tau$).  Let $\bm r_i$ be the restriction of $\bm r(\Delta\tilde{\bm z},\Delta\tilde{\bm\lambda})$ to the core $\mathcal K_i$ and, for brevity, write $\bm r = \bm r(\Delta\tilde{\bm z},\Delta\tilde{\bm\lambda})$. Then, if the accuracy condition is not satisfied, the individual tolerances and overlaps are updated via
\begin{subequations}
\begin{align}
\epsilon_i^\tau&\gets\epsilon_i^\tau\exp\!\left(-\varrho_\epsilon\left[\frac{\|\bm r_i\|}{\|\bm r\|}+\frac1{\sqrt M}\right]\right),\label{eq:subproblem-error-update}\\
b_i^\tau&\gets b_i^\tau+\max\left\{1,\left\lceil\varrho_b\frac{\|\bm r_i\|}{\|\bm r\|}\right\rceil\right\},\label{eq:subproblem-overlap-update}
\end{align}
\end{subequations}
where $\varrho_\epsilon, \varrho_b > 0$ are the adaptive gains chosen by the user. The nominal overlap $b_i^\tau$ enters the subproblem only through the clipped effective overlaps $b_i^{L,\tau} = \min\{b_i^\tau,\, n_i\}$ and $b_i^{R,\tau} = \min\{b_i^\tau,\, N - n_{i+1}\}$ of \eqref{eq:effective-overlaps}, so each window is capped at the full horizon.

The design of this update law ensures two properties. First, the floor $1/\sqrt M$ in \eqref{eq:subproblem-error-update} and the $\max$ in \eqref{eq:subproblem-overlap-update} guarantee \emph{uniform progress}: every failing pass contracts every tolerance by at least the fixed factor $e^{-\varrho_\epsilon/\sqrt M}$ and increases every nominal overlap by at least one stage, which ensures the accuracy loop terminates and the adaptive parameters stabilize in finitely many updates (see Lemmas~\ref{lem:global-adaptive-well-posed} and \ref{lem:stability}). Second, the updates match the error structure of Lemma \ref{lem:global-residual-error}: the boundary error decays \emph{exponentially} in the overlap, so the overlaps grow additively (the ceiling yields at least one full stage), while the inexactness error is \emph{linear} in the tolerance, so $\epsilon_i^\tau$ contracts multiplicatively by an exponential to keep pace.

\begin{remark} \label{rem:preconditioning}
Because enlarging the overlap changes the local KKT matrix, any preconditioner for solving the Newton direction may need to be updated or rebuilt when the effective window changes. With an appropriate structure-exploiting preconditioner, such as a fixed-fill ILU or block-Jacobi construction, this rebuild can remain inexpensive in practice, but should be considered.
\end{remark}

\subsubsection{Global Descent Direction Condition}
Arbitrary choices of the penalty parameters $\eta_1^\tau, \eta_2^\tau$ need not satisfy the sufficient conditions of Lemma~\ref{lem:wellposed-global-descent}, so accuracy of the concatenated direction alone does not guarantee descent for the exact augmented Lagrangian. Hence, we enforce the  global descent condition:
\begin{align}
(\nabla \mathcal{L}_{\bm{\eta}^\tau}^\tau)^T \begin{pmatrix} \Delta \tilde{\bm{z}}  \\ \Delta \tilde{\bm{\lambda}} \end{pmatrix} \leq -\eta_2^\tau \|\nabla \mathcal{L}^\tau\|^2 / 2\,. \label{eq:descent-direction-condition}
\end{align}
When the global descent direction condition is not satisfied, we update the penalty parameters using a user-chosen factor $\nu>1$:
\begin{align}
\eta_1^\tau \gets \eta_1^\tau \nu^2\,, \quad  
\eta_2^\tau \gets \eta_2^\tau / \nu .
\label{eq:adaptive-parameter-update}
\end{align}
Given the update in the penalty parameters, we recompute $\epsilon^\tau_{\text{max}}$ and update $\epsilon^\tau$ via
\begin{align}
\epsilon^\tau 
\gets 
\min\left\{\epsilon^\tau_{\text{max}}, 
\epsilon^\tau/\nu^4\right\}.
\label{eq:adaptive-epsilon-update}
\end{align}
As above, these update laws make uniform progress. Lemma~\ref{lem:stability} shows that all global and local adaptive parameters stabilize after finitely many updates.

\subsubsection{Line Search and Global Iteration Update}
To complete the update, once the accuracy conditions are satisfied, we choose a step-size $\alpha^\tau$ by computing a backtracking line search to enforce the Armijo condition on the exact augmented Lagrangian
\begin{align}
\mathcal L_{\bm\eta^\tau}\Bigg(\begin{pmatrix}\bm z^\tau\\\bm\lambda^\tau\end{pmatrix}&+\alpha^\tau\begin{pmatrix}\Delta\tilde{\bm z}^\tau\\\Delta\tilde{\bm\lambda}^\tau\end{pmatrix}\Bigg)\notag\\*
&\quad\leq\mathcal L_{\bm\eta^\tau}^\tau+\alpha^\tau\beta(\nabla\mathcal L_{\bm\eta^\tau}^\tau)^T\begin{pmatrix}\Delta\tilde{\bm z}^\tau\\\Delta\tilde{\bm\lambda}^\tau\end{pmatrix},
\label{eq:line-search-cond}
\end{align}
where $\beta \in (0, 0.5)$. In particular, the step size is initialized as $\alpha^\tau \gets 1$ and repeatedly reduced via $\alpha^\tau \gets \delta \alpha^\tau$ with a fixed backtracking factor $\delta \in (0,1)$ until \eqref{eq:line-search-cond} holds. Then the next iteration is computed via
\begin{align}
\begin{pmatrix}
\bm{z}^{\tau+1}\\\bm{\lambda}^{\tau+1} 
\end{pmatrix} = \begin{pmatrix}
\bm{z}^{\tau}\\\bm{\lambda}^{\tau} 
\end{pmatrix} + \alpha^\tau \begin{pmatrix}
\Delta \tilde{\bm{z}}^{\tau}\\ \Delta \tilde{\bm{\lambda}}^{\tau} 
\end{pmatrix} \label{eq:iterate-update}\,. 
\end{align}

\subsection{Full Algorithm and Computational Complexity}

\begin{algorithm}[tp]
\caption[Adaptive Overlapping Temporal Decomposition]{Adaptive Overlapping Temporal\protect\newline Decomposition}
\label{alg:aotd-compact}
\small
\algrenewcommand\algorithmicindent{1em}
\begin{algorithmic}[1]
\Require Initial iterate $(\bm z^0,\bm\lambda^0)$, $\eta_1^0,\eta_2^0,\epsilon^0>0$, local tolerances $\epsilon_i^0>0$ and overlaps $b_i^0\in\mathbb Z_{\geq1}$ for all $i$, $\beta\in(0,0.5)$, $\delta\in(0,1)$, $\varrho_b,\varrho_\epsilon>0$, $\nu>1$, $\mu>0$, stopping tolerance $\mathrm{tol}_{\rm KKT}\geq0$, and the partition knots $\{n_i\}$.
\For{$\tau=0,1,2,\ldots$}
\State Compute $\nabla\mathcal L^\tau$.
\If{$\|\nabla\mathcal L^\tau\|
\leq \mathrm{tol}_{\rm KKT}$}
\State \Return $(\bm z^\tau,\bm\lambda^\tau)$.
\EndIf
\State Compute $\hat H^\tau,G^\tau,\Gamma^\tau,
\Upsilon^\tau,\Psi^\tau$ and
$\epsilon_{\rm max}^\tau$ via
\eqref{eq:globalEpsilonMin}.
\If{$\epsilon^\tau>\epsilon_{\rm max}^\tau$}
\State $\epsilon^\tau\gets\epsilon_{\rm max}^\tau/\nu$.
\EndIf
\State $(\Delta\tilde{\bm z},\Delta\tilde{\bm\lambda})\gets\bm0$.
\While{accuracy \eqref{eq:globalResidualBound} or descent \eqref{eq:descent-direction-condition} fails}
\While{accuracy \eqref{eq:globalResidualBound} fails}
\State \textbf{In parallel, for each subproblem $i$:}
\State \hspace{\algorithmicindent}Set $\bm d_i^\tau=\bm0$ (if $m_1^i=0$, set
$\bm d_{i,1}^\tau=\bar{\bm x}_0-\bm x_0^\tau$).
\State \hspace{\algorithmicindent}Solve local QP with overlap $b_i^\tau$ to tolerance $\epsilon_i^\tau$.
\State Compose $(\Delta\tilde{\bm z},\Delta\tilde{\bm\lambda})\gets
\mathcal C(\{(\Delta\tilde{\bm\omega}_i,
\Delta\tilde{\bm\zeta}_i)\}_i)$.
\If{accuracy \eqref{eq:globalResidualBound} fails}
\State Update $\epsilon_i^\tau,b_i^\tau$ via
\eqref{eq:subproblem-error-update}--\eqref{eq:subproblem-overlap-update}.
\EndIf
\EndWhile
\If{descent \eqref{eq:descent-direction-condition} fails}
\State Update $\eta_1^\tau,\eta_2^\tau,\epsilon^\tau$ via
\eqref{eq:adaptive-parameter-update}, \eqref{eq:adaptive-epsilon-update}.
\EndIf
\EndWhile
\State Choose $\alpha^\tau$ by \eqref{eq:line-search-cond}.
Update the iterate by \eqref{eq:iterate-update}.
\State Carry $\eta_1^\tau,\eta_2^\tau,\epsilon^\tau,
\{\epsilon_i^\tau,b_i^\tau\}_i$ to iteration $\tau+1$.
\EndFor
\end{algorithmic}
\end{algorithm}

The AOTD design, summarized in Algorithm~\ref{alg:aotd-compact}, avoids selecting a sufficiently large overlap in advance and solves every local KKT system to only the required tolerance governed by the global acceptance conditions. The two nested \texttt{while} loops in Algorithm~\ref{alg:aotd-compact}, which coordinate the accuracy and descent tests, take inspiration from the AdaSketch-Newton design of Hong et al.~\cite[Algorithm~1]{pmlr-v202-hong23b}. We next quantify the resulting solve-phase savings by comparing a local subproblem window of length $N_{\rm loc}$ with a fixed-overlap window of length $N_{\rm loc}+\Delta N_{\rm loc}$. 

Consider a local subproblem with horizon length $N_{\rm loc}$, $n_x+n_u$ primal variables per stage, and $n_x$ constraints per stage. The KKT system then has dimension $O(N_{\rm loc}(2n_x+n_u))$ and half-bandwidth $2n_x+n_u$, so a block-banded direct solve costs $O(N_{\rm loc}(2n_x+n_u)^3)$ FLOPs with $O(N_{\rm loc}(2n_x+n_u)^2)$ storage, and each $\Delta N_{\rm loc}$ stages of extra overlap add $O(\Delta N_{\rm loc}(2n_x+n_u)^3)$ FLOPs. However, AOTD does not require a full solve. Hence, a $K$-iteration GMRES solve costs $O(K N_{\rm loc}(2n_x+n_u)^2 + K^2 N_{\rm loc}(2n_x+n_u))$. Thus, if adaptivity saves $\Delta K$ iterations and $\Delta N_{\rm loc}$ overlap stages per subproblem relative to a fixed-overlap solve, the resulting savings are of order $O\bigl((K\Delta N_{\rm loc} + N_{\rm loc}\Delta K)(2n_x+n_u)^2 + (K^2\Delta N_{\rm loc} + N_{\rm loc}K\Delta K)(2n_x+n_u)\bigr)$. As confirmed in Section~\ref{sec:simulations}, accumulated over $M$ subproblems and for the large state and control spaces arising in optimal control of PDEs, these savings become substantial.

Now, given we have detailed the AOTD design, we next establish its theoretical convergence properties. 

\section{Global Convergence} \label{sec:global-convergence}
For all theoretical results in this paper, we provide proofs in Appendices \ref{appendix:proofs1} and \ref{appendix:proofs2}.

\subsection{Well-Posedness of the Local Subproblems}
We begin by introducing the following sufficient assumptions for studying the global convergence of the system.
\begin{assumption}[Uniform lower bound on the reduced Hessian] \label{assumption:uniform-lower-bound-hessian}
For each SQP iteration $\tau \geq 0$, let $Z^\tau$ be a matrix whose columns form an orthonormal basis for the null space of the linearized constraint Jacobian $G^\tau$ (i.e., $\mathrm{range}(Z^\tau) = \mathrm{null}(G^\tau)$ and $(Z^\tau)^T Z^\tau = \mathbb{I}$). We assume that there exists a constant $\gamma_{\rm RH} > 0$, independent of $\tau$, such that
\begin{equation}\label{nsequ:1}
(Z^\tau)^T \hat{H}^\tau Z^\tau \succeq \gamma_{\rm RH} \mathbb{I}.
\end{equation}
\end{assumption}
Whenever the original Hessian satisfies $(Z^\tau)^T H^\tau Z^\tau \succeq \gamma_{\rm RH}\mathbb I$, we retain $\hat H^\tau=H^\tau$. Otherwise, one sufficient modification is the stagewise shift $\hat H_k^\tau=H_k^\tau+(\gamma_{\rm RH}+\|H_k^\tau\|)\mathbb I$ for $k\in[N]$ (see \cite[Chapter 3.4]{nocedalAndWright}). These shifts can be computed independently in parallel and preserve the block-diagonal structure. They ensure positive definiteness on the full space, which is sufficient (and slightly stronger than required) for Assumption~\ref{assumption:uniform-lower-bound-hessian}.

The local convergence results additionally require $\|\hat H^\tau-H^\tau\|\to0$ (Assumption~\ref{assumption:modified-hessian-vanishes}). For a sequence converging to a KKT pair with a positive definite reduced Hessian, choosing $\gamma_{\rm RH}$ strictly below its limiting smallest eigenvalue ensures that the original Hessian satisfies the bound eventually, so the above modification becomes inactive.

\begin{assumption}[Uniform controllability] \label{assumption:uniform-controllability}
For any stage $k \in [N-1]$ and integer $t \in \{1, \dots, N-k\}$, define the controllability matrix by
\begin{align*}
&\Xi_{k,t}(\bm{z}_{k:k+t-1})
\\ &\coloneqq \bigl[
B_{k+t-1},\;
A_{k+t-1}B_{k+t-2},\;\ldots,\;
A_{k+t-1}\cdots A_{k+1}B_k
\bigr]
\\ &\qquad \in \mathbb{R}^{n_x \times t n_u}.
\end{align*}
For $t=1$, this definition reduces to $\Xi_{k,1}=B_k$. For each SQP iteration $\tau \geq 0$, we assume that there exist constants $\gamma_C \in (0,1]$ and $t_C \in \{1,\dots,N\}$, independent of $\tau$, such that for every $k \in [N-t_C]$, there exists an integer $t_k^\tau \in [1,t_C]$ satisfying
\begin{align*}
\Xi_{k,t_k^\tau}^\tau (\Xi_{k,t_k^\tau}^\tau)^T &\succeq \gamma_C \mathbb{I},
\end{align*}
where
\begin{align*}
\Xi_{k,t_k^\tau}^\tau
&\coloneqq
\Xi_{k,t_k^\tau}(\bm{z}_{k:k+t_k^\tau-1}^\tau).
\end{align*}
\end{assumption}

\begin{assumption}[Uniform boundedness] \label{assumption:uniform-boundedness}
For each SQP iteration $\tau \geq 0$, there exists a constant $\Upsilon_{\rm Upper} \geq 2$, independent of $\tau$, such that
\begin{align*}
\max \Bigl\{
\|\hat{H}_k^\tau\|,\,
\|A_k^\tau\|,\,
\|B_k^\tau\|
\Bigr\}
\leq&\, 
\Upsilon_{\rm Upper}\,, \quad k \in [N-1] \\ 
\|\hat H_N^\tau\|\leq&\,  \Upsilon_{\rm Upper}. 
\end{align*}
\end{assumption}
The normalizations $\gamma_C \leq 1$ and $\Upsilon_{\rm Upper} \geq 2$ are without loss of generality, since any valid constants may be replaced by $\min\{\gamma_C,1\}$ and $\max\{\Upsilon_{\rm Upper},2\}$ in the theoretical results that follow. 
\begin{assumption}[Compactness]\label{assumption:compactness}
There exists a compact set $\mathcal Z \times \Lambda$, where $\mathcal Z = \mathcal Z_0 \times \cdots \times \mathcal Z_N$ and $\Lambda = \Lambda_0 \times \cdots \times \Lambda_N$, such that for every iteration $\tau \ge 0$ and every $\alpha \in [0,1]$,
\begin{align*}
(\bm z^\tau + \alpha \Delta \tilde{\bm z}^{\,\tau},\;
\bm  \lambda^\tau + \alpha \Delta \tilde{\bm \lambda}^{\,\tau})
\in \mathcal Z \times \Lambda.
\end{align*}
Moreover, we assume that the functions $\{g_k,g_N,f_k\}$ are three times continuously differentiable and, for every $k\in[N-1]$, satisfy
\begin{align*}
\sup_{\mathcal Z_k \times \Lambda_{k+1}} \|H_k(\bm z_k, \bm \lambda_{k+1})\| \leq&\, \Upsilon_{\rm Upper},
\\ \max\Bigl\{ \sup_{\mathcal Z_k} \|A_k(\bm z_k)\|,\;
\sup_{\mathcal Z_k} \|B_k(\bm z_k)\|
\Bigr\} \le&\, \Upsilon_{\rm Upper}, \\ \nonumber 
\sup_{\mathcal Z_N}\|H_N(\bm z_N)\|\leq&\, \Upsilon_{\rm Upper},
\end{align*}
where, with a slight abuse of notation, $\Upsilon_{\rm Upper} \geq 2$ as in Assumption~\ref{assumption:uniform-boundedness}.
\end{assumption}

We briefly discuss the implications of Assumptions~\ref{assumption:uniform-lower-bound-hessian}--\ref{assumption:compactness}. Assumption~\ref{assumption:uniform-lower-bound-hessian} ensures that the global KKT matrix is invertible, as is standard in SQP analysis (see \cite[Lemma 16.1]{nocedalAndWright}). Assumption~\ref{assumption:uniform-controllability} ensures that the linearized dynamics are controllable within at most $t_C$ stages (see \cite{Keerthi1988,9840913,9954905}). Assumption~\ref{assumption:uniform-boundedness} is needed to bound the global KKT matrix in the analysis. Assumption~\ref{assumption:compactness} provides the compactness and differentiability needed for the augmented Lagrangian merit-function analysis and is standard in SQP designs (e.g., see \cite{bertsekasBook,Na2023,10.1007/s10107-022-01846-z}).

The structural matrix and sensitivity constants below depend only on the displayed assumption-level quantities, compactness, and the user-specified algorithm parameters. These constants are independent of the subproblem index, overlaps, and iteration, and any dependence on $M$ is displayed explicitly. 

Under Assumptions~\ref{assumption:uniform-lower-bound-hessian}--\ref{assumption:uniform-boundedness}, the global SQP subproblem is uniformly well posed, and for sufficiently large $\mu$, the local linear-quadratic subproblems $\{\mathcal{LP}_\mu^i(\bm d_i)\}_{i=0}^{M-1}$ are also uniformly well posed, independently of the SQP iteration and subproblem index. In particular, there is a uniform lower bound on $G^\tau(G^\tau)^T$, and the reduced Hessian of all the subproblems $\{\mathcal{LP}_\mu^i(\bm{d}_i)\}_{i \in [M-1]}$ is lower bounded when $\mu$ is large enough \cite[Lemma 4.1, 4.2]{naFOTD}. Hence, all the subproblems $\mathcal{LP}_\mu^i(\bm{d}_i)$ have unique global solutions, and we additionally have that they satisfy Assumptions~\ref{assumption:uniform-lower-bound-hessian}--\ref{assumption:uniform-boundedness} for any $\bm{d}_i$ with the appropriate change of constants as in \cite[Corollary 4.1]{naFOTD}. Beyond this, Assumption \ref{assumption:compactness} along with Assumptions \ref{assumption:uniform-lower-bound-hessian}--\ref{assumption:uniform-boundedness} guarantees the inverse of the \emph{global} KKT matrix is upper bounded by a constant independent of iteration $\tau$ \cite[Lemma 5.1]{naFOTD}. We will require this to relate the inexact local KKT solutions to the global iterate approximation error. 

\subsection{Global Residual Error and Well-Posedness of the Adaptive Accuracy Conditions}
We begin by first characterizing the error introduced globally by the local approximations.

\begin{lemma}[Global direction-error bound] \label{lem:global-residual-error}
Suppose Assumptions~\ref{assumption:uniform-lower-bound-hessian}--\ref{assumption:compactness} hold, and let $(\Delta \bm z^\tau,\Delta \bm \lambda^\tau)$ denote the exact solution of the global Newton system \eqref{eq:newtonSystem}. Let $(\Delta\tilde{\bm z}^\tau,\Delta\tilde{\bm\lambda}^\tau)$ be the composed direction generated by Algorithm~\ref{alg:aotd-compact}, with every local solve satisfying \eqref{eq:local-residual-cond}. Define
\begin{align*}
\bar{\mu}(\gamma_C, t_C, \Upsilon_{\rm Upper}) := \frac{32 \Upsilon_{\rm Upper}^{4t_C+1}}{\gamma_C} > 0\,.
\end{align*}
For $\mu\geq\bar\mu$, there exist constants $C_1,C_2>0$ and $\rho\in(0,1)$, independent of $i$ and $\tau$, such that
\begin{align}
&\left\|\begin{pmatrix}
\Delta\tilde{\bm z}^\tau-\Delta\bm z^\tau\\
\Delta\tilde{\bm\lambda}^\tau-\Delta\bm\lambda^\tau
\end{pmatrix}\right\|\nonumber\\
& \;\;\leq\!\left[C_1\left(\sum_{i=0}^{M-1}\rho^{2b_i^\tau}\right)^{\frac{1}{2}}
+C_2\left(\sum_{i=0}^{M-1}(\epsilon_i^\tau)^2\right)^{\frac{1}{2}}\right] \left\|\begin{pmatrix}
\Delta\bm z^\tau\\\Delta\bm\lambda^\tau
\end{pmatrix}\right\|,\label{eq:global-inexact-error-bound}
\end{align}
where $b_i^\tau$ is the nominal overlap, and its boundary contribution is omitted whenever subproblem $i$ covers the full horizon.
\end{lemma}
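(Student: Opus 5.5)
Our plan is to split the error on each core $\mathcal K_i$ into a decomposition part and an inexactness part, using the exact local Newton directions as the intermediate object. For each $i$, let $(\Delta\tilde{\bm\omega}_i^\ast,\Delta\tilde{\bm\zeta}_i^\ast)$ solve \eqref{eq:newton} exactly with the boundary data $\bm d_i^\tau$ of Algorithm~\ref{alg:aotd-compact}. Let $(\Delta\tilde{\bm z}^{\ast},\Delta\tilde{\bm\lambda}^{\ast})=\mathcal C(\{(\Delta\tilde{\bm\omega}_i^\ast,\Delta\tilde{\bm\zeta}_i^\ast)\}_i)$ denote the composed exact-local direction. The triangle inequality then gives
\[
\|(\Delta\tilde{\bm z}^\tau-\Delta\bm z^\tau;\Delta\tilde{\bm\lambda}^\tau-\Delta\bm\lambda^\tau)\|\le \|\text{(exact-local composed)}-\text{(global)}\|+\|\text{(inexact composed)}-\text{(exact-local composed)}\|.
\]
Because $\mathcal C$ keeps the disjoint cores, the squared norm of each term is a sum over $i$ of squared norms of restrictions to $\mathcal K_i$. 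This is exactly what produces the $\ell_2$-sums over $i$ in \eqref{eq:global-inexact-error-bound}.

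\textbf{Inexactness term.} By \eqref{eq:local-residual-def}, the local error equals $(\Gamma_i^\tau)^{-1}\tilde{\bm r}_i$. By \eqref{eq:local-residual-cond} its norm is at most $\|(\Gamma_i^\tau)^{-1}\|\,\epsilon_i^\tau\,\|\nabla\mathcal L_{\rm QP}^{(i),\tau}\|$. For $\mu\ge\bar\mu$, the local problems are uniformly well posed (\cite[Lemma 4.1, 4.2, Corollary 4.1]{naFOTD}), so $\|(\Gamma_i^\tau)^{-1}\|$ is bounded uniformly in $i,\tau$. With zero boundary data, the local right-hand side is the restriction of the global right-hand side $\nabla\mathcal L^\tau$ to the window. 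The only exceptions are the extra terminal terms at $m_2$ and the initial-state entry, which are also restrictions of global quantities. Since $\nabla\mathcal L^\tau=-\Gamma^\tau(\Delta\bm z^\tau;\Delta\bm\lambda^\tau)$ and $\|\Gamma^\tau\|\le c\,\Upsilon_{\rm Upper}$ by Assumption~\ref{assumption:uniform-boundedness}, we get $\|\nabla\mathcal L_{\rm QP}^{(i),\tau}\|\le c\|(\Delta\bm z^\tau;\Delta\bm\lambda^\tau)\|$. Summing the squares over $i$ gives the $C_2(\sum_i(\epsilon_i^\tau)^2)^{1/2}$ term.

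\textbf{Decomposition term.} We would compare the exact local solution with the restriction of the global Newton direction to the window. Restricted to $[m_1^i,m_2^i]$, the global direction solves $\mathcal{LP}_\mu^i(\bm d_i^{\rm true})$ with the "true" boundary data $\bm d_i^{\rm true}=(\Delta\bm x_{m_1};\Delta\bm x_{m_2};\Delta\bm u_{m_2};\Delta\bm\lambda_{m_2+1})$; this is the linear analogue of \cite[Theorem 2.1]{naFOTD}. The exact local solution instead solves it with $\bm d_i^\tau=\bm 0$ (apart from the correct initial state when $m_1=0$). The exponential decay of sensitivity for the uniformly well-posed local LQ problems (\cite{9840913}; \cite[Lemma 4.3-type result]{naFOTD}, valid for $\mu\ge\bar\mu$) then gives, for $k$ in the core,
\[
\|(\text{exact local})_k-(\Delta\bm z_k;\Delta\bm\lambda_k)\|\le C\rho^{\,\min(k-m_1,\,m_2-k)}\|\bm d_i^{\rm true}-\bm d_i^\tau\|.
\]
Here the distance to each boundary is at least $b_i^{L,\tau}$ or $b_i^{R,\tau}$, and the corresponding boundary mismatch vanishes when the overlap is clipped at $0$ or $N$. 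So every nonzero boundary contribution carries a factor $\rho^{b_i^\tau}$, and the contribution is absent when the window is the full horizon. We bound $\|\bm d_i^{\rm true}\|\le\|(\Delta\bm z^\tau;\Delta\bm\lambda^\tau)\|$. Summing the decaying profile $\rho^{j}$ over the core stages gives a geometric series bounded by $1/(1-\rho^2)$, so the core restriction error is at most $C\rho^{b_i^\tau}\|(\Delta\bm z^\tau;\Delta\bm\lambda^\tau)\|$. Squaring and summing over $i$ yields the $C_1(\sum_i\rho^{2b_i^\tau})^{1/2}$ term.

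\textbf{Main obstacle.} The hardest step is the EDS bound for the perturbation in the boundary data. We must verify that it applies to the local KKT system with the $\mu$-regularized terminal block, uniformly in $i,\tau$ and in the window length, and that $\rho$ depends only on $(\gamma_C,t_C,\Upsilon_{\rm Upper},\gamma_{\rm RH},\mu)$. We would first try to invoke the FOTD sensitivity lemma directly, since $\bar\mu$ matches its threshold. If that fails, we would instead derive the bound from a uniform bound on $(\Gamma_i^\tau)^{-1}$ combined with the banded structure of $\Gamma_i^\tau$ (Demko-type decay of inverses of banded, uniformly well-conditioned matrices). A secondary nuisance is that the core $\mathcal K_i$ uses the global $\bm\lambda_k$ indexing; we would check that local and global dual increments agree stage by stage.
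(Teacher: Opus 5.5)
Your proposal is correct and has the same architecture as the paper's proof. The paper also runs the triangle inequality through the composed exact local solutions, bounds each local inexactness error by $\|(\Gamma_i^\tau)^{-1}\|\,\epsilon_i^\tau\,\|\nabla\mathcal L_{\rm QP}^{(i),\tau}\|$, and uses an exponential-decay-of-sensitivity estimate for the zero-boundary-data mismatch (it cites \cite[Theorem 4.1]{naFOTD}), summed geometrically over the cores, which sit at distance at least $b_i^\tau$ from every interior interface.

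The one step where you genuinely differ is the control of $\|\nabla\mathcal L_{\rm QP}^{(i),\tau}\|$. The paper bounds it by $\|\Gamma_i^\tau\|$ times the norm of the exact local solution. It then bounds that solution by the restricted global direction plus the sensitivity error, so its $C_2$ carries the factor $1+2C_d/\sqrt{1-\rho_d^2}$. You observe instead that with zero boundary data the local right-hand side is a zero-padded restriction of $\nabla\mathcal L^\tau$: the terminal correction terms vanish, and the $\bm\zeta_{m_1}$ entry is $\bm 0$ for $m_1>0$ and $\nabla_{\bm\lambda_0}\mathcal L^\tau$ for $m_1=0$. Hence $\|\nabla\mathcal L_{\rm QP}^{(i),\tau}\|\le\|\nabla\mathcal L^\tau\|\le\|\Gamma^\tau\|\,\|(\Delta\bm z^\tau;\Delta\bm\lambda^\tau)\|$. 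This is more elementary and makes $C_2$ independent of the sensitivity constants.

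Your Demko-type fallback is also a valid self-contained substitute for the cited sensitivity theorem. It uses off-diagonal decay of the inverse of the stage-ordered, block-banded, uniformly well-conditioned $\Gamma_i^\tau$, obtained e.g.\ from the positive definite case applied to $\Gamma_i^\tau(\Gamma_i^\tau)^T$. It works because the boundary data enter only the rows of stages $m_1$ and $m_2$.

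One correction: your displayed estimate, with exponent $\min(k-m_1,m_2-k)$ multiplying the combined mismatch $\|\bm d_i^{\rm true}-\bm d_i^\tau\|$, is too weak near a clipped end. For $i=0$ one always has $m_1=0$. So at $k=0$ the exponent is $0$, even though the only nonzero mismatch sits at distance at least $b_0^\tau$ on the right. You need the two-sided form $C\bigl(\rho^{k-m_1}\|\Delta\bm x_{m_1}\|+\rho^{m_2-k}\|(\Delta\bm x_{m_2};\Delta\bm u_{m_2};\Delta\bm\lambda_{m_2+1})\|\bigr)$, with each term dropped when its end is exact. This is what the exponential-decay-of-sensitivity results provide, and it is what your subsequent sentence actually uses.
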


Notice that the bound \eqref{eq:global-inexact-error-bound} of Lemma \ref{lem:global-residual-error} controls the error on the global solution after composing the local solutions, demonstrating that \emph{each subproblem contributes individually} to the global error: \emph{linearly} in its tolerance $\epsilon_i^\tau$ and with \emph{exponential decay} in its own overlap size $b_i^\tau$. The update rules mirror these two functional dependences, with additive overlap growth targeting the exponentially decaying term and multiplicative tolerance contraction targeting the linear term. 

We next establish finite termination of the adaptive accuracy test. In particular, the residual-driven local updates produce a concatenated direction satisfying the accuracy condition \eqref{eq:globalResidualBound} in finitely many inner iterations, after which the descent condition \eqref{eq:descent-direction-condition} is attainable through updates of the penalty parameters.
\begin{lemma}[Finite termination of the accuracy loop]\label{lem:global-adaptive-well-posed}
Let Assumptions \ref{assumption:uniform-lower-bound-hessian}--\ref{assumption:compactness} hold and let $\mu \geq \bar{\mu}$. Fix any outer iteration $\tau$ and hold the merit parameters $(\eta_1^\tau, \eta_2^\tau, \epsilon^\tau)$, with $\epsilon^\tau > 0$, constant. Then, starting from any local parameters $\{\epsilon_i^\tau\}_{i=0}^{M-1}$, $\{b_i^\tau\}_{i=0}^{M-1}$, the residual-driven updates \eqref{eq:subproblem-error-update}--\eqref{eq:subproblem-overlap-update} attain the global accuracy condition \eqref{eq:globalResidualBound} after finitely many inner iterations of the accuracy loop of Algorithm~\ref{alg:aotd-compact}.
\end{lemma}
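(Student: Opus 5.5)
The plan is to convert the direction-error bound of Lemma~\ref{lem:global-residual-error} into a bound on the global residual, and then show that the update laws drive the right-hand side of that bound to zero. Within a fixed outer iteration $\tau$, the quantities $\Gamma^\tau$, $\nabla\mathcal L^\tau$, $\Psi^\tau$ and $\epsilon^\tau>0$ are all constant. Hence the target threshold
\[
T^\tau:=\epsilon^\tau\|\nabla\mathcal L^\tau\|/(\|\Gamma^\tau\|\Psi^\tau)
\]
is a fixed number. It is strictly positive whenever $\|\nabla\mathcal L^\tau\|>0$. If $\nabla\mathcal L^\tau=0$, the algorithm returns before the loop (or, when $\mathrm{tol}_{\rm KKT}=0$, the exact Newton direction is zero). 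We will treat that degenerate case separately: every local right-hand side then vanishes except possibly the initial-state correction, which is also zero. Condition \eqref{eq:local-residual-cond} then forces exact local solutions, so $\bm r=0$ and \eqref{eq:globalResidualBound} holds trivially.

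The first step uses exactness of the Newton direction, which gives $\bm r(\Delta\tilde{\bm z},\Delta\tilde{\bm\lambda})=\Gamma^\tau(\Delta\tilde{\bm z}-\Delta\bm z^\tau;\Delta\tilde{\bm\lambda}-\Delta\bm\lambda^\tau)$. Therefore
\[
\|\bm r\|\le\|\Gamma^\tau\|\,\|(\Delta\tilde{\bm z}-\Delta\bm z^\tau;\Delta\tilde{\bm\lambda}-\Delta\bm\lambda^\tau)\|.
\]
Combining this with \eqref{eq:global-inexact-error-bound} and $\|(\Delta\bm z^\tau;\Delta\bm\lambda^\tau)\|\le\|(\Gamma^\tau)^{-1}\|\|\nabla\mathcal L^\tau\|$ yields
\[
\|\bm r\|\le\|\Gamma^\tau\|\|(\Gamma^\tau)^{-1}\|\|\nabla\mathcal L^\tau\|\Big[C_1\big(\textstyle\sum_i\rho^{2b_i}\big)^{1/2}+C_2\big(\textstyle\sum_i\epsilon_i^2\big)^{1/2}\Big].
\]
The inverse norm is finite by Assumption~\ref{assumption:uniform-lower-bound-hessian}, with a uniform bound from \cite[Lemma 5.1]{naFOTD}. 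It therefore suffices to make the bracket smaller than the fixed positive constant $\epsilon^\tau/(\|\Gamma^\tau\|^2\|(\Gamma^\tau)^{-1}\|\Psi^\tau)$. Importantly, this bound holds for every realization of the inexact local solves, so nonmonotone or randomized solvers pose no issue (almost surely, in the randomized case).

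The second step exploits the uniform-progress property of the updates. On each failing pass, \eqref{eq:subproblem-error-update} multiplies every $\epsilon_i^\tau$ by at most $e^{-\varrho_\epsilon/\sqrt M}<1$, because $\|\bm r_i\|/\|\bm r\|\ge0$. Likewise, \eqref{eq:subproblem-overlap-update} increases every $b_i^\tau$ by at least one. Note that $\|\bm r\|>0$ on a failing pass, so the ratios are well defined. After $j$ failing passes, $\sum_i(\epsilon_i^\tau)^2\le e^{-2j\varrho_\epsilon/\sqrt M}\sum_i(\epsilon_i^{\tau,0})^2$ and $\sum_i\rho^{2b_i^\tau}\le\rho^{2j}\sum_i\rho^{2b_i^{\tau,0}}$. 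Both decay geometrically in $j$. One subtlety needs care: once $b_i$ exceeds the horizon, the window is clipped to the full horizon. By the statement of Lemma~\ref{lem:global-residual-error}, the boundary term is then simply omitted, which only helps, so clipping does not break the argument.

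Concluding, we will pick the smallest $j^\ast$ making the geometric bound below the threshold. Then the accuracy test must hold at or before pass $j^\ast$, and each pass itself terminates in finitely many local-solver iterations by the standing solver assumption. The main obstacle is checking that $C_1,C_2,\rho$ in Lemma~\ref{lem:global-residual-error} really are independent of the current overlaps and tolerances, so that the geometric decay is not offset by growing constants. We take this from the lemma's statement, since the constants are independent of $i$ and $\tau$ and the lemma holds for arbitrary $b_i\ge1$ and $\mu\ge\bar\mu$.
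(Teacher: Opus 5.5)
Your proposal is correct and follows essentially the same route as the paper: write $\bm r=\Gamma^\tau(\Delta\tilde{\bm z}-\Delta\bm z^\tau;\Delta\tilde{\bm\lambda}-\Delta\bm\lambda^\tau)$, bound it through Lemma~\ref{lem:global-residual-error} by a quantity that depends only on the local parameters, and then use the uniform per-pass contraction $e^{-\varrho_\epsilon/\sqrt M}$ and the overlap increment of at least one to push that quantity below a fixed positive threshold after finitely many failing passes. The differences are cosmetic. You let the aggregated sums decay geometrically, where the paper imposes per-subproblem targets, and you replace $\|(\Delta\bm z^\tau;\Delta\bm\lambda^\tau)\|$ by $\|(\Gamma^\tau)^{-1}\|\,\|\nabla\mathcal L^\tau\|$, as the paper itself does later in the proof of Lemma~\ref{lem:stability}.
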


\begin{lemma}[Sufficient condition for a descent direction]
\label{lem:wellposed-global-descent}
Suppose Assumptions~\ref{assumption:uniform-lower-bound-hessian}--\ref{assumption:compactness} hold, and let $(\Delta \tilde{\bm z},\Delta \tilde{\bm \lambda})$ satisfy the global residual condition \eqref{eq:globalResidualBound}. There exists $\varpi \geq 1$, independent of $\tau$, such that the global descent condition \eqref{eq:descent-direction-condition} holds whenever
\begin{align}
\eta_1^\tau \eta_2^\tau \geq \varpi,
\qquad
\max\{\eta_2^\tau,\epsilon^\tau\eta_1^\tau/\eta_2^\tau\}\le 1/\varpi\,.
\label{eq:wellposed-global-descent-parameter-condition}
\end{align}
\end{lemma}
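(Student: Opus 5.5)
We will prove Lemma~\ref{lem:wellposed-global-descent} by splitting the composed direction into the exact Newton direction plus an error. Write $\bm w=(\Delta\tilde{\bm z};\Delta\tilde{\bm\lambda})$ and let $\bm w^\ast=(\Delta\bm z^\tau;\Delta\bm\lambda^\tau)$ be the exact solution of \eqref{eq:newtonSystem}. Then
\[
\Gamma^\tau(\bm w-\bm w^\ast)=\bm r,
\qquad\text{so}\qquad
\bm e:=\bm w-\bm w^\ast=(\Gamma^\tau)^{-1}\bm r .
\]
Differentiating \eqref{eq:augmentedLagrangian} gives
\[
\nabla\mathcal L_{\bm\eta}=\begin{pmatrix}\mathbb I+\eta_2 H & \eta_1 G^T\\ \eta_2 G & \mathbb I\end{pmatrix}\nabla\mathcal L ,
\]
where $H$ is the true Hessian, not $\hat H$. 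The plan is to compute $(\nabla\mathcal L_{\bm\eta})^T\bm w^\ast$ using $\Gamma^\tau\bm w^\ast=-\nabla\mathcal L$, and then bound $(\nabla\mathcal L_{\bm\eta})^T\bm e$ via \eqref{eq:globalResidualBound}.

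\textbf{Exact-direction term.} Write $\nabla\mathcal L=(\bm g_z;\bm c)$. From the Newton system, $G\Delta\bm z=-\bm c$ and $\hat H\Delta\bm z+G^T\Delta\bm\lambda=-\bm g_z$. Expanding gives
\[
(\nabla\mathcal L_{\bm\eta})^T\bm w^\ast
=\bm g_z^T\Delta\bm z+\bm c^T\Delta\bm\lambda-\eta_1\|\bm c\|^2
-\eta_2\|\bm g_z\|^2-\eta_2\|\bm c\|^2
+\eta_2\,\bm g_z^T(H-\hat H)\Delta\bm z .
\]
The mismatch term $H-\hat H$ is bounded by $2\Upsilon_{\rm Upper}$ under Assumptions~\ref{assumption:uniform-boundedness}--\ref{assumption:compactness}.

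The unscaled cross terms $\bm g_z^T\Delta\bm z+\bm c^T\Delta\bm\lambda$ are handled with the standard null-space decomposition. Set $\Delta\bm z=\bm v+Z\bm y$ with $\bm v=-G^T(GG^T)^{-1}\bm c$, so $\|\bm v\|\le\|\bm c\|/\sigma_{\min}$. Assumption~\ref{assumption:uniform-lower-bound-hessian}, together with the uniform bound on $\|(\Gamma^\tau)^{-1}\|$ from \cite[Lemma 5.1]{naFOTD}, gives $\|\bm w^\ast\|\le\kappa\|\nabla\mathcal L\|$. Then Young's inequality yields
\[
|\bm g_z^T\Delta\bm z+\bm c^T\Delta\bm\lambda|
\le\tfrac{\eta_2}{8}\|\bm g_z\|^2+\tfrac{\kappa'}{\eta_2}\|\bm c\|^2 ,
\]
with some care on the reduced component. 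The $\bm g_z$-part of the $H-\hat H$ term is similarly split as $\le\tfrac{\eta_2}{8}\|\bm g_z\|^2+C\eta_2\|\bm c\|^2$ plus a piece controlled by $\eta_2\|\bm g_z\|\,\|Z\bm y\|$. That last piece is where $\eta_2\le1/\varpi$ is used, since it then costs at most $\tfrac{\eta_2}{8}\|\nabla\mathcal L\|^2$ once $\varpi$ absorbs $\kappa\Upsilon_{\rm Upper}$.

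The resulting $\|\bm c\|^2$ coefficient is $-\eta_1+\kappa'/\eta_2+C$. This is made $\le-\eta_2$ by choosing $\eta_1\eta_2\ge\varpi$ with $\varpi\ge 2\kappa'+2C+1$ (using $\eta_2\le1$).

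\textbf{Error term.} By \eqref{eq:globalResidualBound} and the bound on $\|(\Gamma^\tau)^{-1}\|$,
\[
\|\bm e\|\le \|(\Gamma^\tau)^{-1}\|\,\epsilon^\tau\|\nabla\mathcal L\|/(\|\Gamma^\tau\|\Psi^\tau)\lesssim\epsilon^\tau\|\nabla\mathcal L\|/\Psi^\tau .
\]
Also $\|\nabla\mathcal L_{\bm\eta}\|\le(1+\eta_1\Upsilon+\eta_2\Upsilon)\|\nabla\mathcal L\|$. Hence
\[
|(\nabla\mathcal L_{\bm\eta})^T\bm e|\le C''\epsilon^\tau(1+\eta_1)\|\nabla\mathcal L\|^2 .
\]
Here $\Psi^\tau$ supplies the $\gamma_{\rm RH}$ and $\sigma_{\min}$ factors that make $C''$ uniform. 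We need this to be at most $\tfrac{\eta_2}{8}\|\nabla\mathcal L\|^2$. This follows from $\epsilon^\tau\eta_1/\eta_2\le1/\varpi$ for the $\eta_1$ part. For the constant part we use $\epsilon^\tau\le\epsilon_{\max}^\tau\lesssim\eta_2$ from \eqref{eq:globalEpsilonMin}, or equivalently $\epsilon^\tau\le(\epsilon^\tau\eta_1/\eta_2)\eta_2/\eta_1\le\eta_2/(\varpi\eta_1)$ together with $\eta_1\ge\varpi/\eta_2\ge1$.

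\textbf{Conclusion.} Collecting the pieces gives $(\nabla\mathcal L_{\bm\eta})^T\bm w\le-\tfrac{\eta_2}{2}\|\nabla\mathcal L\|^2$, taking $\varpi$ to be the maximum of the finitely many constants above, all independent of $\tau$.

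\textbf{Main obstacle.} The hardest step is the cross term $\bm g_z^T\Delta\bm z+\bm c^T\Delta\bm\lambda$, which carries no penalty weight. The reduced-gradient component of $\bm g_z$ cannot be absorbed by $\eta_1\|\bm c\|^2$. I would first try to exploit the exact Newton identity
\[
\bm g_z^T\Delta\bm z+\bm c^T\Delta\bm\lambda=-\Delta\bm z^T\hat H\Delta\bm z-2\bm c^T\Delta\bm\lambda .
\]
Then $-(Z\bm y)^T\hat HZ\bm y\le-\gamma_{\rm RH}\|\bm y\|^2$ is a favorable sign, and the remaining terms are all $O(\|\bm c\|\,\|\bm w^\ast\|)$, which are absorbable by $\eta_1$.
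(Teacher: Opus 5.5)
Your split into the exact Newton part plus the error $\bm e=(\Gamma^\tau)^{-1}\bm r$ is the paper's $\mathcal I_1+\mathcal I_2$. Your error-term argument is also sound: the second option uses only $\epsilon^\tau\eta_1^\tau/\eta_2^\tau\le1/\varpi$ and $\eta_1^\tau\ge\varpi/\eta_2^\tau\ge1$, so it stays within the lemma's hypotheses.

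The gap is in the exact part, at the piece $\eta_2\bm g_z^T(H-\hat H)Z\bm y$. You bound it by $2\eta_2\Upsilon_{\rm Upper}\|\bm g_z\|\,\|Z\bm y\|\le2\kappa\Upsilon_{\rm Upper}\,\eta_2\|\nabla\mathcal L\|^2$ and say that $\eta_2\le1/\varpi$ brings this below $\tfrac{\eta_2}{8}\|\nabla\mathcal L\|^2$. It cannot. Both sides are linear in $\eta_2$, so their ratio $16\kappa\Upsilon_{\rm Upper}$ does not improve as $\eta_2$ shrinks. The term is genuinely present away from a solution, where $H\neq\hat H$; for a uniform shift $\hat H=H+s\mathbb I$ and $\bm c=\bm 0$ it equals $+\eta_2 s\,\Delta\bm z^T\hat H\Delta\bm z$.

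Relatedly, your displayed bound $|\bm g_z^T\Delta\bm z+\bm c^T\Delta\bm\lambda|\le\tfrac{\eta_2}{8}\|\bm g_z\|^2+\tfrac{\kappa'}{\eta_2}\|\bm c\|^2$ is false, not merely delicate. If $\bm c=\bm 0$ and $\bm g_z\in\mathrm{null}(G)$, the left side equals $\Delta\bm z^T\hat H\Delta\bm z\ge(\gamma_{\rm RH}/\Upsilon_{\rm Upper}^2)\|\bm g_z\|^2$. This exceeds $\tfrac{\eta_2}{8}\|\bm g_z\|^2$ once $\eta_2<8\gamma_{\rm RH}/\Upsilon_{\rm Upper}^2$, which is exactly the small-$\eta_2$ regime the lemma is about.

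Both defects are repaired by the identity from your last paragraph, with the sign corrected to $\bm g_z^T\Delta\bm z+\bm c^T\Delta\bm\lambda=-\Delta\bm z^T\hat H\Delta\bm z+2\bm c^T\Delta\bm\lambda$. The condition is that you keep the curvature term $-\gamma_{\rm RH}\|\bm y\|^2$ it produces and spend part of it on the Hessian mismatch:
\[
2\eta_2\Upsilon_{\rm Upper}\|\bm g_z\|\,\|\bm y\|\le\tfrac{\gamma_{\rm RH}}{2}\|\bm y\|^2+\tfrac{2\eta_2^2\Upsilon_{\rm Upper}^2}{\gamma_{\rm RH}}\|\bm g_z\|^2 .
\]
The last term is at most $\tfrac{\eta_2}{8}\|\bm g_z\|^2$ once $\eta_2\le\gamma_{\rm RH}/(16\Upsilon_{\rm Upper}^2)$. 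This quadratic-in-$\eta_2$ trade against $\gamma_{\rm RH}$ is the actual role of $\eta_2\le1/\varpi$. The paper uses it the same way: its condition on $\eta_2$ makes $3\eta_2\Upsilon_{\rm Upper}^2$ dominated by $\gamma_{\rm RH}/2$.

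Your expansion also contains a spurious $-\eta_2\|\bm c\|^2$. Since $\nabla_{\bm\lambda}\mathcal L_{\bm\eta}=\bm c+\eta_2G\bm g_z$, no such term arises, but you never rely on it.

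With these repairs your argument closes: the exact part is $\le-\tfrac{5\eta_2}{8}\|\nabla\mathcal L\|^2$ and the error is $\le\tfrac{\eta_2}{8}\|\nabla\mathcal L\|^2$. It is then a legitimately different bookkeeping from the paper's.
\begin{itemize}
\item The paper writes $\mathcal I_1=-(\bm w^\ast)^TP\Gamma^\tau\bm w^\ast$, with $P$ the matrix in your gradient formula, as a quadratic form in the Newton step. It extracts $-\tfrac{\eta_2}{2}\|\nabla\mathcal L\|^2$ and keeps a surplus $-\tfrac{\eta_2(\sigma_{\min}^\tau)^2}{8}\|\bm w^\ast\|^2$ to absorb $\mathcal I_2\le10\epsilon^\tau\eta_1\Upsilon_{\rm Upper}^2\|\bm w^\ast\|^2$.
\item Your mixed identity exposes $-\eta_1\|\bm c\|^2-\eta_2\|\bm g_z\|^2$ immediately and budgets everything, including the error, against $\|\nabla\mathcal L\|^2$.
\end{itemize}
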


Consequently, from Lemma~\ref{lem:wellposed-global-descent}, under the adaptive update rule \eqref{eq:adaptive-parameter-update}--\eqref{eq:adaptive-epsilon-update}, the global descent direction condition is eventually satisfied. Hence, the scheme is well-posed as summarized in the corollary below:

\begin{corollary}\label{corr:achievability}
Under Assumptions \ref{assumption:uniform-lower-bound-hessian}--\ref{assumption:compactness} with $\mu \geq \bar{\mu}$, the adaptive update rules in Algorithm \ref{alg:aotd-compact} admit global parameters $\epsilon^\tau, \eta_1^\tau, \eta_2^\tau$ and local parameters  $\{\epsilon_i^\tau\}_{i=0}^{M-1}$, $\{b_i^\tau\}_{i=0}^{M-1}$ for which both \eqref{eq:globalResidualBound} and \eqref{eq:descent-direction-condition} hold simultaneously in finitely many iterations.
\end{corollary}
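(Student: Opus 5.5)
The argument combines Lemma~\ref{lem:global-adaptive-well-posed} (the accuracy loop terminates) with Lemma~\ref{lem:wellposed-global-descent} (descent holds for suitable penalties). We must also show that the outer descent-update loop of Algorithm~\ref{alg:aotd-compact} runs only finitely many times, so that condition \eqref{eq:wellposed-global-descent-parameter-condition} is eventually met. We fix an outer iteration $\tau$ throughout; the iterate, $\hat H^\tau$, $G^\tau$, $\Gamma^\tau$, $\Psi^\tau$ and $\Upsilon^\tau$ are then constant, and only $(\eta_1^\tau,\eta_2^\tau,\epsilon^\tau)$ and the local parameters change.

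\textbf{Step 1 (inner loop).} Each pass of the outer \texttt{while} loop begins with merit parameters $(\eta_1^\tau,\eta_2^\tau,\epsilon^\tau)$, where $\epsilon^\tau>0$, held fixed. By Lemma~\ref{lem:global-adaptive-well-posed}, the inner accuracy loop produces a composed direction satisfying \eqref{eq:globalResidualBound} after finitely many updates \eqref{eq:subproblem-error-update}--\eqref{eq:subproblem-overlap-update}. This holds from any starting local parameters, so carrying over the $\epsilon_i^\tau$ and $b_i^\tau$ from the previous pass causes no difficulty. We need $\epsilon^\tau>0$ to be preserved. This is immediate: \eqref{eq:adaptive-epsilon-update} is a minimum of two positive numbers, since $\epsilon_{\max}^\tau>0$ whenever $\eta_2^\tau>0$, and the reset in line~7 gives $\epsilon_{\max}^\tau/\nu>0$.

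\textbf{Step 2 (penalty schedule).} Suppose descent fails $j$ times at iteration $\tau$, and write $\eta_1,\eta_2,\epsilon$ for the values at the start of the descent loop. After the $j$ failures we have $\eta_1\nu^{2j}$ and $\eta_2\nu^{-j}$, so $\eta_1\eta_2$ grows like $\nu^{j}\to\infty$ and $\eta_2\nu^{-j}\to 0$. For the last condition in \eqref{eq:wellposed-global-descent-parameter-condition}, \eqref{eq:adaptive-epsilon-update} gives $\epsilon^{(j)}\le \epsilon\,\nu^{-4j}$. Hence
\[
\epsilon^{(j)}\eta_1^{(j)}/\eta_2^{(j)}\le \epsilon\,\nu^{-4j}\,\eta_1\nu^{2j}\,\nu^{j}/\eta_2=(\epsilon\eta_1/\eta_2)\,\nu^{-j}\to 0 .
\]
Therefore there is a finite $j^\ast$, depending on the constant $\varpi$ (independent of $\tau$) and on the initial values, after which all three inequalities of \eqref{eq:wellposed-global-descent-parameter-condition} hold. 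From then on they remain true, since every ratio is monotone in $j$.

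\textbf{Step 3 (termination).} At pass $j^\ast$, Step~1 yields a direction satisfying \eqref{eq:globalResidualBound} with the current $\epsilon^\tau$. Lemma~\ref{lem:wellposed-global-descent} then gives \eqref{eq:descent-direction-condition}, so both \texttt{while} conditions fail and the loop exits with both conditions holding. The outer loop runs at most $j^\ast+1$ times and each inner loop is finite, so the total number of iterations is finite.

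The main obstacle is the exponent bookkeeping in Step~2. The factor $\nu^{-4}$ in \eqref{eq:adaptive-epsilon-update} has to beat the growth $\nu^{3}$ of $\eta_1/\eta_2$, and we must check that the $\min$ with $\epsilon_{\max}^\tau$ only decreases $\epsilon^\tau$, which makes the bound stronger. A second point is the dependence on $\epsilon^\tau$: the residual tolerance in \eqref{eq:globalResidualBound} shrinks as $\epsilon^\tau\to0$. Lemma~\ref{lem:global-adaptive-well-posed} applies anyway, because it holds for every fixed $\epsilon^\tau>0$ and each pass uses such a value.
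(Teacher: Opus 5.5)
Your proposal is correct and follows essentially the same route as the paper's proof. Each descent-failure update multiplies $\eta_1\eta_2$ by $\nu$ and divides both $\eta_2$ and $\epsilon\eta_1/\eta_2$ by at least $\nu$, so the parameter condition of Lemma~\ref{lem:wellposed-global-descent} is met after finitely many updates, and Lemma~\ref{lem:global-adaptive-well-posed} makes the accuracy loop terminate within each segment where the global parameters are frozen. You also check explicitly that $\epsilon^\tau$ stays positive and that the $\min$ with $\epsilon^\tau_{\max}$ only strengthens the bound, which the paper leaves implicit.
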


\subsection{Global Convergence of the Algorithm}
We are now ready to present our main result of global convergence. To establish this, we first need to ensure that all the adaptive parameters stabilize.

\begin{lemma}[Stabilization of the adaptive parameters]\label{lem:stability}
Suppose Assumptions \ref{assumption:uniform-lower-bound-hessian}--\ref{assumption:compactness} hold with $\mu \geq \bar{\mu}$. Then there exists a finite threshold ${\bar{\tau}} \geq 0$ such that, for all $\tau \geq {\bar{\tau}}$ and every $i \in [M-1]$, the adaptive parameters of Algorithm~\ref{alg:aotd-compact} freeze,
\begin{align*}
(\eta_1^\tau, \eta_2^\tau, \epsilon^\tau, \epsilon_i^\tau, b_i^\tau)
= (\eta_1^{\bar{\tau}}, \eta_2^{\bar{\tau}}, \epsilon^{\bar{\tau}}, \epsilon_i^{\bar{\tau}}, b_i^{\bar{\tau}})\,,
\end{align*}
the descent direction condition \eqref{eq:descent-direction-condition} holds, and 
\begin{align}  \label{eq:stability-local-bounds}
\epsilon^{\bar{\tau}}\in(0,1)\,, \qquad \epsilon_i^{\bar{\tau}}>0\,.
\end{align}
\end{lemma}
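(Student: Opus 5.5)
The proof splits into a global part and a local part. First, the global parameters $(\eta_1^\tau,\eta_2^\tau,\epsilon^\tau)$ can only be updated finitely many times. Second, once they freeze, the local parameters $(\epsilon_i^\tau,b_i^\tau)$ can also only be updated finitely many times.

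\emph{Global parameters.} Every failed descent test applies \eqref{eq:adaptive-parameter-update}--\eqref{eq:adaptive-epsilon-update}. Each application multiplies $\eta_1\eta_2$ by $\nu$, divides $\eta_2$ by $\nu$, and divides $\epsilon\eta_1/\eta_2$ by at least $\nu^4/\nu^3=\nu$. The last claim holds because $\epsilon$ is divided by at least $\nu^4$, and the cap $\epsilon^\tau_{\max}$ in \eqref{eq:adaptive-epsilon-update} or in line 8 of Algorithm~\ref{alg:aotd-compact} can only decrease $\epsilon$. These updates are never undone across outer iterations, since the parameters are carried over. Hence after at most $K=\lceil\log_\nu(\varpi\cdot c_0)\rceil$ descent failures in total, with $c_0$ depending only on the initial values, condition \eqref{eq:wellposed-global-descent-parameter-condition} holds. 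By Lemma~\ref{lem:wellposed-global-descent}, every direction satisfying \eqref{eq:globalResidualBound} is then a descent direction, so no further descent updates occur.

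The remaining way $\epsilon^\tau$ can change is the reset in line 8 when $\epsilon^\tau>\epsilon^\tau_{\max}$. To control it, I would bound $\epsilon^\tau_{\max}$ from below uniformly. Assumptions~\ref{assumption:uniform-lower-bound-hessian}--\ref{assumption:compactness} give $\|\hat H^\tau\|,\|G^\tau\|\le c\,\Upsilon_{\rm Upper}$. Together with the uniform lower bound on $\sigma_{\min}(G^\tau)$ from controllability \cite[Lemma 4.1]{naFOTD}, this makes $\Psi^\tau$ and $\Upsilon^\tau$ uniformly bounded above. With $\eta$ frozen, $\epsilon^\tau_{\max}\ge\underline\epsilon>0$. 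Each reset divides $\epsilon$ by $\nu$ and sets it to at most $\epsilon^\tau_{\max}/\nu$, and $\epsilon$ never increases. So once $\epsilon\le\underline\epsilon$ no reset triggers, and only finitely many resets occur. Thus there is $\tau_1$ after which $\eta_1,\eta_2,\epsilon$ are constant. The bound $\epsilon\in(0,1)$ follows from $\epsilon\le\epsilon_{\max}\le 0.5\,\eta_2/(1+\eta_2)<1$, using $\Psi\ge 20$ and $\Upsilon\ge1$. The descent condition holds at every $\tau\ge\bar\tau$ because the loop only exits once it holds.

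\emph{Local parameters (main obstacle).} Lemma~\ref{lem:global-adaptive-well-posed} gives finite termination within each single outer iteration, but it does not prevent infinitely many updates summed over all outer iterations. I need thresholds uniform in $\tau$. The plan is to combine Lemma~\ref{lem:global-residual-error} with $\|\bm r\|\le\|\Gamma^\tau\|\,\|(\Delta\tilde{\bm z}-\Delta\bm z;\Delta\tilde{\bm\lambda}-\Delta\bm\lambda)\|$ and $\|(\Delta\bm z;\Delta\bm\lambda)\|\le\|(\Gamma^\tau)^{-1}\|\,\|\nabla\mathcal L^\tau\|\le\kappa\|\nabla\mathcal L^\tau\|$, where $\kappa$ is uniform by \cite[Lemma 5.1]{naFOTD}. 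This shows \eqref{eq:globalResidualBound} holds whenever
\[
C_1\Bigl(\sum\nolimits_i\rho^{2b_i}\Bigr)^{1/2}+C_2\Bigl(\sum\nolimits_i\epsilon_i^2\Bigr)^{1/2}\le \frac{\epsilon^{\tau_1}}{\kappa\,\bar\Gamma^2\,\bar\Psi}=:\theta>0,
\]
with $\bar\Gamma,\bar\Psi$ the uniform upper bounds and $\theta$ independent of $\tau$ for $\tau\ge\tau_1$. This condition is satisfied once every $b_i\ge \bar b$ and every $\epsilon_i\le\bar\epsilon$ for explicit thresholds $\bar b,\bar\epsilon$ depending on $M,C_1,C_2,\rho,\theta$. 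Subproblems covering the full horizon contribute no boundary term.

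By the uniform-progress property, every failed accuracy pass raises every $b_i$ by at least one and contracts every $\epsilon_i$ by at least $e^{-\varrho_\epsilon/\sqrt M}$. These parameters are monotone and carried over between outer iterations. Therefore, after at most $\max\{\bar b,\lceil\sqrt M\log(\epsilon_i^0/\bar\epsilon)/\varrho_\epsilon\rceil\}$ failures in total, no failure can occur again. I also need to check the pass structure of Algorithm~\ref{alg:aotd-compact}: after the global parameters freeze, a pass with thresholds met satisfies accuracy immediately, because \eqref{eq:local-residual-cond} holds by solver assumption. Taking $\bar\tau$ as the first iteration after the last update of any kind gives the freezing claim. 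Finally, $\epsilon_i^{\bar\tau}>0$ because only finitely many positive multiplicative contractions were applied.
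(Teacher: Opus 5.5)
Your proposal is correct and follows the paper's proof essentially step for step. You freeze the penalties using the same multiplicative bookkeeping on $\eta_1\eta_2$, $\eta_2$ and $\epsilon\eta_1/\eta_2$, you bound $\epsilon^\tau_{\max}$ from below uniformly, and you get iterate-independent local thresholds from the floor $\epsilon^{\bar\tau}/(\kappa\bar\Gamma^2\bar\Psi)$ (the paper's $\delta_1$) together with uniform monotone progress. The one minor difference is how you show the line-8 cap resets are finite: you note that each reset divides $\epsilon$ itself by more than $\nu$, whereas the paper tracks the decrease of $\epsilon_{\max}$ across reset times. Both arguments rest on the same uniform floor, and yours is, if anything, slightly cleaner.
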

Lemma \ref{lem:stability} guarantees, in finite iterations, that all constants stabilize and, moreover, that the inexactness parameters $\epsilon, \epsilon_i$ are bounded away from $0$. The nominal overlaps therefore freeze at finite values, while the effective overlaps are bounded by the horizon through the clipping in \eqref{eq:effective-overlaps}. Since all adaptive parameters stabilize, the iteration eventually reduces to a fixed-parameter scheme with a constant exact-augmented-Lagrangian merit function. Hence, the uniform descent enforced by the descent-direction condition \eqref{eq:descent-direction-condition} yields the following global convergence guarantee for the KKT residual.

\begin{theorem}\label{thm:global}
Let Assumptions \ref{assumption:uniform-lower-bound-hessian}--\ref{assumption:compactness} hold and let $\mu \geq \bar{\mu}$. Then, the iterates generated by Algorithm \ref{alg:aotd-compact} satisfy the following properties:
\begin{enumerate}
\item For every $\tau \geq 0$, there exists a step size $\alpha^\tau \in (0, 1]$ satisfying the Armijo condition in \eqref{eq:line-search-cond}.
\item The KKT residual satisfies $\|\nabla \mathcal{L}^\tau\| \to 0$ as $\tau \to \infty$. 
\end{enumerate}
\end{theorem}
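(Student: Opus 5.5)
The plan is to treat the two parts separately, with the descent condition \eqref{eq:descent-direction-condition} and the parameter freezing of Lemma~\ref{lem:stability} doing most of the work.

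\emph{Part 1 (existence of an Armijo step).} Corollary~\ref{corr:achievability} guarantees that at every iteration $\tau$ the inner loops terminate with a direction satisfying both \eqref{eq:globalResidualBound} and \eqref{eq:descent-direction-condition}. If $\|\nabla\mathcal L^\tau\|=0$ the algorithm has already stopped. Otherwise the directional derivative satisfies $(\nabla\mathcal L^\tau_{\bm\eta^\tau})^T(\Delta\tilde{\bm z};\Delta\tilde{\bm\lambda})\le-\eta_2^\tau\|\nabla\mathcal L^\tau\|^2/2<0$. Because $g_k,f_k$ are $C^3$ (Assumption~\ref{assumption:compactness}), $\mathcal L_{\bm\eta}$ is $C^1$, and a first-order Taylor expansion gives the Armijo inequality \eqref{eq:line-search-cond} for all sufficiently small $\alpha$, since $\beta<1$. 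Backtracking by $\delta$ therefore terminates with some $\alpha^\tau\in(0,1]$.

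\emph{Part 2 (KKT residual tends to zero).} By Lemma~\ref{lem:stability}, for $\tau\ge\bar\tau$ the parameters are frozen at $\bm\eta=\bm\eta^{\bar\tau}$, so a single merit function $\mathcal L_{\bm\eta}$ is used and \eqref{eq:descent-direction-condition} holds with the fixed constant $\eta_2:=\eta_2^{\bar\tau}>0$. The steps are as follows.
\begin{enumerate}
\item Establish a uniform Lipschitz constant $L_\eta$ for $\nabla\mathcal L_{\bm\eta}$ on the compact set $\mathcal Z\times\Lambda$. This follows from third-order differentiability, since $\nabla\mathcal L_{\bm\eta}$ involves second derivatives of $g,f$.
\item Bound the direction uniformly: $\|(\Delta\tilde{\bm z}^\tau;\Delta\tilde{\bm\lambda}^\tau)\|\le \kappa\|\nabla\mathcal L^\tau\|$. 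To get this, write the composed direction as the exact Newton step plus an error. The exact step has norm at most $\|(\Gamma^\tau)^{-1}\|\,\|\nabla\mathcal L^\tau\|$, and $\|(\Gamma^\tau)^{-1}\|$ is uniformly bounded by \cite[Lemma 5.1]{naFOTD}. The error is $(\Gamma^\tau)^{-1}\bm r$, which by \eqref{eq:globalResidualBound} has norm at most $\|(\Gamma^\tau)^{-1}\|\,\epsilon^\tau\|\nabla\mathcal L^\tau\|/(\|\Gamma^\tau\|\Psi^\tau)$.
\item Combine these with the descent-lemma argument. Any $\alpha\le \min\{1,(1-\beta)\eta_2/(L_\eta\kappa^2)\}$ satisfies Armijo. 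Hence the accepted step obeys $\alpha^\tau\ge\underline\alpha:=\delta\min\{1,(1-\beta)\eta_2/(L_\eta\kappa^2)\}>0$, uniformly in $\tau\ge\bar\tau$.
\item Sum the Armijo decreases: $\mathcal L_{\bm\eta}^{\tau+1}\le\mathcal L_{\bm\eta}^\tau-\underline\alpha\beta\eta_2\|\nabla\mathcal L^\tau\|^2/2$. Since $\mathcal L_{\bm\eta}$ is continuous on a compact set, it is bounded below there. Telescoping from $\bar\tau$ then gives $\sum_\tau\|\nabla\mathcal L^\tau\|^2<\infty$, and hence $\|\nabla\mathcal L^\tau\|\to0$.
\end{enumerate}

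I expect the main obstacle to be the uniform step-size lower bound. Its ingredients must all be independent of $\tau$: the Lipschitz constant of $\nabla\mathcal L_{\bm\eta}$, which contains $\nabla^2\mathcal L$ terms multiplied by $\eta$; the bound on $\|(\Gamma^\tau)^{-1}\|$; and the fact that the whole segment $\alpha\in[0,1]$ remains in the compact set, which Assumption~\ref{assumption:compactness} supplies. The freezing of $\bm\eta$ from Lemma~\ref{lem:stability} is essential here, since without it the merit function would change over iterations and the telescoping in step 4 would fail.
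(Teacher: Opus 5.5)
Your proposal is correct and follows essentially the same route as the paper: a Taylor/descent-lemma bound with a uniform curvature constant for the merit function on the compact set, a uniform bound of the composed direction by a constant times $\|\nabla\mathcal L^\tau\|$ (from the KKT-inverse bound and the residual condition), the descent condition \eqref{eq:descent-direction-condition} to obtain a uniform backtracking floor on $\alpha^\tau$, and telescoping from $\bar\tau$ once $\bm\eta$ is frozen. The only minor difference is that you obtain Part 1 by a qualitative first-order argument and restrict the uniform step bound to $\tau\geq\bar\tau$, whereas the paper reads Part 1 off the same explicit step-size bound using $\eta_2^\tau\geq\eta_2^{\bar\tau}$; your version is slightly cleaner, since the curvature constant itself depends on $\bm\eta$, which changes before $\bar\tau$.
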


Theorem~\ref{thm:global} shows that the KKT residual vanishes from any initialization. Because the iterates remain in a compact set, accumulation points exist, and continuity of the KKT residual ensures that every accumulation point satisfies the KKT conditions for \eqref{eq:main-problem}. As expected, the result is first-order and does not by itself certify local optimality or a rate of convergence. To characterize this, the next section shows that once the adaptive parameters stabilize (Lemma~\ref{lem:stability}), the iterates admit a unit step size and contract linearly toward a local minimizer.

\begin{figure*}[t!]
\centering
\includegraphics[width=0.9\linewidth]{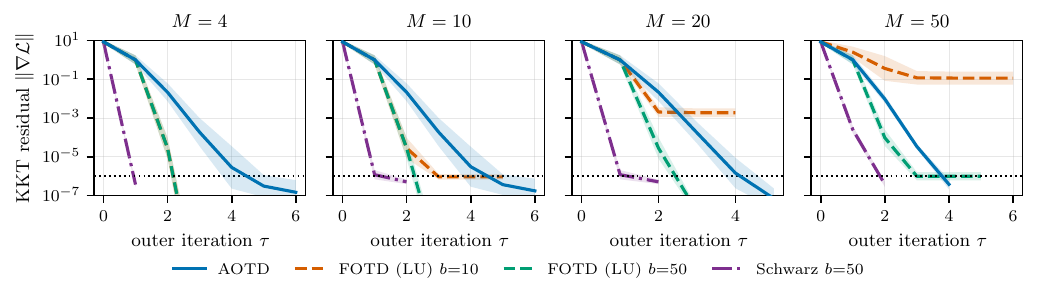}
\vspace{-1em}
\caption{KKT convergence of various algorithms on the NE39 swing OCP with $N=1000$. Lines and lighter shading show means and standard deviations over $5$ seeds. The dotted line indicates the stopping criterion tolerance of $10^{-6}$.}
\label{fig:freq_control_convergence}
\end{figure*}



\section{Local Linear Convergence} \label{sec:local-convergence}

To set the stage for the local convergence analysis, we make the additional standing assumption throughout this section that the full primal--dual sequence $\{(\bm z^\tau,\bm\lambda^\tau)\}$ converges to a KKT pair $(\bm z^*,\bm\lambda^*)$, where $\bm z^*$ is a strict local minimizer of \eqref{eq:main-problem}. This assumption is stronger than Theorem~\ref{thm:global}, which guarantees only that the KKT residual converges to zero.

\newpage 
\begin{assumption}\label{assumption:modified-hessian-vanishes}
Assume that $\|H^\tau - \hat{H}^\tau\| = o(1)$ where $H^\tau = \nabla^2_{\bm{z}}\mathcal{L}$ is the Lagrangian Hessian and $\hat{H}^\tau$ is its modification.
\end{assumption}

\begin{assumption}[Local Lipschitz continuity]
\label{assumption:local-lipschitz}
There exist an open neighborhood $\mathcal U$ of $(\bm z^\ast,\bm\lambda^\ast)$ and a constant $\Upsilon_L>0$, independent of the stage $k$, such that for any $(\bm z,\bm\lambda),(\bm z',\bm\lambda')\in\mathcal U$ and every $k\in[N-1]$,
\begin{align*}
&\max\bigl\{
\|A_k(\bm z_k)-A_k(\bm z_k')\|,\,
\|B_k(\bm z_k)-B_k(\bm z_k')\|
\bigr\}\\
&\qquad \qquad \leq
\Upsilon_L\|\bm z_k-\bm z_k'\|,\\
&\|H_k(\bm z_k,\bm\lambda_{k+1})
-H_k(\bm z_k',\bm\lambda_{k+1}')\|\\
&\qquad \qquad\leq
\Upsilon_L
\|(\bm z_k-\bm z_k';\,
\bm\lambda_{k+1}-\bm\lambda_{k+1}')\|,\\
&\|H_N(\bm z_N)-H_N(\bm z_N')\|\\
&\qquad \qquad\leq
\Upsilon_L\|\bm z_N-\bm z_N'\|.
\end{align*}
\end{assumption}

Assumptions~\ref{assumption:modified-hessian-vanishes} and \ref{assumption:local-lipschitz} are standard in local SQP analysis \cite{Boggs_Tolle_1995,nocedalAndWright}. The vanishing Hessian approximation error is needed for the eventual unit-step result in Lemma~\ref{lem:unit-step-size}, while local Lipschitz continuity controls the Taylor remainders used in that lemma and Theorem~\ref{thm:local-linear}.

Under Assumptions \ref{assumption:modified-hessian-vanishes} and \ref{assumption:local-lipschitz}, the iterates generated by Algorithm \ref{alg:aotd-compact} eventually admit a unit step size $\alpha^\tau = 1$ in the line search \eqref{eq:line-search-cond}. This is the central justification for the design of the global residual condition \eqref{eq:globalResidualBound}--\eqref{eq:globalEpsilonMin}: the threshold $\epsilon^\tau_{\rm max}$ is chosen precisely so that the inexact direction remains close enough to the exact Newton direction for the second-order Taylor remainder in the Armijo condition to be dominated by the descent term, allowing $\alpha^\tau = 1$ to satisfy the line search.

\begin{lemma}\label{lem:unit-step-size}
Let Assumptions \ref{assumption:uniform-lower-bound-hessian}--\ref{assumption:local-lipschitz} hold with $\mu \geq \bar{\mu}$. Then, the iterates given by Algorithm \ref{alg:aotd-compact} with directions $\{(\Delta \tilde{\bm{z}}^\tau, \Delta \tilde{\bm{\lambda}}^\tau)\}_\tau$ yield $\alpha^\tau = 1$ for sufficiently large $\tau$. 
\end{lemma}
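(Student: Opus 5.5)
By Lemma~\ref{lem:stability}, there is $\bar\tau$ after which $\bm\eta^\tau\equiv\bm\eta$ and $\epsilon^\tau\equiv\epsilon\le\epsilon_{\max}^\tau$, and the descent condition \eqref{eq:descent-direction-condition} holds. We therefore work with a fixed merit function $\mathcal L_{\bm\eta}$ and show that the Armijo inequality \eqref{eq:line-search-cond} holds at $\alpha=1$ for all large $\tau$.

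Write $\bm w^\tau=(\bm z^\tau;\bm\lambda^\tau)$ and $\Delta\tilde{\bm w}^\tau=(\Delta\tilde{\bm z}^\tau;\Delta\tilde{\bm\lambda}^\tau)$. By Taylor expansion with the Lipschitz Hessians of Assumption~\ref{assumption:local-lipschitz} (third derivatives of $g_k,f_k$ enter $\nabla^2\mathcal L_{\bm\eta}$),
\[
\mathcal L_{\bm\eta}(\bm w^\tau+\Delta\tilde{\bm w}^\tau)-\mathcal L_{\bm\eta}^\tau-\beta(\nabla\mathcal L_{\bm\eta}^\tau)^T\Delta\tilde{\bm w}^\tau
=(1-\beta)(\nabla\mathcal L_{\bm\eta}^\tau)^T\Delta\tilde{\bm w}^\tau+\tfrac12(\Delta\tilde{\bm w}^\tau)^T\nabla^2\mathcal L_{\bm\eta}^\tau\Delta\tilde{\bm w}^\tau+O(\|\Delta\tilde{\bm w}^\tau\|^3).
\]
Since the iterates converge, $\|\nabla\mathcal L^\tau\|\to0$. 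Using $\|\Delta\bm w^\tau\|\le\|(\Gamma^\tau)^{-1}\|\|\nabla\mathcal L^\tau\|$ (bounded inverse, [naFOTD, Lemma 5.1]) together with the residual condition \eqref{eq:globalResidualBound}, we get $\|\Delta\tilde{\bm w}^\tau\|=O(\|\nabla\mathcal L^\tau\|)$. Hence the cubic remainder is $o(\|\nabla\mathcal L^\tau\|^2)$.

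The key step is to evaluate the two leading terms by writing $\Delta\tilde{\bm w}^\tau=\Delta\bm w^\tau+\bm e^\tau$, where $\bm e^\tau=(\Gamma^\tau)^{-1}\bm r$. For the exact Newton direction, $\nabla\mathcal L_{\bm\eta}=(\mathbb I+\eta_2\nabla^2\mathcal L\,\mathrm{blk}+\ldots)$ can be written as $\nabla\mathcal L_{\bm\eta}=\bigl(\begin{smallmatrix}\mathbb I&\eta_1 G^T\\ 0&\mathbb I\end{smallmatrix}\bigr)\nabla\mathcal L+\eta_2\nabla^2\mathcal L\,\nabla\mathcal L$ (Hessian of $\mathcal L$ in $(\bm z,\bm\lambda)$, i.e.\ $K^\tau=\bigl(\begin{smallmatrix}H&G^T\\G&0\end{smallmatrix}\bigr)$). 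Replacing $H^\tau$ by $\hat H^\tau$ costs $o(1)\|\nabla\mathcal L^\tau\|^2$ by Assumption~\ref{assumption:modified-hessian-vanishes}. The Hessian $\nabla^2\mathcal L_{\bm\eta}=K+\eta_1(\cdot)+\eta_2 K^2+O(\|\nabla\mathcal L\|)$ can be handled the same way.

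The classical computation for exact SQP with the Bertsekas/Di~Pillo--Grippo augmented Lagrangian then gives
\[
(\nabla\mathcal L_{\bm\eta}^\tau)^T\Delta\bm w^\tau+\tfrac12(\Delta\bm w^\tau)^T\nabla^2\mathcal L_{\bm\eta}^\tau\Delta\bm w^\tau=\tfrac12(\nabla\mathcal L_{\bm\eta}^\tau)^T\Delta\bm w^\tau+o(\|\nabla\mathcal L^\tau\|^2),
\]
because $\nabla^2\mathcal L_{\bm\eta}\Delta\bm w\approx-\nabla\mathcal L_{\bm\eta}$ when $\Gamma\Delta\bm w=-\nabla\mathcal L$. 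Combining this with the leftover $-\beta$ part, the Armijo excess at $\alpha=1$ becomes $(\tfrac12-\beta)(\nabla\mathcal L_{\bm\eta}^\tau)^T\Delta\tilde{\bm w}^\tau$ plus error terms linear and quadratic in $\bm e^\tau$, plus $o(\|\nabla\mathcal L^\tau\|^2)$. The descent condition \eqref{eq:descent-direction-condition} bounds the main term above by $-(\tfrac12-\beta)\eta_2\|\nabla\mathcal L^\tau\|^2/2$.

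The main obstacle is showing that the $\bm e^\tau$ terms are dominated by this negative term. We bound them by
\[
\|\nabla^2\mathcal L_{\bm\eta}\|\,\|\bm e^\tau\|\,(\|\Delta\bm w^\tau\|+\|\bm e^\tau\|)\lesssim(1+\eta_1+\eta_2)\,\Upsilon^2\,\|\Gamma^{-1}\|\,\frac{\epsilon\,\|\nabla\mathcal L^\tau\|}{\|\Gamma\|\Psi}\cdot\|\Gamma^{-1}\|\,\|\nabla\mathcal L^\tau\|,
\]
where $\|\Gamma^{-1}\|$ is controlled by $\Psi^\tau$ via the standard reduced-Hessian/$\sigma_{\min}$ bound built into the definition of $\Psi^\tau$. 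The constant $\epsilon_{\max}$ in \eqref{eq:globalEpsilonMin} is designed exactly so that this error is at most a fraction, say $(\tfrac12-\beta)\eta_2/4$, of $\|\nabla\mathcal L^\tau\|^2$. The factor 20 in $\Psi$ absorbs the numerical constants.

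With these estimates, the Armijo excess is at most $-c\,\eta_2\|\nabla\mathcal L^\tau\|^2+o(\|\nabla\mathcal L^\tau\|^2)<0$ for $\tau$ large, so the initial trial $\alpha=1$ is accepted. The delicate parts are carrying the $\eta_1$-weighted constraint terms through this cancellation and verifying that the constants match $\epsilon_{\max}$.
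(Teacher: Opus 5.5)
Your proposal is correct and follows essentially the same route as the paper. Both arguments use a second-order Taylor expansion of the frozen merit function and the split $\tilde{\bm s}^\tau=\bm s^\tau+\bm e^\tau$. Both then use $\nabla\mathcal L_{\bm\eta}=P\nabla\mathcal L$ together with $\nabla^2\mathcal L_{\bm\eta}=PK+o(1)$ and $\|K-\Gamma\|=o(1)$ to reduce the Newton part to $\tfrac12(\nabla\mathcal L_{\bm\eta}^\tau)^T\tilde{\bm s}^\tau$, and absorb the $\bm e^\tau$-terms via $\epsilon^\tau\le\epsilon_{\max}^\tau$, $\|(\Gamma^\tau)^{-1}\|\le\tfrac{7}{20}\Psi^\tau$ and the descent condition.

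Two slips do not affect the argument. First, under the paper's $C^3$ hypothesis the Taylor remainder is only $o(\|\tilde{\bm s}^\tau\|^2)$, not cubic, because third derivatives already appear in $\nabla^2\mathcal L_{\bm\eta}$; the $o$ bound is all you actually use. Second, the exact factor is $P=\bigl(\begin{smallmatrix}\mathbb I+\eta_2H&\eta_1G^T\\ \eta_2G&\mathbb I\end{smallmatrix}\bigr)$, whereas your expression carries an extra $\eta_2G^T$ in the upper-right block.
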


With the unit step size established, we now characterize the local rate of contraction. By Lemma~\ref{lem:stability}, once the parameters have settled the accuracy tolerance is frozen at $\epsilon^\tau = \epsilon^{\bar\tau} < 1$ for all $\tau \geq \bar\tau$. This $\epsilon^\tau$ directly gives the contraction factor for the rate.

\begin{theorem}[Local linear convergence]\label{thm:local-linear}
Suppose Assumptions~\ref{assumption:uniform-lower-bound-hessian}--\ref{assumption:local-lipschitz} hold with $\mu \geq \bar\mu$, and let $\epsilon^{\bar\tau} < 1$ be the frozen accuracy tolerance of Lemma~\ref{lem:stability}. Then, for every $\varphi \in (0,\, 1/\epsilon^{\bar\tau} - 1)$ and all sufficiently large $\tau$,
\begin{align*}
&\big\|(\bm{z}^{\tau+1} - \bm{z}^\ast,\, \bm{\lambda}^{\tau+1} - \bm{\lambda}^\ast)\big\|
\\ &\qquad \leq (1 + \varphi)\,\epsilon^{\bar\tau}\,
\big\|(\bm{z}^{\tau} - \bm{z}^\ast,\, \bm{\lambda}^{\tau} - \bm{\lambda}^\ast)\big\|\,,
\end{align*}
so the iterates converge linearly with rate $(1+\varphi)\epsilon^{\bar\tau} < 1$.
\end{theorem}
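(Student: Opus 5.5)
We would prove Theorem~\ref{thm:local-linear} by splitting the one-step error into an exact-Newton part and an inexactness part. We work for $\tau\ge\max\{\bar\tau,\tau_1\}$, where $\bar\tau$ is from Lemma~\ref{lem:stability} and $\tau_1$ is from Lemma~\ref{lem:unit-step-size}. In this regime $\alpha^\tau=1$ and all parameters are frozen, so $\epsilon^\tau=\epsilon^{\bar\tau}<1$. Write $\bm w^\tau=(\bm z^\tau;\bm\lambda^\tau)$, $\bm w^*=(\bm z^*;\bm\lambda^*)$, and $\Delta\bm w^\tau$ for the exact Newton direction of \eqref{eq:newtonSystem}. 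Then
\begin{align*}
\bm w^{\tau+1}-\bm w^* = \bigl(\bm w^\tau+\Delta\bm w^\tau-\bm w^*\bigr) + \bigl(\Delta\tilde{\bm w}^\tau-\Delta\bm w^\tau\bigr).
\end{align*}
We bound the two brackets separately.

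\emph{Exact part.} Since $\nabla\mathcal L(\bm w^*)=0$, we have $\bm w^\tau+\Delta\bm w^\tau-\bm w^*=(\Gamma^\tau)^{-1}\bigl[\Gamma^\tau(\bm w^\tau-\bm w^*)-(\nabla\mathcal L(\bm w^\tau)-\nabla\mathcal L(\bm w^*))\bigr]$. The bracket equals the Taylor remainder $\int_0^1[\nabla^2\mathcal L(\bm w^\tau)-\nabla^2\mathcal L(\bm w^*+s(\bm w^\tau-\bm w^*))]\,ds\,(\bm w^\tau-\bm w^*)$ plus the term $\diag(\hat H^\tau-H^\tau,0)(\bm w^\tau-\bm w^*)$. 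Here $\nabla^2\mathcal L$ is the full KKT matrix, whose blocks are $H$ and $G$. Lemma 5.1 of \cite{naFOTD}, quoted above, gives a uniform bound $\|(\Gamma^\tau)^{-1}\|\le\Upsilon_{\rm inv}$. Assumption~\ref{assumption:local-lipschitz} bounds the Taylor remainder by $O(\|\bm w^\tau-\bm w^*\|^2)$, and Assumption~\ref{assumption:modified-hessian-vanishes} makes the Hessian-modification term $o(1)\|\bm w^\tau-\bm w^*\|$. So the exact part is $o(\|\bm w^\tau-\bm w^*\|)$.

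\emph{Inexact part.} This is the main obstacle: we need a contraction factor of exactly $\epsilon^{\bar\tau}$ with no extra multiplicative constant. We use the accuracy condition \eqref{eq:globalResidualBound} directly rather than Lemma~\ref{lem:global-residual-error}. Because $\Gamma^\tau(\Delta\tilde{\bm w}^\tau-\Delta\bm w^\tau)=\bm r$, we get
\begin{align*}
\|\Delta\tilde{\bm w}^\tau-\Delta\bm w^\tau\|\le\|(\Gamma^\tau)^{-1}\|\,\frac{\epsilon^{\bar\tau}\|\nabla\mathcal L^\tau\|}{\|\Gamma^\tau\|\Psi^\tau}.
\end{align*}
Next, $\|\nabla\mathcal L^\tau\|=\|\nabla\mathcal L(\bm w^\tau)-\nabla\mathcal L(\bm w^*)\|\le(\|\Gamma^\tau\|+o(1))\|\bm w^\tau-\bm w^*\|$, using Assumptions~\ref{assumption:modified-hessian-vanishes}--\ref{assumption:local-lipschitz} and convergence of the iterates. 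It remains to show $\|(\Gamma^\tau)^{-1}\|\le\Psi^\tau$. We would prove this with the standard null-space inverse bound. Decompose $(\Gamma^\tau)^{-1}$ using $Z^\tau$ and the range part of $G^\tau$. Each block is bounded using $(Z^\tau)^T\hat H^\tau Z^\tau\succeq\gamma_{\rm RH}\mathbb I$ (Assumption~\ref{assumption:uniform-lower-bound-hessian}), $\|(G^\tau)^\dagger\|=1/\sigma_{\min}^\tau$, and $\|\hat H^\tau\|$. The resulting block norms are at most $1/\gamma_{\rm RH}$, $(1+\|\hat H\|/\gamma_{\rm RH})/\sigma_{\min}$, and $\|\hat H\|(1+\|\hat H\|/\gamma_{\rm RH})/\sigma_{\min}^2$. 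Summing these gives a total of at most $20\max(\|\hat H\|^2,1)/(\min\{\gamma_{\rm RH},1\}\min\{\sigma_{\min}^2,1\})=\Psi^\tau$; the constant $20$ in $\Psi^\tau$ is there for exactly this purpose. We then obtain
\begin{align*}
\|\Delta\tilde{\bm w}^\tau-\Delta\bm w^\tau\|\le\epsilon^{\bar\tau}(1+o(1))\|\bm w^\tau-\bm w^*\|.
\end{align*}
If the constant bookkeeping in this inverse bound fails, we would fall back on a weaker rate $C\epsilon^{\bar\tau}$. We expect this verification to be the delicate step.

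\emph{Conclusion.} Combining the two parts gives $\|\bm w^{\tau+1}-\bm w^*\|\le(\epsilon^{\bar\tau}+o(1))\|\bm w^\tau-\bm w^*\|$. Given $\varphi\in(0,1/\epsilon^{\bar\tau}-1)$, choose $\tau$ large enough that the $o(1)$ term is at most $\varphi\epsilon^{\bar\tau}$. The rate is then $(1+\varphi)\epsilon^{\bar\tau}$, which is $<1$ by the choice of $\varphi$.
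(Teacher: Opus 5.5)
Your proposal is correct and follows essentially the same route as the paper's proof. You use the same split into an exact-Newton part that is $o(\|\bm w^\tau-\bm w^\ast\|)$, obtained from the Taylor remainder and Assumption~\ref{assumption:modified-hessian-vanishes}, plus an inexactness part controlled by the residual test through $\|(\Gamma^\tau)^{-1}\|\le\Psi^\tau$. Your block bounds sum to exactly the paper's estimate $\|(\Gamma^\tau)^{-1}\|\le\tfrac{7}{20}\Psi^\tau$ in \eqref{eq:lemma3-kkt-inv-bound}, so the bookkeeping you flagged as delicate does go through.

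The only difference is cosmetic. The paper first bounds the inexactness by $\epsilon^{\bar\tau}\|\Delta\bm w^\tau\|$ and then uses $\|\Delta\bm w^\tau\|\le\|\bm w^\tau-\bm w^\ast\|+\|\bm w^\tau+\Delta\bm w^\tau-\bm w^\ast\|$, which reuses the exact-part bound. You instead bound $\|\nabla\mathcal L^\tau\|$ directly by a mean-value estimate. That additionally needs $\|\Gamma^\tau\|$ bounded away from zero, which holds because $G^\tau$ contains identity blocks, so $\|\Gamma^\tau\|\ge 1$.
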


In contrast to \cite[Theorem 6.3]{naFOTD}, which assumes exact solutions of all local KKT systems and establishes uniform stagewise linear convergence, Theorem~\ref{thm:local-linear} establishes linear convergence in the $\ell_2$ norm of the full primal--dual iterate $(\bm z^\tau,\bm\lambda^\tau)$. This is due to the fact that when one solves each local KKT linear system exactly, its only error source is the boundary perturbation, which has a stagewise sensitivity structure. In contrast, AOTD additionally introduces inexact local KKT solves controlled by aggregate per-subproblem tolerances, not stagewise ones. Hence, one could adapt Theorem \ref{thm:local-linear} to obtain uniform stagewise convergence, but this would still depend on the subproblem values $b_i^\tau$, $\epsilon_i^\tau$ for each $i$.

\section{Numerical Illustrations} \label{sec:simulations}

We evaluate AOTD on two challenging nonlinear OCPs to examine the reliability and computational benefit of adapting overlap and local solve accuracy. All runs target a KKT residual threshold $\|\nabla\mathcal L\|\leq10^{-6}$ for convergence over five perturbed initial conditions. We compare with IPOPT, multiple shooting~\cite{BOCK19841603}, ADMM~\cite{8186925}, Schwarz~\cite{9840913}, and FOTD~\cite{naFOTD}. Moreover, in the implementation of AOTD, we set the factors $\Psi^\tau=1$ and $\Upsilon^\tau=1$ for all $\tau$ as a practical estimate and find the scheme converges without issue. The code is publicly available in~\cite{bhan_repo}. 

\subsection{Nonlinear Frequency Control for Stability of Power Grids}
\label{subsec:swing_numerical}

\begin{figure*}[t]
\centering
\includegraphics[width=0.85\linewidth]{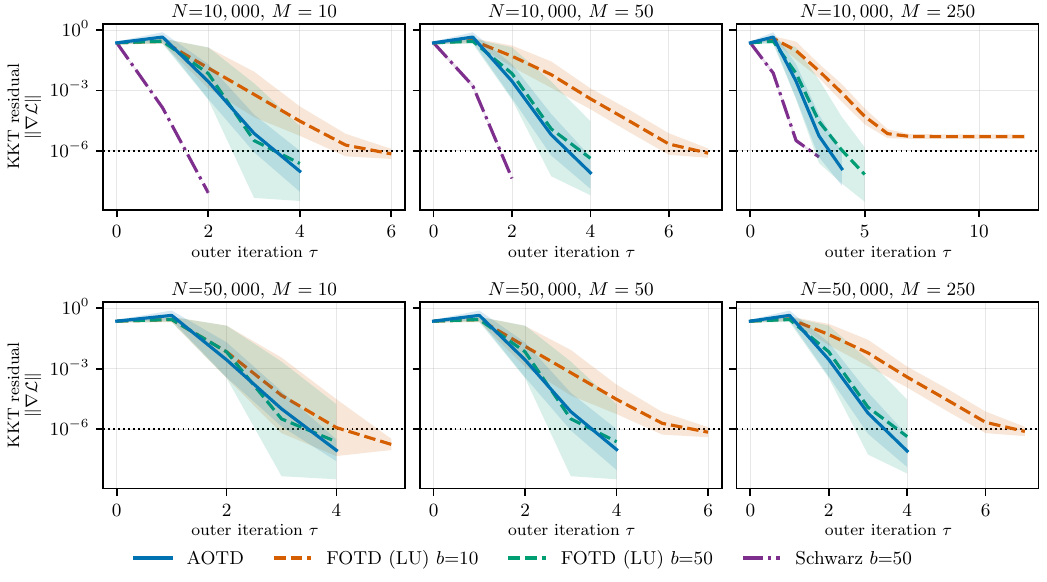}
\caption{KKT convergence on the Burgers OCP across subproblem counts $M$ and horizons $N$ (top: $N{=}10{,}000$, bottom: $N{=}50{,}000$). The solid line shows the mean over $5$ seeds and the dotted line marks the $10^{-6}$ tolerance. }
\label{fig:burgers_convergence}
\end{figure*}
We consider frequency regulation on the nonlinear Kron-reduced New England 39-bus (NE39) network with $n_g=10$ generators~\cite{8619580,10543148,10820007}. We refer to this benchmark as the NE39 swing OCP. Its swing dynamics are
\begin{align*}
M_i^{ \rm swng}\ddot\theta_i+D_i^{\rm swng}\dot\theta_i
=p_{\mathrm{in},i}-\sum_{j\in\mathcal N_i}b_{ij}^{\rm swng}\sin(\theta_i-\theta_j),
\end{align*}
where $\theta_i$ is the phase angle, $M_i^{\rm swng}$ the inertia, $D_i^{\rm swng}$ the damping, $p_{\mathrm{in},i}$ the controlled power injection, and $b_{ij}^{\rm swng}$ the susceptance to neighbor $j\in\mathcal N_i$. With frequency deviation $\dot\theta_i$, the state is $\bm{x}=(\{\theta\}_i; \{\dot{\theta}\}_i)$, and the input is $\bm u=\{p_{\mathrm{in, i}}\}_i$. Forward Euler gives
\begin{align}
\min_{\{\bm x_k,\bm u_k\}}\quad
&\sum_{k=0}^{N-1}\bigl(\bm x_k^\top Q_k^{\rm cost}\bm x_k+\bm u_k^\top R_k^{\rm cost}\bm u_k\bigr)\nonumber\\
&\quad+\bm x_N^\top Q_N^{\rm cost}\bm x_N, \notag\\
\text{s.t.}\quad &\bm\theta_{k+1}=\bm\theta_k+\Delta t\,\dot{\bm\theta}_k, \notag\\
&\dot{\bm\theta}_{k+1}=\dot{\bm\theta}_k+\Delta t (M^{\rm swng})^{-1}\bigl(\bm u_k \nonumber \\ & \qquad\qquad \qquad-D^{\rm swng}\dot{\bm\theta}_k-\bm h(\bm\theta_k)\bigr), \notag\\
&\bm x_0=(\bar{\bm\theta}_0;\dot{\bar{\bm\theta}}_0)\,, 
\notag 
\end{align}
where $M^{\rm swng}=\diag(M_i^{\rm swng})$, $D^{\rm swng}=\diag(D_i^{\rm swng})$, and $\bm h(\bm\theta)$ has components $h_i(\bm\theta)=\sum_{j\in\mathcal N_i}b_{ij}^{\rm swng}\sin(\theta_i-\theta_j)$. For $k=0,\ldots,N-1$, the objective weights are $Q_k^{\rm cost}=10I_{20}$ and
$R_k^{\rm cost}=0.05\diag(0.94,1.15,0.55,0.71,\allowbreak 1.27,1.03,1.29,1.02,0.99,0.73)$, with the diagonal coefficients rounded to two decimal places here and given exactly in Table~\ref{tab:experiment-config}. Both weights are constant across time stages, and the terminal weight is $Q_N^{\rm cost}=10I_{20}$. The horizon is $N=1000$, and $\Delta t=0.1$, giving $30{,}020$ primal variables.

\begin{figure*}[t]
\centering
\includegraphics[width=0.85\linewidth]{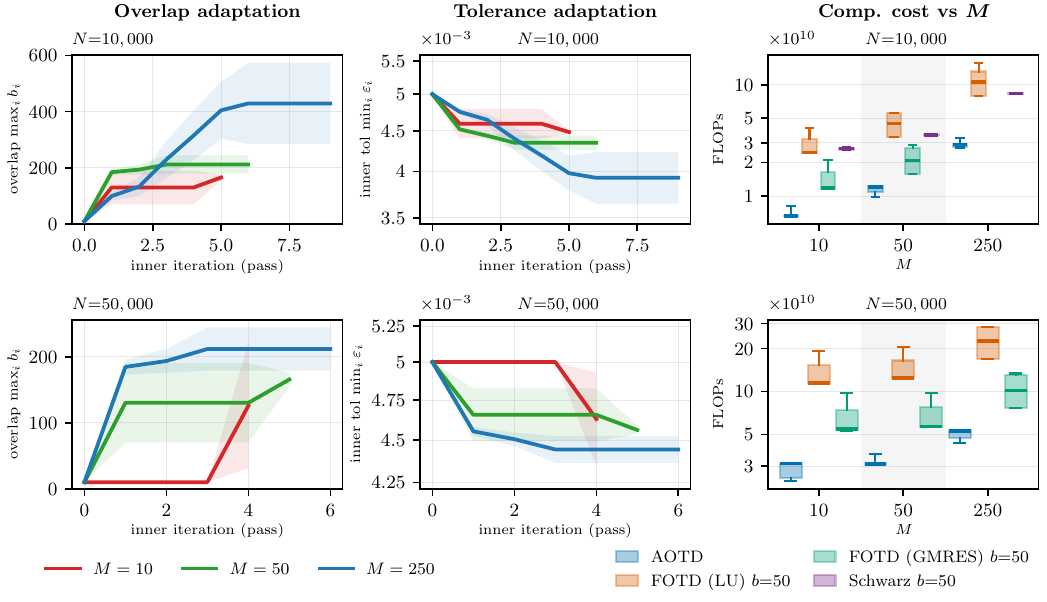}
\caption{Burgers adaptation and cost for $N=10{,}000$ (top) and $N=50{,}000$ (bottom). Left: maximum nominal overlap $\max_i b_i$ over accuracy-loop passes. Center: minimum local tolerance $\min_i\epsilon_i$. Right: estimated FLOPs. AOTD uses deterministic GMRES. At $N=50{,}000$, Schwarz exhausts memory and is omitted.}
\label{fig:burgers_adaptation}
\end{figure*}

Figure~\ref{fig:freq_control_convergence} shows AOTD reaching tolerance across all partitions, whereas fixed-overlap methods depend strongly on the chosen overlap. FOTD with $b=50$ succeeds at $M=4,10,20$ but fails at $M=50$, where increasing to $b=75$ restores convergence. Moreover, we find small-overlap Schwarz, multiple shooting, and ADMM also fail as the partition is refined. Thus, in fixed schemes, the user must experiment with repeated tuning to find the correct overlap for convergence, or choose a sufficiently large overlap that is conservative. In contrast, AOTD selects sufficient overlaps during the solve and converges from the same initialization throughout. Lastly, from Figure \ref{fig:freq_control_convergence} the residual contraction is consistent with the local linear convergence analysis corroborating the theoretical results. 

\begin{table}[H]\centering\tablesize\setlength{\tabcolsep}{3pt}
\fontsize{6.5}{6}\selectfont
\setlength{\aboverulesep}{1pt}\setlength{\belowrulesep}{1pt}
\caption{Estimated FLOPs ($\times 10^{7}$) on the NE39 swing OCP ($N{=}1000$, mean$\pm$stdev over 5 seeds). $M$ is the number of temporal subproblems and dashes denotes non-convergence. Lowest mean estimated FLOPs in each column are bold.}
\label{tab:highlight_swing}
\begin{tabular}{lc|rrrr}
\toprule
Method & $b$ & $M{=}4$ & $M{=}10$ & $M{=}20$ & $M{=}50$ \\
\midrule
IPOPT & --- & \multicolumn{4}{c}{$75.1{\pm}0.0$} \\
\midrule
MultiShoot & --- & $18.9{\pm}0.0$ & --- & --- & --- \\
ADMM ($\chi{=}10$) & --- & $18.8{\pm}0.0$ & $9.6{\pm}4.0$ & --- & --- \\
Schwarz & $10$ & $19.6{\pm}0.0$ & $11.8{\pm}2.8$ & --- & --- \\
& $50$ & $22.6{\pm}0.0$ & $16.6{\pm}4.3$ & $24.9{\pm}4.9$ & $36.3{\pm}0.0$ \\
& $75$ & $24.4{\pm}0.0$ & $19.6{\pm}5.2$ & $33.2{\pm}8.6$ & $67.4{\pm}0.0$ \\
FOTD (LU) & $10$ & $80.0{\pm}0.0$ & --- & --- & --- \\
& $50$ & $98.0{\pm}0.0$ & $143.5{\pm}0.0$ & $219.3{\pm}0.0$ & --- \\
& $75$ & $109.3{\pm}0.0$ & $177.4{\pm}0.0$ & $287.0{\pm}0.0$ & $615.2{\pm}0.0$ \\
FOTD (GMRES) & $10$ & $36.5{\pm}0.6$ & --- & --- & --- \\
& $50$ & $44.6{\pm}0.6$ & $26.0{\pm}0.3$ & $31.9{\pm}0.5$ & --- \\
& $75$ & $49.6{\pm}0.7$ & $32.3{\pm}0.8$ & $52.6{\pm}0.9$ & $91.9{\pm}1.1$ \\
\midrule
AOTD (GMRES) & --- & {\boldmath $10.0{\pm}1.7$} & {\boldmath $7.1{\pm}1.2$} & {\boldmath $9.2{\pm}1.3$} & {\boldmath $33.7{\pm}0.4$} \\
AOTD (sGMRES) & --- & $13.7{\pm}2.3$ & $8.4{\pm}1.1$ & $10.9{\pm}1.7$ & $36.9{\pm}3.7$ \\
\bottomrule
\end{tabular}
\end{table}

Table~\ref{tab:highlight_swing} makes this convergence trade-off precise. We see that AOTD (GMRES) has the lowest mean estimated FLOPs in every column, while increasing fixed overlaps in FOTD and Schwarz to ensure convergence can be expensive for various subproblem decompositions. For example, at $M=50$, FOTD (GMRES) requires an estimated $91.9\times10^7$ FLOPs at $b=75$, compared with $33.7\times10^7$ for AOTD. Lastly, both GMRES and a sketched form of GMRES (sGMRES) converge with similar estimated FLOP counts throughout. This illustrates that the acceptance test accommodates different local solvers. We now move to a larger scale problem in the Burgers PDE.

\subsection{Optimal Control of a Burgers PDE}\label{subsec:burgers_numerical}

We next consider regulation of the viscous Burgers equation~\cite{HinzePinnauUlbrichUlbrich2009,doi:10.1137/23M1566935,hao2023bilevel}, with state $y$, distributed control $w$, viscosity $\kappa>0$, control weight $\psi>0$, and target $y_{\rm des}=0$:
\begin{align*}
\min_{y,w}\quad
&\frac12\int_0^T\!\bigl(\|y(\cdot,t)-y_{\rm des}\|_{L^2}^2
+\psi\|w(\cdot,t)\|_{L^2}^2\bigr)\,dt \\
&\quad+\frac12\|y(\cdot,T)-y_{\rm des}\|_{L^2}^2, \\
\text{s.t.}\quad
&\partial_t y+y\partial_\xi y=\kappa\partial_{\xi\xi}y+w, \\
&y(0,t)=y(1,t)=0,\qquad y(\xi,0)=y_0(\xi).
\end{align*}
The dynamics hold on $(\xi,t)\in(0,1)\times(0,T]$. We discretize the dynamics using IMEX Euler, treating diffusion implicitly and convection and control explicitly~\cite{ascher1995imex}. The discretization has $n_x=n_u=24$ and $N\in\{10{,}000,50{,}000\}$. Hence, this problem has $2{,}400{,}024$ (at $N=50{,}000$) primal variables.

Figure~\ref{fig:burgers_convergence} shows AOTD reaching tolerance at all horizons and decomposition sizes. Meanwhile, at $N=10{,}000$, $M=250$, FOTD (LU) fails with $b=10$ and succeeds with $b=50$ and at $N=50{,}000$, $M=250$, FOTD (LU) succeeds with $b=10$, but FOTD (GMRES) fails all seeds. Moreover, for $N=50{,}000$, we find that both the Schwarz scheme and IPOPT exhaust the compute resources and only report results at $N=10{,}000$. However, at this horizon, IPOPT requires an estimated $44.8{\times}10^{9}$ FLOPs, roughly $6.5\times$ the estimated FLOP count of AOTD at $M=10$. Lastly, the multiple shooting fails to converge for both horizons for at least one seed.

\begin{table}[H]\centering\tablesize\setlength{\tabcolsep}{3pt}
\fontsize{6.5}{6}\selectfont
\setlength{\aboverulesep}{1pt}\setlength{\belowrulesep}{1pt}
\caption{Estimated FLOPs ($\times 10^{9}$) on the viscous Burgers OCP (mean$\pm$stdev over 5 seeds). $M$ is the number of temporal subproblems and dashes denote non-convergence. Lowest mean estimated FLOPs in each column are bold. At $N{=}50{,}000$, Schwarz exhausts memory and is omitted.}
\label{tab:burgers_highlight}
\resizebox{\columnwidth}{!}{%
\begin{tabular}{lc|rrr}
\toprule
Method & $b$ & $M{=}10$ & $M{=}50$ & $M{=}250$ \\
\midrule
\multicolumn{5}{l}{\emph{$N = 10{,}000$ \quad (core lengths $1000/200/40$)}} \\
\midrule
Schwarz & $50$ & $26.6{\pm}0.6$ & $35.4{\pm}0.3$ & $83.3{\pm}0.4$ \\
FOTD (LU) & $10$ & $42.9{\pm}6.1$ & $54.6{\pm}6.6$ & -- \\
& $50$ & $29.5{\pm}6.6$ & $44.9{\pm}10.0$ & $110.6{\pm}30.7$ \\
FOTD (GMRES) & $10$ & -- & -- & -- \\
& $50$ & $14.5{\pm}3.8$ & $21.6{\pm}5.5$ & -- \\
AOTD (GMRES) & --- & {\boldmath $6.9{\pm}0.6$} & {\boldmath $11.5{\pm}1.0$} & {\boldmath $29.3{\pm}2.2$} \\
AOTD (sGMRES) & --- & $8.2{\pm}1.1$ & $13.8{\pm}1.2$ & $38.0{\pm}2.6$ \\
\midrule[\heavyrulewidth]
\multicolumn{5}{l}{\emph{$N = 50{,}000$ \quad (core lengths $5000/1000/200$)}} \\
\midrule
FOTD (LU) & $10$ & $165.9{\pm}18.5$ & $214.6{\pm}30.7$ & $273.5{\pm}33.1$ \\
& $50$ & $137.7{\pm}30.6$ & $148.6{\pm}33.0$ & $225.6{\pm}50.4$ \\
FOTD (GMRES) & $10$ & -- & -- & -- \\
& $50$ & $66.8{\pm}17.4$ & $69.0{\pm}16.6$ & $103.8{\pm}24.8$ \\
AOTD (GMRES) & --- & {\boldmath $28.4{\pm}3.5$} & {\boldmath $32.0{\pm}2.2$} & {\boldmath $49.9{\pm}3.9$} \\
AOTD (sGMRES) & --- & $34.2{\pm}4.3$ & $36.1{\pm}4.1$ & $56.2{\pm}4.2$ \\
\bottomrule
\end{tabular}}
\end{table}

Figure~\ref{fig:burgers_adaptation} illustrates the effectiveness of favoring overlap growth over aggressive tolerance reduction in this setting. The left and middle columns illustrate the adaptation process associated with Lemma~\ref{lem:stability}: the maximum nominal overlap and minimum local tolerance level off in several configurations, although some traces continue to change over the displayed accuracy-loop passes. Finally, the right column of Figure~\ref{fig:burgers_adaptation} compares estimated FLOPs across methods, and Table~\ref{tab:burgers_highlight} shows that the best converged fixed-overlap method uses $1.9$--$2.8$ times as many estimated FLOPs as AOTD (GMRES) across the six configurations.

Across both OCPs, the fixed-overlap baselines require overlap tuning, and jointly adapting the overlap and local solve accuracy reduces estimated FLOPs.

\section{Conclusion} \label{sec:conclusions}

We developed AOTD, a parallel inexact-SQP method that jointly adapts  the temporal overlap and local solve accuracy. Under the stated assumptions, its accuracy and descent tests terminate finitely, its adaptive parameters stabilize, and its KKT residual converges to zero (Theorem~\ref{thm:global}). If the primal--dual iterates converge to a strict local KKT solution, we additionally establish eventual unit steps and local linear convergence (Theorem~\ref{thm:local-linear}). On the challenging power-grid and Burgers OCPs, AOTD converged across all tested partitions without prescribing a sufficient final overlap, with the best converged fixed-overlap comparators requiring $1.9$--$2.8$ times as many estimated FLOPs on Burgers.


\bibliography{references}
\bibliographystyle{plain}
\allowdisplaybreaks[0] 

\clearpage
\appendix
\setcounter{equation}{0}
\renewcommand{\theequation}{A.\arabic{equation}}

\section{Proofs in Section \ref{sec:global-convergence}}\label{appendix:proofs1}
\subsection{Proof of Lemma \ref{lem:global-residual-error}}
\begin{proof}
We will suppress the iteration index $\tau$ throughout the proof, retaining it only on the fixed iterate $\bm{x}_0^\tau$. Let $(\Delta \bm{z}, \Delta \bm{\lambda})$ be the exact solution to the global Newton system \eqref{eq:newtonSystem}. For each subproblem $i \in [M-1]$, define $(\Delta \tilde{\bm{\omega}}^\ast_i, \Delta \tilde{\bm{\zeta}}_i^\ast)$ to be the exact solution of the local quadratic subproblem $\mathcal{LP}_{\mu}^i(\bm{d}_i)$ where $\bm{d}_i = (0;0;0;0)$ is the boundary condition (with $\bm{d}_{i,1} = \bar{\bm{x}}_0 - \bm{x}_0^\tau$ for every subproblem with $m_1^i = 0$). Then, let $(\Delta \tilde{\bm{z}}^\ast, \Delta \tilde{\bm{\lambda}}^\ast) =  \mathcal{C}\left(\{(\Delta \tilde{\bm{\omega}}_i^\ast, \Delta \tilde{\bm{\zeta}}_i^\ast )\}_{i=0}^{M-1} \right)$ and similarly, $(\Delta \tilde{\bm{z}}, \Delta \tilde{\bm{\lambda}}) =  \mathcal{C}\left(\{(\Delta \tilde{\bm{\omega}}_i, \Delta \tilde{\bm{\zeta}}_i)\}_{i=0}^{M-1} \right)$ be the concatenations of the exact and inexact local solutions of the subproblems. By the triangle inequality, we have
\begin{align}
 \|(\Delta \tilde{\bm{{z}}} - \Delta \bm{z}, \Delta \tilde{\bm{\lambda}} - \Delta \bm{\lambda})\|  \leq&\; \|(\Delta \tilde{\bm{{z}}} - \Delta \tilde{\bm{z}}^\ast, \Delta \tilde{\bm{\lambda}} - \Delta \tilde{\bm{\lambda}}^\ast )\|  \nonumber   \\
&  + \|(\Delta \tilde{\bm{{z}}}^\ast - \Delta \bm{z}, \Delta \tilde{\bm{\lambda}}^\ast - \Delta \bm{\lambda})\| . \label{eq:lemma-1-triangle-inequality}
\end{align}
These two terms can be handled separately. Since $\mu \geq \bar{\mu}$ every subproblem is uniformly well posed \cite[Lemma 4.2, Corollary 4.1]{naFOTD}, and we begin with the first (inexactness) term, returning to the second (boundary) term at the end using the same sensitivity estimate.

To handle the first term of \eqref{eq:lemma-1-triangle-inequality}, note that by definition, the exact solution satisfies
\begin{align*}
\Gamma_i \begin{pmatrix} \Delta \tilde{\bm{\omega}}_i^\ast \\ \Delta \tilde{\bm{\zeta}}_i^\ast \end{pmatrix} = - \nabla \mathcal{L}_{\rm QP}^{(i)}\,,
\end{align*}
where $\nabla \mathcal{L}_{\rm QP}^{(i)}$ is the fixed local KKT right-hand side of \eqref{eq:newton} (with the iteration index $\tau$ suppressed). Hence, subtracting this identity from the definition of the local residual \eqref{eq:local-residual-def}, evaluated at the inexact solution $(\Delta \tilde{\bm{\omega}}_i, \Delta \tilde{\bm{\zeta}}_i)$, gives
\begin{align}
& \bm{\tilde{r}}_i(\Delta \tilde{\bm{\omega}}_i, \Delta \tilde{\bm{\zeta}}_i) = \Gamma_i\begin{pmatrix}
\Delta \tilde{\bm{\omega}}_i - \Delta \tilde{\bm{\omega}}_i^\ast  \\ \Delta \tilde{\bm{\zeta}}_i - \Delta \tilde{\bm{\zeta}}_i^\ast
\end{pmatrix} \nonumber\,, 
\end{align}
implying
\begin{align}
 \left\| \begin{pmatrix}
\Delta \tilde{\bm{\omega}}_i - \Delta \tilde{\bm{\omega}}_i^\ast  \\ \Delta \tilde{\bm{\zeta}}_i - \Delta \tilde{\bm{\zeta}}_i^\ast
\end{pmatrix} \right\| \leq \|\Gamma_i^{-1}\| \|\bm{\tilde{r}}_i(\Delta \tilde{\bm{\omega}}_i, \Delta \tilde{\bm{\zeta}}_i)\|. \label{eq:lemma-1-local-residual-bound}
\end{align}
By \cite[Lemma 4.2 and Corollary 4.1]{naFOTD}, the local KKT matrices inherit uniform reduced-Hessian positivity, Jacobian conditioning, and matrix-norm bounds from Assumptions \ref{assumption:uniform-lower-bound-hessian}--\ref{assumption:uniform-boundedness}. Hence, applying the explicit inverse bound of \cite[Lemma 5.1]{naFOTD} at the subproblem level yields $\|\Gamma_i^{-1}\| \leq C_{\Gamma^{-1}_{\rm loc}}$ with $C_{\Gamma^{-1}_{\rm loc}}$ independent of $\tau$ and $i$. Combining this with the local residual condition \eqref{eq:local-residual-cond} and \eqref{eq:lemma-1-local-residual-bound} yields
\begin{align*}
\left\| \begin{pmatrix}
\Delta \tilde{\bm{\omega}}_i - \Delta \tilde{\bm{\omega}}_i^\ast  \\ \Delta \tilde{\bm{\zeta}}_i - \Delta \tilde{\bm{\zeta}}_i^\ast 
\end{pmatrix} \right\|  \leq C_{\Gamma^{-1}_{\rm loc}} \epsilon_i \left\| \nabla \mathcal{L}_{\rm QP}^{(i)} \right\|\,.
\end{align*}
Using the definition of $\nabla \mathcal{L}_{\rm QP}^{(i)}$ yields
\begin{align}
\left\| \nabla \mathcal{L}_{\rm QP}^{(i)} \right\| \leq \|\Gamma_i\| \left\|\begin{pmatrix} \Delta \tilde{\bm{\omega}}_i^\ast \\ \Delta \tilde{\bm{\zeta}}_i^\ast \end{pmatrix}\right\|\,.  \label{eq:lemma-1-subproblem-qp-bound}
\end{align}
Since $\mu$ is fixed, the terminal block $\hat{Q}_{m_2}+\mu I$ is uniformly bounded. Moreover, Assumption~\ref{assumption:uniform-boundedness}, together with the fixed block-banded structure of the local Hessian and Jacobian, implies that there exists a constant $C_{\Gamma_{\rm loc}}>0$, independent of $\tau$, $i$, and the window length, such that $\|\Gamma_i\|\leq C_{\Gamma_{\rm loc}}$.

Now, since we use $\bm{d}_i = (0; 0; 0;0)$ on the interface components, there is a local error between this choice and the correct choice corresponding to the solution of the global SQP problem (for any subproblem with $m_1^i = 0$ the initial condition $\bm{d}_{i,1} = \bar{\bm{x}}_0 - \bm{x}_0^\tau$ is already the correct one). Let $\Delta \bm{d}_i = (\Delta \bm{x}_{m_1}; \Delta \bm{x}_{m_2}; \Delta \bm{u}_{m_2}; \Delta \bm{\lambda}_{m_2+1})$ when $m_2 < N$, and $\Delta \bm{d}_i = \Delta \bm{x}_{m_1}$ when $m_2 = N$ (matching the boundary variables of \eqref{eq:decomposed-main-problem}), be the correct boundary condition for the SQP step at $\tau$. Then, the decomposed solution of $(\Delta \bm{\omega}_i, \Delta \bm{\zeta}_i) = \mathcal{D}_i(\Delta \bm{z}, \Delta \bm{\lambda})$ is exactly that of the local subproblem solved exactly with  $\Delta \bm{d}_i$. We can apply the sensitivity estimate of \cite[Theorem 4.1]{naFOTD}, whose constants $C_{d} > 0$ and $\rho_d \in (0, 1)$ are built from the uniform truncated-problem constants above via the exponential decay of sensitivity \cite{doi:10.1137/19M1265065}, and are hence independent of $i$ and $\tau$. Then, for every $k \in [m_1, m_2]$,
\begin{align}
\max\{\|\Delta \tilde{\bm{\omega}}_{i, k}^\ast &- \Delta \bm{\omega}_{i, k} \|\,, \|\Delta \tilde{\bm{\zeta}}_{i, k}^\ast - \Delta \bm{\zeta}_{i, k} \| \}   \nonumber\\
   \leq&\; C_d \bigg(\rho_d^{k-m_1} \|\Delta \bm{x}_{m_1}\| \nonumber \\ &+ \rho_d^{m_2-k} \bigg\| ( \Delta \bm{x}_{m_2}; \Delta \bm{u}_{m_2}; \Delta \bm{\lambda}_{m_2+1}) \bigg\| \bigg) \,, \label{eq:lemma-1-estimate}     
\end{align}
where the left (respectively right) boundary term is omitted when $m_1 = 0$ (respectively $m_2 = N$) since for $m_1 = 0$ the correcting boundary $\bm{d}_{i,1} = \bar{\bm{x}}_0 - \bm{x}_0^\tau$ makes the initial boundary exact, and for $m_2 = N$ the true terminal cost $g_N$ carries no boundary perturbation. Because the sensitivity constants depend only on the uniform stagewise constants, the cited estimate applies uniformly to every clipped window generated by the adaptive overlap rule.

Now, noting that for all $i$, the right hand vectors are subvectors of the full Newton step. Here, the two boundary blocks $\Delta\bm{x}_{m_1}$ and $(\Delta\bm{x}_{m_2};\Delta\bm{u}_{m_2};\Delta\bm{\lambda}_{m_2+1})$ are disjoint subvectors of the full Newton step, so $\|\Delta\bm{x}_{m_1}\|^2 + \|(\Delta\bm{x}_{m_2};\Delta\bm{u}_{m_2};\Delta\bm{\lambda}_{m_2+1})\|^2 \leq \|(\Delta\bm{z},\Delta\bm{\lambda})\|^2$. Hence, squaring \eqref{eq:lemma-1-estimate}, applying $(a+b)^2 \leq 2a^2 + 2b^2$, summing the two components $(\bm{\omega},\bm{\zeta})$ and the resulting geometric series, and taking the square root, we obtain
\begin{align}
\|(\Delta \tilde{\bm{\omega}}_i^\ast, \Delta \tilde{\bm{\zeta}}_i^\ast ) - (\Delta \bm{\omega}_i , \Delta \bm{\zeta}_i)\| \leq \frac{2C_d}{\sqrt{1-\rho_d^2}} \|(\Delta \bm{z}, \Delta \bm{\lambda})\|\,.  \label{eq:lemma-1-sensitive-bound}
\end{align}
Using the fact that the restricted global direction is bounded by the full direction, $\|(\Delta \bm{\omega}_i, \Delta \bm{\zeta}_i)\| \leq \|(\Delta \bm{z}, \Delta \bm{\lambda})\|$, along with \eqref{eq:lemma-1-subproblem-qp-bound}, \eqref{eq:lemma-1-sensitive-bound}, we obtain
\begin{align*}
\left\| \begin{pmatrix}
\Delta \tilde{\bm{\omega}}_i - \Delta \tilde{\bm{\omega}}_i^\ast  \\ \Delta \tilde{\bm{\zeta}}_i - \Delta \tilde{\bm{\zeta}}_i^\ast
\end{pmatrix} \right\|  \leq&\; \epsilon_i C_{\rm sub} \|(\Delta \bm{z}, \Delta \bm{\lambda})\|\,, \\
C_{\rm sub} =&\;  C_{\Gamma^{-1}_{\rm loc}} C_{\Gamma_{\rm loc}} \left(1 + \frac{2C_d}{\sqrt{1 - \rho_d^2}} \right)\,.
\end{align*}
To finish the bound on the first term, notice that by definition,
\begin{align}
 \|(\Delta \tilde{\bm{{z}}} - \Delta \tilde{\bm{z}}^\ast, &\Delta \tilde{\bm{\lambda}} - \Delta \tilde{\bm{\lambda}}^\ast )\| ^2  \nonumber\\
 \leq&\; \sum_{i=0}^{M-1}\left\|\begin{pmatrix} \Delta \tilde{\bm{\omega}}_i - \Delta \tilde{\bm{\omega}}_i^\ast \\ \Delta \tilde{\bm{\zeta}}_i - \Delta \tilde{\bm{\zeta}}_i^\ast  \end{pmatrix} \right\|^2   
\nonumber \\ 
 \leq&\; C_{\rm sub}^2 \sum_{i=0}^{M-1} \epsilon_i^2 \|(\Delta \bm{z}, \Delta \bm{\lambda})\|^2\,. \label{eq:lemma-1-first-term-bound}    
\end{align}
We now return to the second (boundary) term of \eqref{eq:lemma-1-triangle-inequality}. By the definition of the composition operator, the composition takes each core from its own subproblem, so the error decomposes exactly over the cores:
\begin{align}
\nonumber  &\|(\Delta \tilde{\bm{{z}}}^\ast - \Delta \bm{z}, \Delta \tilde{\bm{\lambda}}^\ast - \Delta \bm{\lambda})\|^2  \\
&\quad = \sum_{i=0}^{M-1}\, \sum_{k \in \mathcal{K}_i} \left( \|\Delta \tilde{\bm{\omega}}_{i,k}^\ast - \Delta \bm{\omega}_{i,k}\|^2 + \|\Delta \tilde{\bm{\zeta}}_{i,k}^\ast - \Delta \bm{\zeta}_{i,k}\|^2 \right),\label{eq:lemma-1-core-decomposition} 
\end{align}
where the core intervals $\mathcal K_i$ are defined in Section~\ref{sec:preliminaries}. Every core stage lies at distance at least the nominal overlap $b_i$ from each \emph{interior} interface of subproblem $i$: by \eqref{eq:effective-overlaps} the clipping binds only when the corresponding end reaches the horizon, so every interior side satisfies $b_i^{S} = b_i$, whence $k - m_1^i \geq b_i^{L} = b_i$ when $m_1^i > 0$ and $m_2^i - k \geq b_i^{R} = b_i$ when $m_2^i < N$. Hence, squaring the per-stage estimate \eqref{eq:lemma-1-estimate}, applying $(a+b)^2 \leq 2a^2+2b^2$, and summing the geometric series over the core stages only, which now starts at distance $b_i$, yields
\begin{align*}
&\sum_{k \in \mathcal{K}_i} \left( \|\Delta \tilde{\bm{\omega}}_{i,k}^\ast - \Delta \bm{\omega}_{i,k}\|^2 + \|\Delta \tilde{\bm{\zeta}}_{i,k}^\ast - \Delta \bm{\zeta}_{i,k}\|^2 \right) \\
& \leq \frac{4C_d^2\,\rho_d^{\,2b_i}}{1-\rho_d^2}\bigg(\|\Delta\bm{x}_{m_1^i}\|^2  + \|(\Delta\bm{x}_{m_2^i};\Delta\bm{u}_{m_2^i};\Delta\bm{\lambda}_{m_2^i+1})\|^2\bigg), 
\end{align*}
where a boundary contribution is omitted when the corresponding end is exact ($m_1^i = 0$ or $m_2^i = N$), so that the $i$th bound vanishes entirely when the window covers the full horizon. Since the two boundary blocks are subvectors of the full Newton step, the parenthesized factor is at most $\|(\Delta \bm{z}, \Delta \bm{\lambda})\|^2$. Hence, substituting into \eqref{eq:lemma-1-core-decomposition} and taking square roots gives
\begin{align}
&\|(\Delta \tilde{\bm{{z}}}^\ast - \Delta \bm{z}, \Delta \tilde{\bm{\lambda}}^\ast - \Delta \bm{\lambda})\|  \nonumber \\
&\quad \leq \frac{2C_d}{\sqrt{1-\rho_d^2}} \left(\sum_{i=0}^{M-1}\rho_d^{\,2b_i}\right)^{\frac{1}{2}}\|(\Delta \bm{z}, \Delta \bm{\lambda})\|\,. \label{eq:lemma-1-second-term-bound}
\end{align}
Taking square roots of \eqref{eq:lemma-1-first-term-bound} and combining it with \eqref{eq:lemma-1-second-term-bound}, \eqref{eq:lemma-1-triangle-inequality} completes the result with $\rho := \rho_d$, $C_1 := \tfrac{2C_d}{\sqrt{1-\rho_d^2}}$, and $C_2 := C_{\rm sub}$.
\end{proof}

\subsection{Proof of Lemma \ref{lem:global-adaptive-well-posed}}

\begin{proof}
Fix the outer iteration $\tau$. Throughout the accuracy loop the outer iterate $(\bm z^\tau, \bm \lambda^\tau)$ and the merit parameters $(\eta_1^\tau, \eta_2^\tau, \epsilon^\tau)$ are held fixed, so $\Gamma^\tau$, $\nabla\mathcal L^\tau$, $\Psi^\tau$ and the exact Newton step $(\Delta \bm z^\tau, \Delta \bm \lambda^\tau)$ are constant. If $\nabla\mathcal L^\tau=\bm0$, Algorithm~\ref{alg:aotd-compact} returns the current KKT point before entering the accuracy loop. Hence, consider an iteration with $\nabla\mathcal L^\tau\neq\bm0$, so that the right-hand side of the global accuracy condition \eqref{eq:globalResidualBound} is strictly positive. The argument is as follows: every time \eqref{eq:globalResidualBound} fails, the update law forces a definite, monotone decrease of the parameter-only bound of Lemma~\ref{lem:global-residual-error}, which cannot fall indefinitely below a fixed positive floor. Thus, the condition is met after finitely many inner iterations.

\emph{Step 1 (uniform progress on every failing pass).} Each full-horizon stage belongs to exactly one subproblem core, so the core residuals $\bm r_i$ defined before \eqref{eq:subproblem-error-update} partition the global residual and $\sum_{i=0}^{M-1}\|\bm r_i\|^2 = \|\bm r\|^2$. Consequently, whenever \eqref{eq:globalResidualBound} fails, $\|\bm r\|$ exceeds its right-hand side $\epsilon^\tau\|\nabla\mathcal L^\tau\|/(\|\Gamma^\tau\|\Psi^\tau)$, that is, $\|\bm r\| > \sqrt M\,\varsigma$, where $\varsigma \coloneq \epsilon^\tau\|\nabla\mathcal L^\tau\|/(\sqrt M\,\|\Gamma^\tau\|\Psi^\tau) > 0$ is a fixed positive floor. Hence, by \eqref{eq:subproblem-error-update}--\eqref{eq:subproblem-overlap-update}, on every failing pass and for every $j$,
\begin{equation}\label{eq:lemma2-blocking-progress} 
\begin{aligned}[c]
\epsilon_j^\tau \;\gets\;& \epsilon_j^\tau\,e^{-\varrho_\epsilon\left(\|\bm r_j\|/\|\bm r\| + 1/\sqrt M\right)} \;\leq\; e^{-\varrho_\epsilon/\sqrt M}\epsilon_j^\tau,  \\
b_j^\tau \;\gets\;& b_j^\tau + \max\left\{1,\ \lceil\varrho_b\|\bm r_j\|/\|\bm r\|\rceil\right\} \;\geq\; b_j^\tau + 1.
\end{aligned}
\end{equation}
That is, on every failing pass \emph{every} subproblem shrinks its tolerance by at least the fixed factor $e^{-\varrho_\epsilon/\sqrt M} < 1$ (the floor in \eqref{eq:subproblem-error-update}) and enlarges its nominal overlap by at least one stage (the $\max$ in \eqref{eq:subproblem-overlap-update}).

\emph{Step 2 (a parameter-only certificate for the accuracy test).} Since the exact Newton step satisfies $\Gamma^\tau(\Delta\bm z^\tau, \Delta\bm\lambda^\tau) + \nabla\mathcal L^\tau = \bm 0$, subtracting it from the definition of the residual gives $\bm r = \Gamma^\tau(\Delta\tilde{\bm z}^\tau - \Delta\bm z^\tau,\ \Delta\tilde{\bm\lambda}^\tau - \Delta\bm\lambda^\tau)$, and Lemma~\ref{lem:global-residual-error} bounds the concatenated error directly:
\begin{align}
\|\bm r\|  \leq& \|\Gamma^\tau\|\left[C_1\Big(\sum_{j=0}^{M-1}\rho^{\,2b_j^\tau}\Big)^{\frac{1}{2}} + C_2\Big(\sum_{j=0}^{M-1}(\epsilon_j^\tau)^2\Big)^{\frac{1}{2}}\right] \nonumber \\ 
& \times \|(\Delta \bm{z}^\tau, \Delta \bm{\lambda}^\tau)\|\,,\label{eq:lemma2-per-subproblem-residual} 
\end{align}
with $C_1, C_2, \rho$ the constants of Lemma~\ref{lem:global-residual-error}. Note that every decomposed residual obeys $\|\bm r_i\| \leq \|\bm r\|$. Since the iterate is fixed throughout the accuracy loop, the test \eqref{eq:globalResidualBound} holds as soon as
\begin{align}
&C_1\Big(\sum_{j=0}^{M-1}\rho^{\,2b_j^\tau}\Big)^{\frac{1}{2}} + C_2\Big(\sum_{j=0}^{M-1}(\epsilon_j^\tau)^2\Big)^{\frac{1}{2}} \;  \nonumber \\
&\quad \leq\; \frac{\sqrt M\,\varsigma}{\|\Gamma^\tau\|\,\|(\Delta \bm{z}^\tau, \Delta \bm{\lambda}^\tau)\|}\,, 
\label{eq:lemma2-certificate}
\end{align}
whose right-hand side is a fixed positive threshold and whose left-hand side depends only on the local parameters. (If $\|(\Delta \bm{z}^\tau, \Delta \bm{\lambda}^\tau)\| = 0$ then $\nabla\mathcal L^\tau = \bm 0$, excluded above.) In particular, since each $\ell_2$ sum is at most $\sqrt M$ times its largest term, \eqref{eq:lemma2-certificate} holds once every subproblem satisfies the symmetric per-subproblem targets
\begin{subequations}\label{eq:lemma2-per-subproblem-targets}
\begin{align}
C_1\,\rho^{\,b_j^\tau} \;\leq&\; \frac{\varsigma}{2\|\Gamma^\tau\|\,\|(\Delta \bm{z}^\tau, \Delta \bm{\lambda}^\tau)\|}, \\
C_2\,\epsilon_j^\tau \;\leq&\; \frac{\varsigma}{2\|\Gamma^\tau\|\,\|(\Delta \bm{z}^\tau, \Delta \bm{\lambda}^\tau)\|}\,. 
\end{align}
\end{subequations}
\emph{Step 3 (finite termination).} By \eqref{eq:lemma2-blocking-progress}, on every failing pass \emph{every} subproblem contracts its tolerance by at least the fixed factor $e^{-\varrho_\epsilon/\sqrt M}$ and enlarges its nominal overlap by at least one stage. Hence the tolerance target in \eqref{eq:lemma2-per-subproblem-targets} is met by \emph{every} subproblem after at most
\begin{align*}
k^{\epsilon} \coloneqq
\max_j\left\{
0,\,
\left\lceil
\frac{\sqrt M}{\varrho_\epsilon}
\ln\!\left(
\frac{2C_2\|\Gamma^\tau\|
\|(\Delta\bm z^\tau,\Delta\bm\lambda^\tau)\|
\epsilon_j^{\tau,0}}
{\varsigma}
\right)
\right\rceil
\right\} 
\end{align*}
failing passes, where $\epsilon_j^{\tau,0}$ denotes the value of $\epsilon_j^\tau$ on entry to the accuracy loop, and the overlap target is met by \emph{every} subproblem after at most
\begin{align*}
k^b \coloneqq
\max_j\left\{
0,\,
\left\lceil
\log_{1/\rho}\!\left(
\frac{2C_1\|\Gamma^\tau\|
\|(\Delta\bm z^\tau,\Delta\bm\lambda^\tau)\|}
{\varsigma}
\right)
-b_j^{\tau,0}
\right\rceil
\right\} 
\end{align*}
failing passes, where, as above, $b_j^{\tau, 0}$ denotes the value of $b_j^\tau$ on entry to the accuracy loop. Therefore, after at most $\max\{k^{\epsilon}, k^{b}\}$ failing passes every subproblem satisfies both targets in \eqref{eq:lemma2-per-subproblem-targets}, so \eqref{eq:lemma2-certificate} and hence \eqref{eq:globalResidualBound} hold and the loop terminates.
\end{proof}

\subsection{Proof of Lemma \ref{lem:wellposed-global-descent}}

\begin{proof}
For simplicity, we suppress the iteration index $\tau$ and use the shorthand notation $\nabla_{\bm{z}}\mathcal{L} = \nabla_{\bm{z}}\mathcal{L}(\bm{z}, \bm{\lambda})$,  $\nabla_{\bm{\lambda}}\mathcal{L} = \nabla_{\bm{\lambda}}\mathcal{L}(\bm{z}, \bm{\lambda})\,, $ and  $\nabla\mathcal{L} = (\nabla_{\bm{z}}\mathcal{L}; \nabla_{\bm{\lambda}}\mathcal{L})$ for both the Lagrangian $\mathcal{L}$ and augmented Lagrangian $\mathcal{L}_{\bm{\eta}}$ respectively.  By definition, we have that 
\begin{align}
\begin{pmatrix}
\nabla_{\bm{z}} \mathcal{L}_{\bm{\eta}} \\
\nabla_{\bm{\lambda}} \mathcal{L}_{\bm{\eta}}
\end{pmatrix}^T \begin{pmatrix}
\Delta \tilde{\bm{z}} \\ \Delta \tilde{\bm{\lambda}}
\end{pmatrix} =& \underbrace{\begin{pmatrix}
\nabla_{\bm{z}} \mathcal{L}_{\bm{\eta}} \\ 
\nabla_{\bm{\lambda}} \mathcal{L}_{\bm{\eta}}
\end{pmatrix}^T \begin{pmatrix}
\Delta \bm{z} \\ \Delta \bm{\lambda}
\end{pmatrix}}_{=: \mathcal{I}_1}  \nonumber \\
& \hskip-0.8cm +  \underbrace{\begin{pmatrix}
\nabla_{\bm{z}} \mathcal{L}_{\bm{\eta}} \\ 
\nabla_{\bm{\lambda}} \mathcal{L}_{\bm{\eta}}
\end{pmatrix}^T \begin{pmatrix}
\Delta \tilde{\bm{z}} -\Delta \bm{z} \\ \Delta \tilde{\bm{\lambda}} - \Delta \bm{\lambda} 
\end{pmatrix}}_{=: \mathcal{I}_2} \,. 
\label{eq:lemma3-aug-expand}
\end{align}
Notice that $\mathcal{I}_1$ is the exact Newton descent direction considered the "good" term which will be shown to be large enough to cancel out the perturbation terms $\mathcal{I}_2$. By a standard approach as in \cite[Theorem 5.1]{naFOTD}, we have that
\begin{align}
\mathcal{I}_1 :=& \begin{pmatrix}
\nabla_{\bm{z}} \mathcal{L}_{\bm{\eta}} \\ 
\nabla_{\bm{\lambda}}\mathcal{L}_{\bm{\eta}}
\end{pmatrix}^T \begin{pmatrix}
\Delta \bm{z} \\ \Delta \bm{\lambda}
\end{pmatrix}   \nonumber \\
=& \begin{pmatrix}
\Delta \bm{z} \\ \Delta \bm{\lambda}
\end{pmatrix}^T \begin{pmatrix}
I + \eta_2 H & \eta_1 G^T \\ \eta_2G & I 
\end{pmatrix} \begin{pmatrix}
\nabla_{\bm{z}} \mathcal{L} \\ \nabla_{\bm{\lambda}}\mathcal{L} 
\end{pmatrix}   \nonumber \\
=&  -\begin{pmatrix}
\Delta \bm{z} \\ \Delta \bm{\lambda}
\end{pmatrix}^T \begin{pmatrix}
I + \eta_2 H & \eta_1 G^T \\ \eta_2G & I 
\end{pmatrix} \begin{pmatrix}
\hat{H} & G^T \\ G & \bm{0}
\end{pmatrix}
\begin{pmatrix}
\Delta \bm{z} \\ \Delta \bm{\lambda}
\end{pmatrix}   \nonumber \\
=& -\frac{\eta_1}{2} \|\nabla_{\bm{\lambda}}\mathcal{L}\|^2 - \frac{\eta_2}{2} \|\nabla_{\bm{z}}\mathcal{L}\|^2  \nonumber \\
& - (\Delta \bm{z})^T \bigg[(I+\eta_2H)\hat{H} + \frac{\eta_1}{2}G^TG\bigg]\Delta \bm{z}   \nonumber \\
& -(\Delta \bm{\lambda})^T G \bigg[2I + \eta_2(\hat{H}+H)\bigg]\Delta \bm{z}  \nonumber \\
&+ \bigg( \frac{\eta_2}{2}\|\nabla_{\bm{z}}\mathcal{L}\|^2 - \eta_2 \|\hat{H}\Delta \bm{z} + \nabla_{\bm{z}}\mathcal{L}\|^2 \bigg)\,. 
\label{eq:lemma3-i1-expand}
\end{align}
Note, the last two terms, by direct substitution of $\nabla_{\bm{z}}\mathcal{L} = -\hat{H}\Delta \bm{z} - G^T \Delta \bm{\lambda}$, satisfy 
\begin{align}
\nonumber    \frac{\eta_2}{2} \|\nabla_{\bm{z}}\mathcal{L}\|^2 &- \eta_2\|\hat{H}\Delta \bm{z} + \nabla_{\bm{z}}\mathcal{L}\|^2 \\ &\leq \eta_2 \|\hat{H}\Delta \bm{z}\|^2  \nonumber
+ \eta_2 (\Delta \bm{z})^T \hat{H}G^T\Delta \bm{\lambda} \nonumber \\ & \quad - \frac{\eta_2}{2} \|G^T\Delta \bm{\lambda}\|^2\,. 
\label{eq:lemma3-last-two}
\end{align}
Choose the threshold $\varpi\geq1$. Under \eqref{eq:wellposed-global-descent-parameter-condition}, we have $\eta_2\leq1/\varpi\leq1$ and $\eta_1\geq\varpi/\eta_2\geq\varpi^2\geq1$, hence $\eta_1\geq\eta_2$. Then substituting \eqref{eq:lemma3-last-two} into \eqref{eq:lemma3-i1-expand}, using $\|\hat{H}\|, \|H\| \leq \Upsilon_{\rm Upper}$ from Assumptions \ref{assumption:uniform-boundedness} and \ref{assumption:compactness} yields
\begin{align*}
\mathcal{I}_1 \le&-\frac{\eta_2}{2} \|\nabla \mathcal{L}\|^2 \\
& - (\Delta \bm{z})^T \bigg[(I+\eta_2H)\hat{H} + \frac{\eta_1}{2}G^TG\bigg]\Delta \bm{z} \\
& -(\Delta \bm{\lambda})^T G \bigg[2I + \eta_2(\hat{H}+H)\bigg]\Delta \bm{z}  \\
&+  \eta_2 \|\hat{H}\Delta \bm{z}\|^2 + \eta_2 (\Delta \bm{z})^T \hat{H}G^T\Delta \bm{\lambda} - \frac{\eta_2}{2} \|G^T\Delta \bm{\lambda}\|^2  \\
=& -\frac{\eta_2}{2} \|\nabla \mathcal{L}\|^2 \\
& - (\Delta \bm{z})^T \bigg[(I+\eta_2(H-\hat{H}))\hat{H} + \frac{\eta_1}{2}G^TG\bigg]\Delta \bm{z} \\
& -(\Delta \bm{\lambda})^T G \bigg[2I + \eta_2H\bigg]\Delta \bm{z}  \\
&  - \frac{\eta_2}{2} \|G^T\Delta \bm{\lambda}\|^2  \\
\leq& -\frac{\eta_2}{2} \|\nabla\mathcal{L} \|^2 +\eta_2\|(H-\hat{H})\hat{H}\|\|\Delta \bm{z}\|^2 \\
& -(\Delta \bm{z})^T\big[\hat{H}+\frac{\eta_1}{2}G^TG \big]\Delta \bm{z}  \\
& +2 \|\Delta \bm{\lambda}\|\|G\Delta \bm{z}\| + \eta_2 \|H\|\|G^T\Delta \bm{\lambda}\| \|\Delta \bm{z}\| \\
& - \frac{\eta_2}{2} \|G^T\Delta \bm{\lambda}\|^2  \\
\leq& -\frac{\eta_2}{2} \|\nabla\mathcal{L} \|^2 +2\eta_2 \Upsilon_{\rm Upper}^2\|\Delta \bm{z}\|^2 \\
& -(\Delta \bm{z})^T\big[\hat{H}+\frac{\eta_1}{2}G^TG \big]\Delta \bm{z}  \\
& +2 \|\Delta \bm{\lambda}\|\|G\Delta \bm{z}\| + \eta_2 \Upsilon_{\rm Upper} \|G^T \Delta \bm{\lambda}\| \|\Delta \bm{z}\| \\
& - \frac{\eta_2}{2} \|G^T\Delta \bm{\lambda}\|^2 .
\end{align*}
Applying Young's inequality yields
\begin{align*}
\mathcal{I}_1 \leq& -\frac{\eta_2}{2} \|\nabla\mathcal{L} \|^2 +2\eta_2 \Upsilon_{\rm Upper}^2\|\Delta \bm{z}\|^2 \\
& -(\Delta \bm{z})^T\big[\hat{H}+\frac{\eta_1}{2}G^TG \big]\Delta \bm{z}  \\
&+2 \|\Delta \bm{\lambda}\|\|G\Delta \bm{z}\| + \eta_2 \Upsilon_{\rm Upper} \|G^T \Delta \bm{\lambda}\| \|\Delta \bm{z}\| \\
& - \frac{\eta_2}{2} \|G^T\Delta \bm{\lambda}\|^2  \\
\leq& -\frac{\eta_2}{2} \|\nabla\mathcal{L} \|^2 +3\eta_2 \Upsilon_{\rm Upper}^2\|\Delta \bm{z}\|^2 \\
& -(\Delta \bm{z})^T\big[\hat{H}+\frac{\eta_1}{2}G^TG \big]\Delta \bm{z}  \\
&+2 \|\Delta \bm{\lambda}\|\|G\Delta \bm{z}\|  - \frac{\eta_2}{4} \|G^T\Delta \bm{\lambda}\|^2 .
\end{align*}
By the identity blocks associated with the initial-state and dynamics constraints, $G$ has full row rank. Moreover, Assumptions \ref{assumption:uniform-controllability} and \ref{assumption:uniform-boundedness} provide a positive lower bound on $\sigma_{\min}(G^\tau)$ that is independent of $\tau$. Combining the inequality $GG^T \succeq (\sigma_{\min}^\tau)^2I$, which follows from the definition of $\sigma_{\min}^\tau$, with Young's inequality applied to $2\|\Delta \bm{\lambda}\|\|G\Delta \bm{z}\|$ yields
\begin{align}
\mathcal{I}_1 \leq&\; -\frac{\eta_2}{2} \|\nabla\mathcal{L} \|^2 +3\eta_2 \Upsilon_{\rm Upper}^2\|\Delta \bm{z}\|^2  \nonumber \\
& -(\Delta \bm{z})^T\bigg[\hat{H}+\left(\frac{\eta_1}{2}-\frac{8}{\eta_2(\sigma_{\min}^\tau)^2} \right)G^TG \bigg]\Delta \bm{z}   \nonumber \\
&   - \frac{\eta_2 (\sigma_{\min}^\tau)^2}{8} \|\Delta \bm{\lambda}\|^2 .
\label{eq:lemma3-i1-intermediate}
\end{align}
We now split $\Delta \bm{z} = \Delta \bm{u} + \Delta \bm{v}$ where $\Delta \bm{u} \in \rm{range}(G^T)$ and $\Delta \bm{v} \in \rm{ker}(G)$ to apply Assumption \ref{assumption:uniform-lower-bound-hessian} for the second and third terms of \eqref{eq:lemma3-i1-intermediate}. Then, let $\bar{\bm{u}} $ be such that $\Delta \bm{u} = G^T\bar{\bm{u}}$ which exists by the decomposition of $\Delta \bm{z}$. We then have
\begin{align*}
&3\eta_2 \Upsilon_{\rm Upper}^2\|\Delta \bm{z}\|^2 \\
&\quad -(\Delta \bm{z})^T\bigg[\hat{H}+\left(\frac{\eta_1}{2}-\frac{8}{\eta_2(\sigma_{\min}^\tau)^2} \right)G^TG \bigg]\Delta \bm{z}  \\
&=\; 3 \eta_2 \Upsilon_{\rm Upper}^2 \|\Delta \bm{z}\|^2 - (\Delta \bm{v})^T \hat{H} \Delta \bm{v} \\
&\quad - 2(\Delta \bm{v})^T \hat{H} \Delta \bm{u} - (\Delta \bm{u})^T\hat{H} \Delta \bm{u} \\
&\quad - \bigg(\frac{\eta_1}{2} - \frac{8}{\eta_2 (\sigma_{\min}^\tau)^2} \bigg) \|GG^T \bar{\bm{u}}\|^2  \\
&\leq\; (3 \eta_2 \Upsilon_{\rm Upper}^2 - \gamma_{\rm RH}) \|\Delta \bm{z}\|^2 \\
&\quad + 2 \Upsilon_{\rm Upper} \|\Delta \bm{u}\| \|\Delta \bm{v}\| + (\gamma_{\rm RH} + \Upsilon_{\rm Upper}) \|\Delta \bm{u}\|^2  \\
&\quad  - \bigg(\frac{\eta_1 (\sigma_{\min}^\tau)^2}{2}  - \frac{8}{\eta_2} \bigg)\|G^T \bar{\bm{u}}\|^2 .
\end{align*}
Applying Young's inequality again yields
\begin{align}
&3\eta_2 \Upsilon_{\rm Upper}^2\|\Delta \bm{z}\|^2  \nonumber \\
&\quad -(\Delta \bm{z})^T\bigg[\hat{H}+\left(\frac{\eta_1}{2}-\frac{8}{\eta_2(\sigma_{\min}^\tau)^2} \right)G^TG \bigg]\Delta \bm{z}   \nonumber \\
&\leq\;  (3 \eta_2 \Upsilon_{\rm Upper}^2 - \frac{\gamma_{\rm RH}}{2}) \|\Delta \bm{z}\|^2  \nonumber \\
&\quad +   \bigg(\gamma_{\rm RH} + \Upsilon_{\rm Upper} + \frac{2 \Upsilon_{\rm Upper}^2}{\gamma_{\rm RH}} - \frac{\eta_1 (\sigma_{\min}^\tau)^2}{2}  + \frac{8}{\eta_2 }\bigg)  \nonumber \\ & \qquad \times  \|G^T \bar{\bm{u}}\|^2   \nonumber \\
&=\;  (3 \eta_2 \Upsilon_{\rm Upper}^2 - \frac{\gamma_{\rm RH}}{2}) \|\Delta \bm{z}\|^2  \nonumber \\
&\quad +   \bigg(\gamma_{\rm RH} + \Upsilon_{\rm Upper} + \frac{2 \Upsilon_{\rm Upper}^2}{\gamma_{\rm RH}} - \frac{\eta_1 (\sigma_{\min}^\tau)^2}{2}  + \frac{8}{\eta_2 }\bigg) \nonumber \\ & \qquad \times  \|\Delta \bm{u}\|^2. \label{eq:lemma3-bad-term}
\end{align}
We impose the following conditions on $\eta_1$ and $\eta_2$:
\begin{subequations}\label{eq:lemma3-i1-conditions}
    \begin{align}
\eta_2&<\frac{\gamma_{\rm RH}}{6\Upsilon_{\rm Upper}^2+(\sigma_{\min}^\tau)^2/4}, \\
\eta_1\eta_2&>\frac{2}{(\sigma_{\min}^\tau)^2}\Bigg(8+\frac{\gamma_{\rm RH}^2+\gamma_{\rm RH}\Upsilon_{\rm Upper}+2\Upsilon_{\rm Upper}^2}{6\Upsilon_{\rm Upper}^2+(\sigma_{\min}^\tau)^2/4}\Bigg).  
\end{align}
\end{subequations}
These conditions make the second term in \eqref{eq:lemma3-bad-term} negative and yield the following bound.
\begin{align}
&3\eta_2 \Upsilon_{\rm Upper}^2\|\Delta \bm{z}\|^2  \nonumber \\
\nonumber  &\quad -(\Delta \bm{z})^T\bigg[\hat{H}+\left(\frac{\eta_1}{2}-\frac{8}{\eta_2(\sigma_{\min}^\tau)^2} \right)G^TG \bigg]\Delta \bm{z} \\ &\qquad \leq -\frac{\eta_2 (\sigma_{\min}^\tau)^2}{8} \|\Delta \bm{z}\|^2\,. 
\label{eq:lemma3-decay-I1}
\end{align}
From here, plugging \eqref{eq:lemma3-decay-I1} into \eqref{eq:lemma3-i1-intermediate} yields
\begin{align}
\mathcal{I}_1 \leq -\frac{\eta_2}{2} \|\nabla \mathcal{L}\|^2 - \frac{\eta_2 (\sigma_{\min}^\tau)^2}{8}\left\|\begin{pmatrix} \Delta \bm{z} \\ \Delta \bm{\lambda} \end{pmatrix} \right\|^2\,.  \label{eq:lemma3-i1-final}
\end{align}
We now turn to the second term, $\mathcal I_2$, and apply the same expansion of $\mathcal L_{\bm\eta}$.
\begin{align}
\mathcal{I}_2 =&\begin{pmatrix}
\nabla_{\bm z}
\mathcal{L}_{\bm{\eta}}\\ \nabla_{\bm \lambda}\mathcal{L}_{\bm{\eta}}    \end{pmatrix}^T  \nonumber \begin{pmatrix}
\Delta \tilde{\bm{z}} - \Delta \bm{z} \\ \Delta \tilde{\bm{\lambda}}-\Delta \bm{\lambda}
\end{pmatrix} \nonumber \\ =&  \nonumber 
-\begin{pmatrix}
\Delta \tilde{\bm{z}} - \Delta \bm{z} \\ \Delta \tilde{\bm{\lambda}}-\Delta \bm{\lambda}
\end{pmatrix}^T \begin{pmatrix}
\begin{aligned} &(I+\eta_2 H)\hat{H}\\ &+\eta_1 G^TG\end{aligned} & (I+\eta_2 H)G^T \\ G(I+\eta_2\hat{H}) & \eta_2 GG^T
\end{pmatrix} \nonumber \\ & \times \begin{pmatrix}
\Delta \bm{z} \\ \Delta \bm{\lambda}
\end{pmatrix}\,. 
\label{eq:lemma3-i2-term}
\end{align}
Furthermore, note that the condition on the residual 
\begin{align}
&\|\bm{r}(\Delta \tilde{\bm{z}}^\tau, \Delta \tilde{\bm{\lambda}}^\tau)\| \leq \frac{\epsilon^\tau \|\nabla \mathcal{L}^\tau\|}{\|\Gamma^\tau\|\Psi^\tau}  \nonumber
\end{align}
implies that
\begin{align} \Psi^\tau \left\|\Gamma^\tau \begin{pmatrix} \Delta \tilde{\bm{z}}^\tau - \Delta \bm{z}^\tau \\ \Delta \tilde{\bm{\lambda}}^\tau -\Delta \bm{\lambda}^\tau \end{pmatrix} \right\| \leq \frac{\epsilon^\tau \|\nabla \mathcal{L}^\tau\|}{\|\Gamma^\tau\|}\,. 
\label{eq:lemma3-residual-bound}
\end{align}
Now, using \cite[Lemma 5.1]{naFOTD}, and writing $h^\tau \coloneqq \|\hat H^\tau\|$ and $\sigma_{\min}^\tau := \sigma_{\min}(G^\tau)$, we have
\begin{align}
\nonumber \|\Gamma^{-1}\|\leq&\;\frac{1}{\gamma_{\rm RH}}+\frac{2}{\sigma_{\min}^\tau}+\frac{2h}{\sigma_{\min}^\tau\gamma_{\rm RH}}
+\frac{h}{(\sigma_{\min}^\tau)^2} \\ 
&+\frac{h^2}{(\sigma_{\min}^\tau)^2\gamma_{\rm RH}} \nonumber \\ \leq&\;\frac{7}{20}\Psi.
\label{eq:lemma3-kkt-inv-bound}
\end{align}
Using \eqref{eq:lemma3-kkt-inv-bound} in conjunction with \eqref{eq:lemma3-residual-bound}
\begin{align}
\left\| \begin{pmatrix} \Delta \tilde{\bm{z}}^\tau - \Delta \bm{z}^\tau \\ \Delta \tilde{\bm{\lambda}}^\tau -\Delta \bm{\lambda}^\tau \end{pmatrix} \right\| \leq&\; \frac{\epsilon^\tau \|\nabla \mathcal{L}^\tau \|}{\|\Gamma^\tau\|}  \nonumber   \\ 
\leq&\; \frac{\epsilon^\tau \|\Gamma^\tau \| \left\| \begin{pmatrix} \Delta \bm{z}^\tau \\ \Delta \bm{\lambda}^\tau \end{pmatrix} \right\|}{\|\Gamma^\tau\|}  \nonumber \\ 
\leq&\; \epsilon^\tau \left\| \begin{pmatrix} \Delta \bm{z}^\tau \\ \Delta \bm{\lambda}^\tau \end{pmatrix} \right\|\,. 
\label{eq:lemma3-residual-final}
\end{align}
Note, under Assumptions \ref{assumption:uniform-boundedness}--\ref{assumption:compactness} (see \cite[Lemma 5.1]{naFOTD}), the banded structure of $G^\tau$ gives $\|G^\tau\| \leq 1 + 2\Upsilon_{\rm Upper}$ for any $\tau$. For brevity, we henceforth let $\Upsilon_{\rm Upper}$ denote a common uniform upper bound on $\|H^\tau\|$, $\|\hat H^\tau\|$, and $\|G^\tau\|$. Applying \eqref{eq:lemma3-residual-final}, $\|H\|, \|\hat{H}\|, \|G\| \leq \Upsilon_{\rm Upper}$ to \eqref{eq:lemma3-i2-term} with \eqref{eq:lemma3-residual-final} yields
\begin{align*}
\mathcal{I}_2 \leq&\; \epsilon \left\|\begin{pmatrix}
\Delta \bm{z} \\ \Delta \bm{\lambda} 
\end{pmatrix} \right\|^2  \big(\Upsilon_{\rm Upper} + \eta_2 \Upsilon_{\rm Upper}^2 + (\eta_1+\eta_2)\Upsilon_{\rm Upper}^2\\ &\quad + 2(1 + \eta_2\Upsilon_{\rm Upper})\Upsilon_{\rm Upper}\big)  \\
\leq&\; \epsilon \left\|\begin{pmatrix}
\Delta \bm{z} \\ \Delta \bm{\lambda} 
\end{pmatrix} \right\|^2  (3 \Upsilon_{\rm Upper} + 4 \eta_2 \Upsilon_{\rm Upper}^2 + \eta_1 \Upsilon_{\rm Upper}^2)\,. 
\end{align*}
Using \eqref{eq:lemma3-i1-conditions}, we have that 
\begin{align*}
\eta_1 \geq&\;\frac{\eta_1 \eta_2}{\eta_2} \\ \geq&\; \frac{16 / (\sigma_{\min}^\tau)^2}{\gamma_{\rm RH}/ (6 \Upsilon_{\rm Upper}^2 + (\sigma_{\min}^\tau)^2/4)} \\
=&\; \frac{96 \Upsilon_{\rm Upper}^2}{(\sigma_{\min}^\tau)^2 \gamma_{\rm RH}} + \frac{4}{\gamma_{\rm RH}} \\  \geq& \; \frac{4}{\gamma_{\rm RH}}\,. 
\end{align*}
Now, noting that by construction $\gamma_{\rm RH} \leq \Upsilon_{\rm Upper}$ since
\begin{align*}
\gamma_{\rm RH} \leq \lambda_{\rm min}(Z^T\hat{H}Z) \leq \|Z^T\hat{H} Z\| \leq \|\hat{H}\| \leq \Upsilon_{\rm Upper}\,.
\end{align*}
Hence, with $\eta_1 \geq \eta_2$ and noting $5 \eta_1 \Upsilon_{\rm Upper} > 3$, we obtain a bound on $\mathcal{I}_2$ as
\begin{align}
\mathcal{I}_2 \leq&\; \epsilon \left\|\begin{pmatrix}
\Delta \bm{z} \\ \Delta \bm{\lambda} 
\end{pmatrix} \right\|^2  (3 \Upsilon_{\rm Upper} + 4 \eta_2 \Upsilon_{\rm Upper}^2 + \eta_1 \Upsilon_{\rm Upper}^2)  \nonumber \\ 
\leq&\; \epsilon 10 \eta_1 \Upsilon_{\rm Upper}^2 \left\|\begin{pmatrix}
\Delta \bm{z} \\ \Delta \bm{\lambda} 
\end{pmatrix} \right\|^2  \,. 
\label{eq:lemma3-i2-final}
\end{align}
Combining \eqref{eq:lemma3-aug-expand}, \eqref{eq:lemma3-i1-final}, and \eqref{eq:lemma3-i2-final}, we obtain
\begin{align*}&
\begin{pmatrix}
\nabla_{\bm{z}} \mathcal{L}_{\bm{\eta}} \\
\nabla_{\bm{\lambda}} \mathcal{L}_{\bm{\eta}}
\end{pmatrix}^T \begin{pmatrix}
\Delta \tilde{\bm{z}} \\ \Delta \tilde{\bm{\lambda}}
\end{pmatrix} \nonumber \\ 
&\qquad \leq -\frac{\eta_2}{2} \|\nabla\mathcal{L}\|^2  \\ &\qquad \quad  -\left(\frac{\eta_2(\sigma_{\min}^\tau)^2}{8} - 10\eta_1\epsilon \Upsilon^2_{\rm Upper} \right) \left\|\begin{pmatrix} \Delta \bm{z} \\ \Delta \bm{\lambda} \end{pmatrix} \right\|^2\,, 
\end{align*}
which satisfies the descent condition when 
\begin{align}
\frac{\epsilon \eta_1}{\eta_2} \leq \frac{(\sigma_{\min}^\tau)^2 }{80 \Upsilon_{\rm Upper}^2}\,. \label{eq:lemma3-final-cond}
\end{align}
Since $\sigma_{\min}^\tau$ is upper and lower bounded by the assumptions, there exists a sufficiently large $\varpi\geq1$ such that \eqref{eq:wellposed-global-descent-parameter-condition} implies \eqref{eq:lemma3-i1-conditions} and \eqref{eq:lemma3-final-cond} completing the result. 
\end{proof}

\subsection{Proof of Corollary \ref{corr:achievability}}

\begin{proof}
Fix an outer iteration $\tau$ at which the stopping test does not hold. Then $\|\nabla\mathcal L^\tau\|>\mathrm{tol}_{\rm KKT}\geq0$. We show that the inner loop of Algorithm~\ref{alg:aotd-compact} terminates after finitely many sub-iterations.

Every update of \eqref{eq:adaptive-parameter-update}--\eqref{eq:adaptive-epsilon-update} satisfies
\begin{equation*}
\begin{aligned}
&\eta_1^\tau \eta_2^\tau \;\mapsto\; \nu \, \eta_1^\tau \eta_2^\tau,
\qquad\qquad
\eta_2^\tau \;\mapsto\; \eta_2^\tau / \nu, \\
&\frac{\epsilon^\tau\eta_1^\tau }{\eta_2^\tau}  \mapsto\; 
\min\left\{\epsilon^\tau_{\rm max},\frac{\epsilon^\tau}{\nu^4}\right\}
\frac{\nu^3\eta_1^\tau}{\eta_2^\tau}
\;\leq\;
\frac{1}{\nu}\frac{\epsilon^\tau\eta_1^\tau}{\eta_2^\tau}\,. 
\end{aligned}
\end{equation*}
Since $\varpi$ in Lemma~\ref{lem:wellposed-global-descent} is independent of $\tau$, the descent direction condition \eqref{eq:descent-direction-condition} is therefore reached after finitely many activations of \eqref{eq:adaptive-parameter-update}--\eqref{eq:adaptive-epsilon-update}.

Between consecutive updates the global parameters $(\eta_1^\tau, \eta_2^\tau, \epsilon^\tau)$ are frozen. By Lemmas~\ref{lem:global-residual-error}--\ref{lem:global-adaptive-well-posed}, there exist $\{\epsilon_i^\tau, b_i^\tau\}_{i \in [M-1]}$ satisfying \eqref{eq:globalResidualBound}, and since the updates \eqref{eq:subproblem-error-update}--\eqref{eq:subproblem-overlap-update} drive $\epsilon_i^\tau$ monotonically down and $b_i^\tau$ monotonically up, condition \eqref{eq:globalResidualBound} is reached in finitely many local updates within each segment of frozen global parameters. Combining yields finitely many sub-iterations overall, at the end of which both \eqref{eq:globalResidualBound} and \eqref{eq:descent-direction-condition} hold simultaneously.
\end{proof}

\subsection{Proof of Lemma \ref{lem:stability}}

\begin{proof}
The proof is broken into two parts where we first show that the outer while loop is guaranteed to stabilize and once stabilized, then the inner loop is guaranteed to stabilize.

\noindent (a) Outer loop stabilization

The parameters $\eta_1^\tau, \eta_2^\tau, \epsilon^\tau$ are updated through \eqref{eq:adaptive-parameter-update}--\eqref{eq:adaptive-epsilon-update} only when the descent condition \eqref{eq:descent-direction-condition} fails, and by Lemma \ref{lem:wellposed-global-descent} there is a constant $\varpi$, independent of $\tau$, such that \eqref{eq:descent-direction-condition} holds whenever \eqref{eq:wellposed-global-descent-parameter-condition} is met, i.e.\ $\eta_1^\tau\eta_2^\tau \geq \varpi$, $\eta_2^\tau \leq 1/\varpi$, and $\epsilon^\tau\eta_1^\tau/\eta_2^\tau \leq 1/\varpi$. Each descent-failure update sends $\eta_1 \mapsto \nu^2\eta_1$, $\eta_2 \mapsto \eta_2/\nu$, and $\epsilon \mapsto \min\{\epsilon^\tau_{\rm max}, \epsilon/\nu^4\}$, so that
\begin{align*}
\eta_1^\tau\eta_2^\tau \mapsto& \nu\,\eta_1^\tau\eta_2^\tau\,,  \\ \qquad
\eta_2^\tau \mapsto& \eta_2^\tau/\nu\,,  \qquad
\\    \frac{\epsilon^\tau\eta_1^\tau}{\eta_2^\tau} \mapsto& \min\Big\{\epsilon^\tau_{\rm max}, \tfrac{\epsilon^\tau}{\nu^4}\Big\}\frac{\nu^3\eta_1^\tau}{\eta_2^\tau} \leq \frac{1}{\nu}\cdot\frac{\epsilon^\tau\eta_1^\tau}{\eta_2^\tau}\,. 
\end{align*}
Hence each update multiplies $\eta_1^\tau\eta_2^\tau$ by exactly $\nu$ and divides both $\eta_2^\tau$ and the ratio $\epsilon^\tau\eta_1^\tau/\eta_2^\tau$ by at least $\nu$. Since the thresholds $\varpi, 1/\varpi$ in \eqref{eq:wellposed-global-descent-parameter-condition} are independent of $\tau$, all three conditions hold after at most
\begin{align*}
\max\Big\{\big\lceil\log_\nu\tfrac{\varpi}{\eta_1^0\eta_2^0}\big\rceil,\ \big\lceil\log_\nu(\varpi\eta_2^0)\big\rceil,\ \big\lceil\log_\nu\tfrac{\varpi\epsilon^0\eta_1^0}{\eta_2^0}\big\rceil,\ 0\Big\}
\end{align*}
descent-failure updates, each occurring at a finite outer iteration since the inner accuracy loop terminates finitely (Lemma \ref{lem:global-adaptive-well-posed}). Let $\bar\tau$ be the last such iteration. Then for all $\tau \geq \bar\tau$ the descent condition holds and the penalty parameters are frozen, $(\eta_1^\tau,\eta_2^\tau) = (\eta_1^{\bar\tau},\eta_2^{\bar\tau}) =: (\eta_1^\star,\eta_2^\star)$.

It remains to control $\epsilon^\tau$, which, after the descent condition can no longer fail, can only be modified through the enforcement of the cap $\epsilon^\tau_{\rm max}$. By Assumptions~\ref{assumption:uniform-controllability}--\ref{assumption:uniform-boundedness}, there exist constants $\bar{\Upsilon},\bar{\Psi}>0$ such that $\Upsilon^\tau\leq\bar{\Upsilon}$ and $\Psi^\tau\leq\bar{\Psi}$ for all $\tau$. With the penalty parameters frozen, the accuracy cap in \eqref{eq:globalEpsilonMin} obeys the iterate-independent lower bound
\begin{align}
\epsilon^\tau_{\rm max}
=&\;
\frac{(0.5-\beta)\eta_2^\star}
{(1+\eta_1^\star+\eta_2^\star)(\Psi^\tau\Upsilon^\tau)^2}
\;  \nonumber \\\geq&\; 
\frac{(0.5-\beta)\eta_2^\star}
{(1+\eta_1^\star+\eta_2^\star)(\bar{\Psi}\bar{\Upsilon})^2}
\eqqcolon
\epsilon_{\rm cap}
>0,
\quad \forall\,\tau\geq\bar\tau . 
\label{eq:epsilon-cap-floor}
\end{align}
For $\tau\geq\bar\tau$, the descent condition never fails again, so the only remaining change to $\epsilon^\tau$ is the reduction in line~8 of Algorithm~\ref{alg:aotd-compact}: if $\epsilon^\tau>\epsilon^\tau_{\rm max}$, then $\epsilon^\tau\gets\epsilon^\tau_{\rm max}/\nu$. Each such reduction places $\epsilon^\tau$ a factor $\nu$ strictly below the current cap. Suppose reductions occur at outer iterations $\tau_1<\tau_2<\cdots$, all satisfying $\tau_j\geq\bar\tau$. Between two consecutive reductions, $\epsilon^\tau$ remains unchanged, so a reduction at $\tau_{j+1}$ requires
\begin{align*}
\epsilon_{\rm max}^{\tau_j}/\nu
=
\epsilon^{\tau_{j+1}-1}
>
\epsilon_{\rm max}^{\tau_{j+1}}
\qquad\Longrightarrow\qquad
\epsilon_{\rm max}^{\tau_{j+1}}
<
\epsilon_{\rm max}^{\tau_j}/\nu .
\end{align*}
Iterating gives $\epsilon_{\rm max}^{\tau_j}<\epsilon_{\rm max}^{\tau_1}/\nu^{j-1}$. Combining this inequality with the uniform floor $\epsilon_{\rm max}^{\tau_j}\geq\epsilon_{\rm cap}$ from \eqref{eq:epsilon-cap-floor} yields $\nu^{j-1}<\epsilon_{\rm max}^{\tau_1}/\epsilon_{\rm cap}$. Hence, only finitely many cap reductions can occur. After the last reduction, $\epsilon^\tau$ remains constant. Hence, by enlarging $\bar\tau$ to include these finitely many reductions gives $\epsilon^\tau=\epsilon^{\bar\tau}>0$ for all $\tau\geq\bar\tau$, where positivity follows because $\epsilon^0>0$ and every update takes the minimum of two strictly positive quantities. Finally, $\Psi^\tau\geq20$ and $\Upsilon^\tau\geq1$ imply $(\Psi^\tau\Upsilon^\tau)^2\geq400$, so $0<\epsilon^{\bar\tau}\leq\epsilon_{\rm max}^{\bar\tau}<0.5/400<1$. Thus, $\epsilon^{\bar\tau}\in(0,1)$, establishing the first assertion of \eqref{eq:stability-local-bounds} and providing the fixed positive value used in part (b).

\noindent (b) Inner loop stabilization

Throughout this part, a \emph{post-solve accuracy failure} means a failure of \eqref{eq:globalResidualBound} after the local systems have been solved to satisfy \eqref{eq:local-residual-cond} and their directions have been composed. The evaluation at the zero direction at the beginning of each outer iteration merely forces at least one local solve and is not counted as a post-solve accuracy failure. We suppress the within-iteration local-solve pass index on $b_j^\tau$, $\epsilon_j^\tau$, and $\mathcal E^\tau$ below to avoid cumbersome notation.

Define the global residual term
\begin{align*}
\mathcal{E}^\tau \;\coloneq\; C_1\Big(\sum_{j=0}^{M-1}\rho^{\,2b_j^\tau}\Big)^{\frac{1}{2}} + C_2\Big(\sum_{j=0}^{M-1}(\epsilon_j^\tau)^2\Big)^{\frac{1}{2}}\,,
\end{align*}
the bracket of \eqref{eq:lemma2-per-subproblem-residual}, with $C_1, C_2 > 0$ and $\rho \in (0,1)$ the constants of Lemma~\ref{lem:global-residual-error}, where, per Lemma~\ref{lem:global-residual-error}, the $j$th boundary term is omitted when subproblem $j$'s window covers the full horizon. Since \eqref{eq:subproblem-overlap-update} never decreases any $b_j^\tau$ and \eqref{eq:subproblem-error-update} never increases any $\epsilon_j^\tau$, $\mathcal{E}^\tau$ is non-increasing across post-solve failures that trigger these updates.

\emph{Step 1 (sufficient local thresholds).} Suppose a post-solve accuracy check fails at $\tau \geq \bar\tau$. If $\nabla\mathcal L^\tau = \bm 0$, the current iterate is already a KKT point, so consider $\nabla\mathcal L^\tau \neq \bm 0$. Failure of the test means
\begin{equation}
\|\bm r\| \;>\; \frac{\epsilon^\tau \|\nabla\mathcal L^\tau\|}{\|\Gamma^\tau\|\,\Psi^\tau}\,. \label{eq:lemma4-failure-lower}
\end{equation}
On the other hand, the exact Newton step satisfies $(\Delta\bm z^\tau, \Delta\bm\lambda^\tau) = -(\Gamma^\tau)^{-1}\nabla\mathcal L^\tau$, so the uniform bound on the KKT inverse gives $\|(\Delta \bm z^\tau, \Delta \bm\lambda^\tau)\| \leq \|(\Gamma^\tau)^{-1}\|\,\|\nabla\mathcal L^\tau\| \leq C_{\Gamma^{-1}}\|\nabla\mathcal L^\tau\|$, and combining this with \eqref{eq:lemma2-per-subproblem-residual} yields the opposing upper bound
\begin{align}
\|\bm r\| \;\leq&\; \|\Gamma^\tau\|\, \mathcal{E}^\tau \|(\Delta \bm z^\tau, \Delta \bm\lambda^\tau)\| \;  \nonumber \\
\leq&\; \|\Gamma^\tau\|\, C_{\Gamma^{-1}}\, \|\nabla\mathcal L^\tau\|\; \mathcal{E}^\tau\,. 
\label{eq:lemma4-failure-upper}
\end{align}
Chaining \eqref{eq:lemma4-failure-lower} with \eqref{eq:lemma4-failure-upper} and dividing through by $\|\nabla\mathcal L^\tau\| > 0$ isolates the potential: 
\begin{equation*}
\mathcal{E}^\tau\allowbreak  > \epsilon^\tau/(\|\Gamma^\tau\|^2 C_{\Gamma^{-1}} \Psi^\tau).
\end{equation*}
Finally, part (a) supplies $\epsilon^\tau=\epsilon^{\bar\tau}>0$ for all $\tau\geq\bar\tau$, and Assumptions~\ref{assumption:uniform-boundedness}--\ref{assumption:compactness} supply the uniform bounds $\|\Gamma^\tau\| \leq C_\Gamma$ and $\Psi^\tau \leq \bar\Psi$, so every post-solve accuracy failure forces
\begin{equation}
\mathcal{E}^\tau  > \frac{\epsilon^\tau}{\|\Gamma^\tau\|^{2}\, C_{\Gamma^{-1}}\, \Psi^\tau} \geq \frac{\epsilon^{\bar\tau}}{C_\Gamma^2 C_{\Gamma^{-1}} \bar\Psi} =: \delta_1 >  0, \label{eq:lemma4-blocking-floor}
\end{equation}
a positive threshold independent of $\tau$. Consequently, there exist \emph{iterate-independent} local thresholds
\begin{align}
b^\star \;\coloneq\;& \max\left\{1,\left\lceil \log_{1/\rho}\frac{2\sqrt M\,C_1}{\delta_1}\right\rceil\right\}\,, \notag
\\
\epsilon^\star  \coloneq\;& \frac{\delta_1}{2\sqrt M\, C_2}\,, 
\label{eq:lemma4-targets}
\end{align}
such that if at some $\tau \geq \bar\tau$ \emph{every} subproblem satisfies $b_j^\tau \geq b^\star$ and $\epsilon_j^\tau \leq \epsilon^\star$, then $\mathcal{E}^\tau \leq \delta_1$. Indeed, each $\ell_2$ sum in $\mathcal{E}^\tau$ is at most $\sqrt M$ times its largest term, and every subproblem satisfies $C_1\rho^{\,b_j^\tau} \leq C_1 \rho^{\,b^\star} \leq \delta_1/(2\sqrt M)$ (the boundary term of any subproblem whose window covers the full horizon being omitted outright), so $C_1\big(\sum_j \rho^{\,2b_j^\tau}\big)^{1/2} \leq \delta_1/2$ and
\begin{align*}
\mathcal{E}^\tau \;\leq\; \frac{\delta_1}{2} + \sqrt M\, C_2\,\epsilon^\star \;\leq\; \frac{\delta_1}{2} + \frac{\delta_1}{2} \;=\; \delta_1\,,
\end{align*}
and hence, since $\mathcal{E}^\tau$ is non-increasing, by \eqref{eq:lemma4-blocking-floor} every subsequent post-solve accuracy check passes.

\emph{Step 2 (monotone progress reaches the thresholds in finitely many post-solve failures).} Exactly as in the proof of Lemma~\ref{lem:global-adaptive-well-posed}, on every post-solve failing pass \emph{every} subproblem contracts its tolerance by at least the fixed factor $e^{-\varrho_\epsilon/\sqrt M}$ (the floor in \eqref{eq:subproblem-error-update}) and enlarges its nominal overlap by at least one stage (the $\max$ in \eqref{eq:subproblem-overlap-update}). Hence the tolerance threshold in \eqref{eq:lemma4-targets} is met by \emph{every} subproblem after at most
\begin{equation*}
k^{\epsilon} \;\coloneq\; \max\left\{0,\; \max_i \left\lceil \frac{\sqrt M}{\varrho_\epsilon}\,\ln\frac{\epsilon_i^{\bar\tau}}{\epsilon^\star}\right\rceil\right\}
\end{equation*}
post-solve failing passes, and the overlap threshold is met by \emph{every} subproblem after at most $k^{b} \coloneq \max\{0,\; b^\star - \min_j b_j^{\bar\tau}\} \leq b^\star$ post-solve failing passes, where the outer $\max\{0, \cdot\}$ covers the case in which a threshold is already met on entry. Both budgets are independent of $\tau$ because $\delta_1$ is. Therefore, the post-solve accuracy check can fail and trigger \eqref{eq:subproblem-error-update}--\eqref{eq:subproblem-overlap-update} on at most $\max\{k^{\epsilon},\, k^{b}\} < \infty$ passes over all $\tau \geq \bar\tau$. After the last such failure, the first post-solve accuracy check at every subsequent outer iteration passes, so the local parameters are never updated again. Enlarging $\bar\tau$ to absorb these finitely many post-solve failures gives $(\epsilon_i^\tau, b_i^\tau) = (\epsilon_i^{\bar\tau}, b_i^{\bar\tau})$ for all $i$ and $\tau \geq \bar\tau$.

Finally, each local tolerance starts positive and undergoes only finitely many multiplicative updates by positive factors; hence $\epsilon_i^{\bar\tau}>0$, completing \eqref{eq:stability-local-bounds}. Although the nominal overlaps may grow before freezing at the finite values $b_i^{\bar\tau}$, the effective overlaps are trivially bounded by the horizon ($b_i^{L,\tau} \leq n_i$ and $b_i^{R,\tau} \leq N - n_{i+1}$ by the clipping in \eqref{eq:effective-overlaps}), so every window is capped at $[0, N]$.
\end{proof}

\subsection{Proof of Theorem \ref{thm:global}}

\begin{proof}
We will use the shorthand notation $\mathcal{L}_{\bm{\eta}^\tau}^{\tau+1}:= \mathcal{L}_{\bm{\eta}^\tau}(\bm{z}^\tau + \alpha^\tau \Delta \tilde{\bm{z}}^\tau, \bm{\lambda}^\tau + \alpha^\tau \Delta \tilde{\bm{\lambda}}^\tau)$ and likewise $\mathcal{L}_{\bm{\eta}^\tau}^\tau = \mathcal{L}_{\bm{\eta}^\tau}(\bm{z}^\tau, \bm{\lambda}^\tau)$. For the fixed-horizon problem, Assumption \ref{assumption:compactness} implies that there exists a finite constant $\Upsilon_{\bm{\eta}} > 0$ such that $\sup_{\mathcal{Z} \times \Lambda} \|\nabla^2 \mathcal{L}_{\bm{\eta}}(\bm{z}, \bm{\lambda})\| \leq \Upsilon_{\bm{\eta}}$. Applying Taylor's theorem to the updates yields
\begin{align*}
&\mathcal{L}_{\bm{\eta}^\tau}^{\tau + 1} \leq\; \mathcal{L}_{\bm{\eta}^\tau}^\tau + \alpha^\tau \begin{pmatrix}
\nabla_{\bm z}\mathcal{L}_{\bm{\eta}^\tau}^\tau \\ \nabla_{\bm \lambda} \mathcal{L}_{\bm{\eta}^\tau}^\tau 
\end{pmatrix}^T \begin{pmatrix}
\Delta \tilde{\bm{z}} ^\tau\\ \Delta \tilde{\bm{\lambda}}^\tau
\end{pmatrix} \\
&\quad + \frac{(\alpha^\tau)^2 \Upsilon_{\bm{\eta}}}{2} \left\|\begin{pmatrix}
\Delta \tilde{\bm{z}}^\tau \\\Delta \tilde{\bm{\lambda}}^\tau
\end{pmatrix} \right\|^2 \,. 
\end{align*}
Using the \eqref{eq:lemma3-residual-final}, we have that 
\begin{equation}
\hskip-0.2cm\left\|\begin{pmatrix}
\Delta \tilde{\bm{z}} \\\Delta \tilde{\bm{\lambda}}
\end{pmatrix} \right\| \leq (\epsilon^\tau + 1) \left\|\begin{pmatrix}
\Delta \bm{z}^\tau \\ \Delta \bm{\lambda}^\tau
\end{pmatrix}\right\| \leq 2\left\|\begin{pmatrix}
\Delta \bm{z}^\tau \\ \Delta \bm{\lambda}^\tau
\end{pmatrix}\right\| \,. \label{eq:perturbed-bound}
\end{equation}
Hence, from here, we have 
\begin{align*}
\mathcal{L}_{\bm{\eta}^\tau}^{\tau + 1} \leq&\; \mathcal{L}_{\bm{\eta}^\tau}^\tau + \alpha^\tau \begin{pmatrix}
\nabla_{\bm z}\mathcal{L}_{\bm{\eta}^\tau}^\tau \\ \nabla_{\bm \lambda} \mathcal{L}_{\bm{\eta}^\tau}^\tau
\end{pmatrix}^T \begin{pmatrix}
\Delta \tilde{\bm{z}}^\tau \\ \Delta \tilde{\bm{\lambda}}^\tau
\end{pmatrix} \\
& + 2(\alpha^\tau)^2 \Upsilon_{\bm{\eta}} \left\|\begin{pmatrix}
\Delta\bm{z}^\tau \\\Delta \bm{\lambda}^\tau
\end{pmatrix} \right\|^2  \\
\leq&\; \mathcal{L}_{\bm{\eta}^\tau}^\tau + \alpha^\tau \begin{pmatrix}
\nabla_{\bm z}\mathcal{L}_{\bm{\eta}^\tau}^\tau \\ \nabla_{\bm \lambda} \mathcal{L}_{\bm{\eta}^\tau}^\tau 
\end{pmatrix}^T \begin{pmatrix}
\Delta \tilde{\bm{z}}^\tau \\ \Delta \tilde{\bm{\lambda}}^\tau
\end{pmatrix} \\
& + 2(\alpha^\tau)^2 \Upsilon_{\bm{\eta}} C_{\Gamma^{-1}}^2 \left\|\nabla \mathcal{L}^\tau\right\|^2\,. 
\end{align*}
Applying the descent direction condition \eqref{eq:descent-direction-condition} yields
\begin{align*}
\mathcal{L}_{\bm{\eta}^\tau}^{\tau + 1} \leq&\; \mathcal{L}_{\bm{\eta}^\tau}^\tau + \alpha^\tau \begin{pmatrix}
\nabla_{\bm z}\mathcal{L}_{\bm{\eta}^\tau}^\tau \\ \nabla_{\bm \lambda} \mathcal{L}_{\bm{\eta}^\tau}^\tau 
\end{pmatrix}^T \begin{pmatrix}
\Delta \tilde{\bm{z}}^\tau \\ \Delta \tilde{\bm{\lambda}}^\tau
\end{pmatrix} \\
& - \frac{4(\alpha^\tau)^2 \Upsilon_{\bm{\eta}} C_{\Gamma^{-1}}^2 }{\eta_2^\tau} \begin{pmatrix}
\nabla_{\bm z}\mathcal{L}_{\bm{\eta}^\tau}^\tau \\ \nabla_{\bm \lambda} \mathcal{L}_{\bm{\eta}^\tau}^\tau 
\end{pmatrix}^T \begin{pmatrix}
\Delta \tilde{\bm{z}} \\ \Delta \tilde{\bm{\lambda}}
\end{pmatrix}  \\
\leq&\; \mathcal{L}_{\bm{\eta}^\tau}^\tau + \alpha^\tau \begin{pmatrix}
\nabla_{\bm z}\mathcal{L}_{\bm{\eta}^\tau}^\tau \\ \nabla_{\bm \lambda} \mathcal{L}_{\bm{\eta}^\tau}^\tau 
\end{pmatrix}^T \begin{pmatrix}
\Delta \tilde{\bm{z}}^\tau \\ \Delta \tilde{\bm{\lambda}}^\tau
\end{pmatrix} \\
&- \frac{4(\alpha^\tau)^2 \Upsilon_{\bm{\eta}} C_{\Gamma^{-1}}^2 }{\eta_2^{\bar{\tau}}} \begin{pmatrix}
\nabla_{\bm z}\mathcal{L}_{\bm{\eta}^\tau}^\tau \\ \nabla_{\bm \lambda} \mathcal{L}_{\bm{\eta}^\tau}^\tau 
\end{pmatrix}^T \begin{pmatrix}
\Delta \tilde{\bm{z}} \\ \Delta \tilde{\bm{\lambda}}
\end{pmatrix}  \\
\leq&\; \mathcal{L}_{\bm{\eta}^\tau}^\tau  + \alpha^\tau\left(1- \frac{4\alpha^\tau \Upsilon_{\bm{\eta}} C_{\Gamma^{-1}}^2}{\eta_2^{\bar{\tau}}}\right) \nonumber \\ & \times \begin{pmatrix}
\nabla_{\bm z}\mathcal{L}_{\bm{\eta}^\tau}^\tau \\ \nabla_{\bm \lambda} \mathcal{L}_{\bm{\eta}^\tau}^\tau 
\end{pmatrix}^T \begin{pmatrix}
\Delta \tilde{\bm{z}}^\tau \\ \Delta \tilde{\bm{\lambda}}^\tau
\end{pmatrix}\,. 
\end{align*}
Choosing $\alpha^\tau$ such that
\begin{align*}
1-\frac{4 \Upsilon_{\bm{\eta}} C_{\Gamma^{-1}}^2}{\eta_2^{\bar{\tau}}}\alpha^\tau \geq \beta \Leftrightarrow \alpha^\tau \leq \frac{(1-\beta)\eta_2^{\bar{\tau}}}{4\Upsilon_{\bm{\eta}} C_{\Gamma^{-1}}^2}\,,  
\end{align*}
ensures the line search condition \eqref{eq:line-search-cond} is satisfied. \newpage 
Since backtracking starts at $1$ and contracts the trial step by $\delta$, it accepts a step satisfying $\alpha^\tau \geq \alpha_{\rm min}$ for every $\tau$, where
\begin{equation*}
\alpha_{\rm min} \;\coloneq\; \delta\min\left\{1,\frac{(1-\beta)\eta_2^{\bar{\tau}}}{4\Upsilon_{\bm{\eta}} C_{\Gamma^{-1}}^2}\right\} > 0\,.
\end{equation*}
This proves the first statement of the theorem.

Using the Armijo condition \eqref{eq:line-search-cond} and the descent condition \eqref{eq:descent-direction-condition}, we have that
\begin{align*}
\mathcal{L}_{\bm{\eta}^\tau}^{\tau + 1} - \mathcal{L}_{\bm{\eta}^\tau}^\tau \leq&\; \alpha_{\rm min} \beta \begin{pmatrix}
\nabla_{\bm z}\mathcal{L}_{\bm{\eta}^\tau}^\tau \\ \nabla_{\bm \lambda} \mathcal{L}_{\bm{\eta}^\tau}^\tau 
\end{pmatrix}^T \begin{pmatrix}
\Delta \tilde{\bm{z}}^\tau \\ \Delta \tilde{\bm{\lambda}}^\tau
\end{pmatrix} \\
\leq & 
-\alpha_{\rm min}\beta \eta_2^{\bar{\tau}} \|\nabla \mathcal{L}^\tau\|^2 / 2\,. 
\end{align*}
Finally, summing over $\tau \geq \bar{\tau}$, we have that 
\begin{align*}
\sum_{\tau = \bar{\tau}}^\infty \|\nabla \mathcal{L}^\tau\|^2 \leq \frac{2}{\eta_2^{\bar{\tau}} \alpha_{\rm min}\beta} \left(\mathcal{L}_{\bm{\eta}^{\bar{\tau}}}^{\bar{\tau}} - \min_{\mathcal Z \times \Lambda }\mathcal{L}_{\bm{\eta}^{\bar{\tau}}}(\bm{z},\bm{\lambda})\right) < \infty \,.
\end{align*}
This implies that $\|\nabla \mathcal{L}^\tau\| \to 0$ as $\tau \to \infty$. 
\end{proof}

\setcounter{equation}{0}
\renewcommand{\theequation}{B.\arabic{equation}}

\section{Proofs in Section \ref{sec:local-convergence}}\label{appendix:proofs2}

\subsection{Proof of Lemma \ref{lem:unit-step-size}}

\begin{proof}
As in the proof of Theorem \ref{thm:global}, we will use the shorthand notation $\mathcal{L}_{\bm{\eta}^\tau}^{\tau+1}:= \mathcal{L}_{\bm{\eta}^\tau}(\bm{z}^\tau + \alpha^\tau \Delta \tilde{\bm{z}}^\tau, \bm{\lambda}^\tau + \alpha^\tau \Delta \tilde{\bm{\lambda}}^\tau)$ and likewise $\mathcal{L}_{\bm{\eta}^\tau}^\tau = \mathcal{L}_{\bm{\eta}^\tau}(\bm{z}^\tau, \bm{\lambda}^\tau)$. Define
\begin{align*}
&\bm s^\tau \coloneqq
\begin{pmatrix}\Delta\bm z^\tau\\ \Delta\bm\lambda^\tau\end{pmatrix}, 
\tilde{\bm s}^\tau \coloneqq
\begin{pmatrix}\Delta\tilde{\bm z}^\tau\\ \Delta\tilde{\bm\lambda}^\tau\end{pmatrix}, 
\bm e^\tau \coloneqq \tilde{\bm s}^\tau-\bm s^\tau,  \\
&\bm g_{\bm\eta}^\tau \coloneqq
\begin{pmatrix}
\nabla_{\bm z}\mathcal L_{\bm\eta^\tau}^\tau\\
\nabla_{\bm\lambda}\mathcal L_{\bm\eta^\tau}^\tau
\end{pmatrix}, 
P^\tau \coloneqq
\begin{pmatrix}
I+\eta_2^\tau H^\tau & \eta_1^\tau(G^\tau)^T\\
\eta_2^\tau G^\tau & I
\end{pmatrix}. 
\end{align*}
We need to show that, for large enough $\tau$
\begin{align*}
\mathcal{L}_{\bm{\eta}^\tau}^{\tau+1}
\leq \mathcal{L}_{\bm{\eta}^\tau}^\tau
+ \beta (\bm g_{\bm\eta}^\tau)^T\tilde{\bm s}^\tau.
\end{align*}
Using the thrice differentiability in Assumption \ref{assumption:compactness}, we have that $\nabla^2\mathcal{L}_{\bm{\eta}}$ is continuous and hence 
\begin{equation}
\mathcal{L}_{\bm{\eta}^\tau}^{\tau + 1}
\leq \mathcal{L}_{\bm{\eta}^\tau}^\tau
+(\bm g_{\bm\eta}^\tau)^T\tilde{\bm s}^\tau
+\frac{1}{2}(\tilde{\bm s}^\tau)^T
\nabla^2\mathcal{L}_{\bm{\eta}^\tau}^\tau\tilde{\bm s}^\tau
+o(\|\tilde{\bm s}^\tau\|^2). \label{eq:lemma5-taylor-1}
\end{equation}
Now, establishing notation, for any function $a(x): \mathbb{R}^n \to \mathbb{R}^{m_1 \times m_2}$ and any vector $b \in \mathbb{R}^{m_1}$, let the inner product be given by 
\begin{equation*}
\langle \nabla a(x), b \rangle\allowbreak  := \nabla^T(a^T(x)b)\allowbreak  = \sum_{j=1}^{m_1}b_j \nabla^Ta_j(x) \allowbreak \in \mathbb{R}^{m_2 \times n}
\end{equation*}
for $a^T(x) = (a_1(x), \cdots, a_{m_1}(x))$. Then, by direct calculation, we have
\begin{align*}
\nabla^2_{\bm{z}}\mathcal{L}_{\bm{\eta}^\tau} & =\; H^\tau + \eta_1^\tau (G^\tau)^TG^\tau + \eta_2^\tau (H^\tau)^2 \\
&\qquad + \eta_1^\tau \langle \nabla_{\bm{z}}G^\tau, f^\tau \rangle + \eta_2^\tau \langle \nabla_{\bm{z}}H^\tau, \nabla_{\bm z}\mathcal{L}^\tau\rangle \,,  \\
\nabla_{\bm{z\lambda}}\mathcal{L}_{\bm{\eta}^\tau} & =\; (G^\tau)^T + \eta_2^\tau ( H^\tau(G^\tau)^T + \langle \nabla_{\bm{\lambda}}H^\tau, \nabla_{\bm{z}}\mathcal{L}^\tau \rangle ) \,,  \\
\nabla_{\bm{\lambda}}^2 \mathcal{L}_{\bm{\eta}^\tau} & =\; \eta_2^\tau G^\tau (G^\tau)^T\,. 
\end{align*}
Hence, define
\begin{equation*}
\mathcal{H}^\tau := \begin{pmatrix}
H^\tau + \eta_1^\tau (G^\tau)^TG^\tau + \eta_2^\tau (H^\tau)^2 & (I+\eta_2^\tau H^\tau)(G^\tau)^T \\ G^\tau (I+\eta_2^\tau H^\tau) & \eta_2^\tau G^\tau (G^\tau)^T
\end{pmatrix}. 
\end{equation*}
Using the fact $\|\nabla \mathcal{L}^\tau\| = \|(\nabla_{\bm{z}}\mathcal{L}^\tau, f^\tau)\| \to 0$, we get 
\begin{equation}\label{eq:lemma5-hessian-vanishing}
\|\nabla^2 \mathcal{L}_{\bm{\eta}^\tau}^\tau - \mathcal{H}^\tau\| = o(1)\,. \end{equation}
The augmented-Lagrangian gradient can be written compactly as
\begin{equation*}
\bm g_{\bm\eta}^\tau
=P^\tau\nabla\mathcal L^\tau
=-P^\tau\Gamma^\tau\bm s^\tau.
\end{equation*}
Using \eqref{eq:lemma5-taylor-1}, \eqref{eq:lemma5-hessian-vanishing}, and $\bm e^\tau=\tilde{\bm s}^\tau-\bm s^\tau$, we obtain
\begin{align*}
\mathcal L_{\bm\eta^\tau}^{\tau+1}
\leq{}& \mathcal L_{\bm\eta^\tau}^{\tau}
+\frac12(\bm g_{\bm\eta}^\tau)^T\tilde{\bm s}^\tau
+\frac12(\tilde{\bm s}^\tau)^T\mathcal H^\tau\bm e^\tau \nonumber \\
&+\frac12(\tilde{\bm s}^\tau)^T
(\mathcal H^\tau-P^\tau\Gamma^\tau)\bm s^\tau
+o(\|\tilde{\bm s}^\tau\|^2).
\end{align*}
Direct multiplication gives
\begin{align*}
\mathcal H^\tau-P^\tau\Gamma^\tau
=
\begin{pmatrix}
(I+\eta_2^\tau H^\tau)(H^\tau-\hat H^\tau) & \bm0\\
\eta_2^\tau G^\tau(H^\tau-\hat H^\tau) & \bm0
\end{pmatrix}.
\end{align*}
We first bound the last term. From \eqref{eq:perturbed-bound}, $\|\tilde{\bm s}^\tau\|\leq2\|\bm s^\tau\|$. Additionally, $\|H^\tau\| \allowbreak \leq \Upsilon_{\rm Upper}$ by Assumption \ref{assumption:compactness}, while $\|G^\tau\| \leq 1+2\Upsilon_{\rm Upper}$ by Assumption \ref{assumption:uniform-boundedness} and \cite[Lemma 5.1]{naFOTD}. Hence, we obtain by Assumption \ref{assumption:modified-hessian-vanishes}
\begin{align*}
\bigg|\frac12(&\tilde{\bm s}^\tau)^T
(\mathcal H^\tau-P^\tau\Gamma^\tau)\bm s^\tau\bigg| \\
\quad \leq&\|\bm s^\tau\|\,\|H^\tau-\hat H^\tau\|\,
\|\Delta\bm z^\tau\|
\bigl(1+\eta_2^0(1+3\Upsilon_{\rm Upper})\bigr)  \\
\leq&\; o(\|\bm s^\tau\|^2). 
\end{align*}
To bound the cross term, we can again apply \eqref{eq:perturbed-bound} along with \eqref{eq:lemma3-residual-final} yielding
\begin{equation*}
\left|\frac12(\tilde{\bm s}^\tau)^T\mathcal H^\tau\bm e^\tau\right|
\leq \epsilon^\tau\|\mathcal H^\tau\|\,\|\bm s^\tau\|^2.
\end{equation*}
To bound $\|\mathcal{H}^\tau\|$, note first that while 
\begin{equation*}
\Upsilon^\tau = \max\{\|G^\tau\|, \|\hat H^\tau\|, 1\}
\end{equation*}
is defined through the modified Hessian $\hat H^\tau$, Assumption \ref{assumption:modified-hessian-vanishes} gives $\|H^\tau\| \allowbreak \leq \|\hat H^\tau\| + \|H^\tau - \hat H^\tau\| \allowbreak \leq \Upsilon^\tau + o(1)$, so that both $\|H^\tau\|$ and $\|G^\tau\|$ are bounded by $\Upsilon^\tau$ up to an $o(1)$ term that is absorbed into the remainder for sufficiently large $\tau$. Using this, we have that
\begin{align*}
\|H^\tau + \eta_1^\tau (G^\tau)^TG^\tau + \eta_2^\tau (H^\tau)^2 \| \leq&\;\Upsilon^\tau+(\eta_1^\tau+\eta_2^\tau) (\Upsilon^\tau)^2  \\
\|(I+\eta_2^\tau H^\tau)(G^\tau)^T\| \leq&\; (1+\eta_2^\tau \Upsilon^\tau)\Upsilon^\tau \\
\|\eta_2^\tau G^\tau (G^\tau)^T \|\leq&\; \eta_2^\tau( \Upsilon^\tau)^2\,.
\end{align*}
Hence, we obtain
\begin{align*}
\|\mathcal{H}^\tau\| \leq 3 \Upsilon^\tau + (\eta_1^\tau + 4 \eta_2^\tau )(\Upsilon^\tau)^2\,. 
\end{align*}
This results in 
\begin{align}
& \hskip-0.2cm\mathcal{L}_{\bm{\eta}^\tau}^{\tau+1}
\leq \mathcal{L}_{\bm{\eta}^\tau}^\tau
+\frac12(\bm g_{\bm\eta}^\tau)^T\tilde{\bm s}^\tau  \nonumber \\
& \qquad +4\epsilon^\tau(\Upsilon^\tau)^2
(1+\eta_1^\tau+\eta_2^\tau)\|\bm s^\tau\|^2
+o(\|\bm s^\tau\|^2). 
\label{eq:lemma5-intermediate}
\end{align}
We now can invoke the fact that $\epsilon^\tau \leq \epsilon^\tau_{\rm max}$. Using this, we have that 
\begin{align}
4.05\epsilon^\tau&(\Upsilon^\tau)^2 (1+\eta_1^\tau + \eta_2^\tau) \|\bm s^\tau\|^2  \nonumber \\ 
\leq&\; \frac{4.05(0.5-\beta)\eta_2^\tau }{(\Psi^\tau)^2} \|\bm s^\tau\|^2   \nonumber \\
\leq&\;  \frac{8.1(0.5-\beta)\eta_2^\tau}{2(\Psi^\tau)^2}\|(\Gamma^\tau)^{-1}\|^2 \|\nabla \mathcal{L}^\tau\|^2   \nonumber \\
\leq&\; \frac{(0.5-\beta)\eta_2^\tau}{2} \|\nabla \mathcal{L}^\tau\|^2   \nonumber \\
\leq&\;-(0.5-\beta)(\bm g_{\bm\eta}^\tau)^T\tilde{\bm s}^\tau\,, 
\label{eq:lemma5-3rd-term-bound}
\end{align}
where the first inequality uses $\epsilon^\tau \leq \epsilon^\tau_{\rm max}$ together with the definition of $\epsilon^\tau_{\rm max}$ in \eqref{eq:globalEpsilonMin} (which cancels the factor $(\Upsilon^\tau)^2(1+\eta_1^\tau+\eta_2^\tau)$). The second inequality uses $\|\bm s^\tau\| \leq \|(\Gamma^\tau)^{-1}\|\,\|\nabla\mathcal{L}^\tau\|$ and the third uses $\|(\Gamma^\tau)^{-1}\| \leq (7/20)\Psi^\tau$ from \eqref{eq:lemma3-kkt-inv-bound}, so that $8.1\,(7/20)^2 < 1$. The last uses the descent direction condition \eqref{eq:descent-direction-condition}. Note that we have replaced the coefficient $4$ in \eqref{eq:lemma5-intermediate} by $4.05$: since $\epsilon^\tau=\epsilon^{\bar\tau}>0$ for all sufficiently large $\tau$ (Lemma \ref{lem:stability}), the surplus $0.05\,\epsilon^\tau(\Upsilon^\tau)^2(1+\eta_1^\tau+\eta_2^\tau)\|\bm s^\tau\|^2$ is a fixed positive multiple of $\|\bm s^\tau\|^2$ and thereby absorbs the $o(\|\bm s^\tau\|^2)$ remainder of \eqref{eq:lemma5-intermediate} for sufficiently large $\tau$. Substituting \eqref{eq:lemma5-3rd-term-bound} into \eqref{eq:lemma5-intermediate} therefore yields
\begin{equation*}
\mathcal{L}_{\bm{\eta}^\tau}^{\tau + 1}
\leq \mathcal{L}_{\bm{\eta}^\tau}^\tau
+\beta(\bm g_{\bm\eta}^\tau)^T\tilde{\bm s}^\tau,
\end{equation*}
which is exactly the line search condition \eqref{eq:line-search-cond} with $\alpha^\tau = 1$. Hence, the unit step size is accepted for all sufficiently large $\tau$, completing the claim.    
\end{proof}

\subsection{Proof of Theorem \ref{thm:local-linear}}

\begin{proof}    
By Lemma~\ref{lem:unit-step-size}, $\alpha^\tau=1$ for all sufficiently large $\tau$. For such $\tau$, the updates satisfy $\bm{z}^{\tau+1}=\bm{z}^\tau+\Delta\tilde{\bm{z}}^\tau$ and $\bm{\lambda}^{\tau+1}=\bm{\lambda}^\tau+\Delta\tilde{\bm{\lambda}}^\tau$. We therefore decompose the error at the next iterate as
\begin{align}
\begin{pmatrix}
\bm{z}^\tau + \Delta \tilde{\bm{z}}^\tau - \bm{z}^\ast \\ \bm{\lambda}^\tau + \Delta \tilde{\bm{\lambda}}^\tau - \bm{\lambda}^\ast
\end{pmatrix} 
& = \begin{pmatrix}
\bm{z}^\tau + \Delta \bm{z}^\tau - \bm{z}^\ast \\
\bm{\lambda}^\tau + \Delta \bm{\lambda}^\tau - \bm{\lambda}^\ast
\end{pmatrix}  \nonumber \\
& \quad + 
\begin{pmatrix}
\Delta \tilde{\bm{z}}^\tau - \Delta \bm{z}^\tau \\ \Delta \tilde{\bm{\lambda}}^\tau - \Delta \bm{\lambda}^\tau
\end{pmatrix}\,. 
\label{eq:thm2-expansion}
\end{align}
Using the definition of the Newton system, we have the first term satisfies
\begin{align}
&  \begin{pmatrix}
\bm{z}^\tau + \Delta \bm{z}^\tau - \bm{z}^\ast \\
\bm{\lambda}^\tau + \Delta \bm{\lambda}^\tau - \bm{\lambda}^\ast
\end{pmatrix}  \nonumber\\
&\; = (\Gamma^\tau)^{-1} \Gamma^\tau \begin{pmatrix}
\bm{z}^\tau - \bm{z}^\ast \\ \bm{\lambda}^\tau - \bm{\lambda}^\ast
\end{pmatrix} + \begin{pmatrix}\Delta \bm{z}^\tau \\ \Delta \bm{\lambda}^\tau \end{pmatrix}   \nonumber \\
&\;  =  (\Gamma^\tau)^{-1} \Gamma^\tau \begin{pmatrix}
\bm{z}^\tau - \bm{z}^\ast \\ \bm{\lambda}^\tau - \bm{\lambda}^\ast
\end{pmatrix} - (\Gamma^\tau)^{-1} \nabla \mathcal{L}^\tau   \nonumber \\
&  \;= (\Gamma^\tau)^{-1} \left(\Gamma^\tau \begin{pmatrix}
\bm{z}^\tau - \bm{z}^\ast \\ \bm{\lambda}^\tau - \bm{\lambda}^\ast
\end{pmatrix}  - \nabla \mathcal{L}^\tau \right)   \nonumber \\
&  \;'= (\Gamma^\tau)^{-1}  \Bigg(\Gamma^\tau \begin{pmatrix}
\bm{z}^\tau - \bm{z}^\ast \\ \bm{\lambda}^\tau - \bm{\lambda}^\ast
\end{pmatrix} - (\nabla \mathcal{L}^\tau - \nabla \mathcal{L}(\bm{z}^\ast, \bm{\lambda}^\ast)) \Bigg). 
\label{eq:thm2-term1}
\end{align}
Now, define the matrices
\begin{align*}
H^\tau(t) :=&\; H(\bm{z}^\tau + t(\bm{z}^\ast -\bm{z}^\tau), \bm{\lambda}^\tau + t(\bm{\lambda}^\ast - \bm{\lambda}^\tau))\,,  \\ 
G^\tau(t) :=&\; G(\bm{z}^\tau + t(\bm{z}^\ast -\bm{z}^\tau)) \,. 
\end{align*}
For large enough $\tau$, convergence of the iterates and openness of $\mathcal U$ ensure that the line segment joining $(\bm z^\tau,\bm\lambda^\tau)$ and $(\bm z^\ast,\bm\lambda^\ast)$ lies in $\mathcal U$. Therefore, Taylor's \\  theorem gives
\begin{align}
\nabla \mathcal{L}^{\tau}& -\nabla \mathcal{L}(\bm{z}^\ast, \bm{\lambda}^\ast)  \nonumber \\
& =\; \int_0^1 \nabla^2\mathcal{L}(\bm{z}^\tau + t(\bm{z}^\ast - \bm{z}^\tau), \bm{\lambda}^\tau + t(\bm{\lambda}^\ast - \bm{\lambda}^\tau))  \nonumber \\ &\quad  \times \begin{pmatrix}
\bm{z}^\tau - \bm{z}^\ast \\\bm{\lambda}^\tau - \bm{\lambda}^\ast
\end{pmatrix} dt   \nonumber \\
&=\; \int_0^1 \begin{pmatrix}
H^\tau(t) & (G^\tau(t))^T \\ G^\tau(t) & \bm{0} \end{pmatrix}\begin{pmatrix}
\bm{z}^\tau - \bm{z}^\ast \\\bm{\lambda}^\tau - \bm{\lambda}^\ast
\end{pmatrix} dt  \,. 
\label{eq:thm2-taylors}
\end{align}
Hence, combining \eqref{eq:thm2-term1} with \eqref{eq:thm2-taylors} yields
\begin{align*}
&\begin{pmatrix}
\bm{z}^\tau + \Delta \bm{z}^\tau - \bm{z}^\ast \\
\bm{\lambda}^\tau + \Delta \bm{\lambda}^\tau - \bm{\lambda}^\ast
\end{pmatrix} \\
&\quad =\;  (\Gamma^\tau)^{-1} \left( \int_0^1 \begin{pmatrix}
\hat{H}^\tau - H^\tau(t) & (G^\tau)^T - (G^\tau(t))^T \\ G^\tau - G^\tau(t) & \bm{0}
\end{pmatrix} \right. \\ &\qquad \left. \begin{pmatrix}
\bm{z}^\tau - \bm{z}^\ast \\ \bm{\lambda}^\tau - \bm{\lambda}^\ast 
\end{pmatrix} dt \right)\,. 
\end{align*}
Now, using the fact that the KKT inverse is bounded, we obtain
\begin{align}
&\left\|  \begin{pmatrix}
\bm{z}^\tau + \Delta \bm{z}^\tau - \bm{z}^\ast \\
\bm{\lambda}^\tau + \Delta \bm{\lambda}^\tau - \bm{\lambda}^\ast
\end{pmatrix}  \right\|  \nonumber \\
\qquad &\leq\;  \|(\Gamma^\tau)^{-1} \| \left\| \int_0^1 \begin{pmatrix}
\hat{H}^\tau - H^\tau(t) & (G^\tau)^T - (G^\tau(t))^T \\ G^\tau - G^\tau(t) & \bm{0}
\end{pmatrix} \right.  \nonumber \\ &\qquad  \times \left. \begin{pmatrix}
\bm{z}^\tau - \bm{z}^\ast \\ \bm{\lambda}^\tau - \bm{\lambda}^\ast 
\end{pmatrix} dt \right\|   \nonumber \\
\qquad &\leq C_{\Gamma^{-1}} \int_0^1 \left\| \begin{pmatrix}
\hat{H}^\tau - H^\tau(t) & (G^\tau)^T - (G^\tau(t))^T \\ G^\tau - G^\tau(t) & \bm{0}
\end{pmatrix} \right\|  \nonumber \\ &\qquad \times \left \|\begin{pmatrix}
\bm{z}^\tau - \bm{z}^\ast \\ \bm{\lambda}^\tau - \bm{\lambda}^\ast 
\end{pmatrix}  \right\| dt\,. 
\label{eq:thm2-intermediate-bound-2}
\end{align}
Let $C_H$ and $C_G$ denote the Lipschitz constants for the global matrices $H$ and $G$ induced by Assumption~\ref{assumption:local-lipschitz} and their block structure. Using these constants, the triangle inequality, and Assumption \ref{assumption:modified-hessian-vanishes}, we have
\begin{align}
& \Bigg\| \begin{pmatrix}
\hat{H}^\tau - H^\tau(t) & (G^\tau)^T - (G^\tau(t))^T \\ G^\tau - G^\tau(t) & \bm{0}
\end{pmatrix} \Bigg\|  \nonumber \\
& \leq\; \|\hat{H}^\tau - H^\tau(t)\| + 2 \|G^\tau - G^\tau(t)\|   \nonumber \\
&\leq\; \|\hat{H}^\tau - H^\tau\| + \|H^\tau - H^\tau(t)\| + 2 \|G^\tau - G^\tau(t)\|   \nonumber \\
&\leq\; o(1) + C_H t \left \|\begin{pmatrix}
\bm{z}^\tau - \bm{z}^\ast \\ \bm{\lambda}^\tau - \bm{\lambda}^\ast
\end{pmatrix} \right\| + 2C_G t \left\| \bm{z}^\tau - \bm{z}^\ast \right\|   \nonumber \\
&\leq\;o(1) + O\left( \left \|\begin{pmatrix}
\bm{z}^\tau - \bm{z}^\ast \\ \bm{\lambda}^\tau - \bm{\lambda}^\ast
\end{pmatrix} \right\| \right)\,. 
\label{eq:thm2-matrix-bound}
\end{align}
Combining \eqref{eq:thm2-matrix-bound} with \eqref{eq:thm2-intermediate-bound-2} yields
\begin{align}
\bigg\| &  \begin{pmatrix}
\bm{z}^\tau + \Delta \bm{z}^\tau - \bm{z}^\ast \\
\bm{\lambda}^\tau + \Delta \bm{\lambda}^\tau - \bm{\lambda}^\ast
\end{pmatrix}  \bigg\|  \nonumber \\
\leq&\;  C_{\Gamma^{-1}}\left( o(1) +   O\left( \left \|\begin{pmatrix}
\bm{z}^\tau - \bm{z}^\ast \\ \bm{\lambda}^\tau - \bm{\lambda}^\ast
\end{pmatrix} \right\| \right) \right) \left\|\begin{pmatrix}
\bm{z}^\tau - \bm{z}^\ast \\ \bm{\lambda}^\tau - \bm{\lambda}^\ast 
\end{pmatrix} \right\|   \nonumber \\
\leq&\; o\!\left(\left\|\begin{pmatrix}
\bm{z}^\tau - \bm{z}^\ast \\ \bm{\lambda}^\tau - \bm{\lambda}^\ast 
\end{pmatrix} \right\|\right) + O\!\left(\left\|\begin{pmatrix}
\bm{z}^\tau - \bm{z}^\ast \\ \bm{\lambda}^\tau - \bm{\lambda}^\ast 
\end{pmatrix} \right\|^2\right). 
\label{eq:thm2-exact-newton-bound}
\end{align}
Combining \eqref{eq:thm2-exact-newton-bound} with \eqref{eq:thm2-expansion} and \eqref{eq:lemma3-residual-final} yields
\begin{align*}
&\left\| \begin{pmatrix}
\bm{z}^\tau + \Delta \tilde{\bm{z}}^\tau - \bm{z}^\ast \\ \bm{\lambda}^\tau + \Delta \tilde{\bm{\lambda}}^\tau - \bm{\lambda}^\ast
\end{pmatrix} \right\|  \nonumber \\ 
\qquad &\leq\; \left\| \begin{pmatrix}
\bm{z}^\tau + \Delta \bm{z}^\tau - \bm{z}^\ast \\
\bm{\lambda}^\tau + \Delta \bm{\lambda}^\tau - \bm{\lambda}^\ast
\end{pmatrix} \right\| + \left\| 
\begin{pmatrix}
\Delta \tilde{\bm{z}}^\tau - \Delta \bm{z}^\tau \\ \Delta \tilde{\bm{\lambda}}^\tau - \Delta \bm{\lambda}^\tau
\end{pmatrix} \right\|  \\
&\leq\;  \left\| \begin{pmatrix}
\bm{z}^\tau + \Delta \bm{z}^\tau - \bm{z}^\ast \\
\bm{\lambda}^\tau + \Delta \bm{\lambda}^\tau - \bm{\lambda}^\ast
\end{pmatrix} \right\| +\epsilon^\tau  \left\|\begin{pmatrix}
\Delta \bm{z}^\tau \\ \Delta \bm{\lambda}^\tau
\end{pmatrix} \right\|  \\
&\leq\;   \left\| \begin{pmatrix}
\bm{z}^\tau + \Delta \bm{z}^\tau - \bm{z}^\ast \\
\bm{\lambda}^\tau + \Delta \bm{\lambda}^\tau - \bm{\lambda}^\ast
\end{pmatrix} \right\| \\
&\quad +\epsilon^\tau \left( \left\|\begin{pmatrix}
\bm{z}^\tau - \bm{z}^\ast \\ \bm{\lambda}^\tau - \bm{\lambda}^\ast
\end{pmatrix} \right\| + \left\| \begin{pmatrix}
\bm{z}^\tau + \Delta \bm{z}^\tau - \bm{z}^\ast \\ \bm{\lambda}^\tau + \Delta \bm{\lambda}^\tau - \bm{\lambda}^\ast 
\end{pmatrix} \right\| \right)  \\
&\leq\; \epsilon^\tau \left\| \begin{pmatrix}
\bm{z}^\tau - \bm{z}^\ast \\ \bm{\lambda}^\tau - \bm{\lambda}^\ast 
\end{pmatrix} \right\| + o\left(\left\| \begin{pmatrix}
\bm{z}^\tau - \bm{z}^\ast \\ \bm{\lambda}^\tau - \bm{\lambda}^\ast 
\end{pmatrix} \right\| \right)\,. 
\end{align*}
Then, by $\epsilon^\tau = \epsilon^{\bar\tau} < 1$ for all $\tau \geq \bar\tau$ and the exact stabilization given in Lemma \ref{lem:stability} completes the result.
\end{proof}

\section{Numerical Results: Additional Experiments and Details}\label{appendix:experimental_details}

We briefly discuss the baseline implementations and metric calculations used in this study. Note that all wall-clock experiments in this appendix were run on an AMD Ryzen 9 7950X CPU on a single uncontested thread serially. Lastly, we provide the full hyperparameters for reproducibility in Table \ref{tab:experiment-config}.

\subsection{Baseline Implementations} \label{appendix:baselines}
\begin{itemize}[leftmargin=1.4em,itemsep=3pt,topsep=3pt]
\item \textbf{IPOPT} \cite{10.5555/3114195.3114663} The full-horizon NLP \eqref{eq:main-problem} is handed to Ipopt (via CasADi with exact first/second derivatives) with no temporal decomposition.
\item \textbf{MultiShoot} \cite{BOCK19841603} The horizon is split into $M$ \emph{non-overlapping} intervals, each an independent initial value problem constrained NLP solved to convergence, with continuity (state-matching) constraints at the interval boundaries coordinated by an outer Newton iteration.
\item \textbf{ADMM} \cite{8186925} We deploy ADMM with fixed consensus penalty $\chi=10$: the horizon is decomposed into $M$ temporal blocks, each block solved to convergence, and the boundary/consensus variables are coupled through scaled dual updates.
\item \textbf{Schwarz} \cite{9840913} The overlapping-Schwarz (exact-OTD) scheme solves each nonlinear local OCP \eqref{eq:decomposed-main-problem} to convergence at a fixed overlap $b$ and composes the result with $\mathcal{C}$. It is the nonlinear-local-OCP baseline for our decomposition and thus isolates the cost of solving each nonlinear local OCP to convergence rather than taking a single inexact SQP step.
\item \textbf{FOTD (LU) and FOTD (GMRES)} \cite{naFOTD} At each outer iteration a \emph{single} SQP step is taken for each nonlinear local OCP by solving its local KKT linear system \eqref{eq:newton}. The direct variant uses an exact LU factorization. For comparison, we also solve the same local KKT system iteratively to $\epsilon_i^{\rm FOTD}=10^{-14}$ for swing and $10^{-11}$ for Burgers using GMRES with an inner-iteration cap of $100$. For all GMRES solves, the original local residual is checked every five Krylov iterations. We use the standard incomplete LU preconditioner~\cite{meijerink1977iterative} for all GMRES solves in the experiments. FOTD requires highly accurate local solves for a fair comparison, and without preconditioning GMRES stalls before reaching the prescribed tolerances on the indefinite saddle-point KKT systems in these OCPs.
\item \textbf{AOTD (GMRES) and AOTD (sGMRES)} The proposed adaptive overlapping temporal decomposition (Algorithm~\ref{alg:aotd-compact}) with an incomplete LU preconditioner and the GMRES inner solver (inner-iteration cap $100$, tolerance set adaptively) to solve the local KKT system. As in FOTD (GMRES), AOTD (GMRES) checks the original local residual every five Krylov iterations. The preconditioner is rebuilt at each accuracy pass when the overlap changes, because enlarging $b_i^\tau$ changes the local window and KKT matrix dimension. We additionally compare against a restarted Gaussian-sketched residual-minimization variant inspired by \cite{doi:10.1137/23M1565413}. Suppressing the subproblem and SQP iteration indices, let $\Gamma_{\mathrm{loc}}\bm s_{\rm loc}=\bm f$ denote the local KKT system and $P_{\rm loc}$ its left preconditioner, with $\bm s_{\rm loc}$ the exact local primal--dual direction and $\tilde{\bm s}_{\rm loc}$ its current approximation. At each restart, define $\widehat\Gamma_{\mathrm{loc}}=P_{\rm loc}^{-1}\Gamma_{\mathrm{loc}}$ and $\widehat{\bm r}=P_{\rm loc}^{-1}(\bm f-\Gamma_{\mathrm{loc}}\tilde{\bm s}_{\rm loc})$. Each restart cycle constructs a normalized power basis $\bm V$ of at most $m$ vectors, draws an independent sketch $\bm S\in\mathbb{R}^{\ell\times n}$ with entries $S_{ij}\sim\mathcal{N}(0,1/\ell)$ and $\ell=\min\{2m+5,n\}$, and computes the update from the sketched least-squares problem $\min_{\bm y}\|\bm S(\widehat\Gamma_{\mathrm{loc}}\bm V\bm y-\widehat{\bm r})\|$, followed by the update $\tilde{\bm s}_{\rm loc}\leftarrow\tilde{\bm s}_{\rm loc}+\bm V\bm y$. We use $(m,\ell)=(5,15)$ for swing and $(10,25)$ for Burgers, with caps of $100$ and $200$ matrix-vector products, respectively, across all restart cycles. For both GMRES and sGMRES, the local stopping condition \eqref{eq:local-residual-cond} is checked using the original, unsketched and unpreconditioned local KKT residual. After composing the local directions, the global accuracy condition \eqref{eq:globalResidualBound} is checked using the full unsketched global residual.
\end{itemize}
\subsection{Cost Calculations: Estimated FLOPs and Wall-Clock} \label{appendix:cost}

\subsubsection{Estimated FLOPs}
We report estimated FLOPs under one convention for all methods, where $\operatorname{nnz}(\cdot)$ denotes the number of structural nonzeros of a matrix. For a window of $N_{\rm loc}$ stages, the KKT system has half-bandwidth $b_w := 2n_x+n_u$ and dimension $n := N_{\rm loc}\,b_w+2n_x$. Thus, one banded LU factorization costs $\Phi := 2n\,b_w^2$ FLOPs. The nonlinear-programming baselines (IPOPT with $N_{\rm loc}=N$, and the $M$ windows, blocks, or intervals of Schwarz, ADMM, and MultiShoot) solve each NLP with an interior-point method and are charged $\Phi$ per iteration, summed over all subproblems and coordination iterations.

For FOTD and AOTD, costs accumulate per outer iteration over the local solves actually executed. Suppressing the subproblem and SQP iteration indices, write $H_{\rm loc}:=\hat H_i^\tau$, $G_{\rm loc}:=G_i^\tau$, and $\Gamma_{\rm loc}:=\Gamma_i^\tau$. An LU solve is charged $\Phi$. A $K$-iteration GMRES solve on the local KKT matrix $\Gamma_{\rm loc}$ is charged
\begin{equation*}
F_{\mathrm{GMRES}}
= 2K\operatorname{nnz}(\Gamma_{\rm loc}) + 2K^2 n
+ F_{\mathrm{build}} + K F_{\mathrm{apply}},
\end{equation*}
where the first two terms cover matrix--vector products and full reorthogonalization, and the last two the ILU--Schur preconditioner. For the latter, let $n_{z,\rm loc}$ be the number of local primal variables, $L$ the number of nonzeros in the ILU factors of the Schur complement $S_{\rm Schur}=G_{\rm loc}\operatorname{diag}(H_{\rm loc})^{-1}G_{\rm loc}^\top$, and $A_{\rm loc}:=G_{\rm loc}\operatorname{diag}(H_{\rm loc})^{-1}$. We charge
\begin{equation*}
F_{\mathrm{build}}=F_S+2L, \qquad
F_{\mathrm{apply}}=n_{z,\rm loc}+2L,
\end{equation*}
with the standard sparse-product count
\begin{equation*}
F_S=\operatorname{nnz}(G_{\rm loc})
+2\!\!\sum_{(i,j)\,:\,(A_{\rm loc})_{ij}\neq 0}\!\!\operatorname{nnz}((G_{\rm loc})_{:,j}).
\end{equation*}
The sketched variant (sGMRES), with restart dimension $m$ and $\ell=\min\{2m+5,\,n\}$ sketch rows as in Table~\ref{tab:experiment-config}, is charged $2\ell nm+2\ell n+2\ell m^2+2nm$ per restart for sketching and least squares, plus $2\operatorname{nnz}(\Gamma_{\rm loc})$ per matrix--vector product and preconditioner costs.

Preconditioner construction is charged once per build, and the factors are reused across inner iterations. On Burgers, AOTD further reuses factors across accuracy passes and outer iterations while the local dimension is unchanged, rebuilding once the iteration count exceeds twice that of a fresh build. On swing, AOTD and FOTD rebuild for every solve. For all methods we exclude matrix assembly, line-search, and communication costs, as well as IPOPT's internal regularization and corrections.

\subsubsection{Wall-Clock}
The wall-clock estimate is computed over outer iterations $\tau$ and synchronized accuracy-loop passes $j\in\mathcal J_\tau$ as
\begin{equation*}
T_{\parallel}
=\sum_{\tau}\Bigl(\,T^{\tau}_{\mathrm{ls}}
+\sum_{j\in\mathcal J_\tau}
\bigl(T^{\tau,j}_{\mathrm{glb}}
+\max_{i}\,T^{\tau,j}_{\mathrm{loc},i}\bigr)\Bigr),
\end{equation*}
where $T^{\tau,j}_{\mathrm{glb}}$ collects the global computations of pass $j$, $T^{\tau,j}_{\mathrm{loc},i}$ the local solve of subproblem $i$, and $T^{\tau}_{\mathrm{ls}}$ the line-search evaluations after acceptance.

\begin{table*}[tp]\centering \scriptsize
\setlength{\tabcolsep}{6pt}
\setlength{\aboverulesep}{1pt}\setlength{\belowrulesep}{1pt}
\caption{Full experimental configuration for the two benchmark OCPs. Problem-specific quantities appear in the first two blocks. The AOTD algorithm parameters and solver stack are shared except where the columns differ. A dash (---) marks a parameter that does not apply to that problem. Initial conditions are described in Section~\ref{sec:simulations}.}
\label{tab:experiment-config}
\begin{tabular}{llcc}
\toprule
Parameter & Symbol & NE39 swing OCP & Viscous Burgers \\
\midrule
\multicolumn{4}{l}{\emph{Problem and discretization}}\\
State / control dimension            & $n_x,\ n_u$                    & $20,\ 10$    & $24,\ 24$ \\
Time step                            & $\Delta t$                     & $0.1$        & $0.004$ \\
Horizon length                       & $N$                            & $1000$       & $10{,}000,\ 50{,}000$ \\
Horizon time                         & $T=N\Delta t$                  & $100$        & $40,\ 200$ \\
Viscosity, control penalty           & $\kappa,\ \psi$                & ---          & $0.02,\ 0.001$ \\
State / stage weight                 & $Q_k^{\rm cost}$                            & $10\,I$      & $\Delta t\cdot I$ \\
Control weight                       & $R_k^{\rm cost}$                            & $0.05\diag(c_1,\ldots,c_{10})$\textsuperscript{$\dagger$} & $\psi\cdot\Delta t\cdot I$ \\
Terminal weight                      & $Q_N^{\rm cost}$                          & $10\,I$      & $I$ \\
Target                               & $y_{\rm des}$                  & origin       & $0$ \\
\midrule
\multicolumn{4}{l}{\emph{Decomposition and baselines}}\\
FOTD inner tolerance                 & $\epsilon_i^{\rm FOTD}$        & $10^{-14}$   & $10^{-11}$ \\
Interface penalty                    & $\mu$                          & $10$         & $10$ \\
Initial merit weights (fixed for FOTD) & $\eta_1^0,\ \eta_2^0$        & $10,\ 0.5$   & $10,\ 0.5$ \\
Backtracking factor                  & $\delta$                       & $0.5$        & $0.5$ \\
Seeds (IC noise)                     & ---                            & $5$          & $5$ \\
\midrule
\multicolumn{4}{l}{\emph{AOTD parameters}}\\
Overlap / tol.\ adaptation rates     & $\varrho_b,\ \varrho_\epsilon$ & $32,\ 0.5$   & $200,\ 0.1$ \\
$\eta$-update / forcing rate         & $\nu$                          & $1.01$       & $2$ \\
Initial global tolerance             & $\epsilon^0$                   & $1$          & $1$ \\
Armijo constant                      & $\beta$                        & $10^{-3}$    & $10^{-4}$ \\
Initial overlap (all $i$)            & $b_i^0$                        & $10$         & $10$ \\
Initial inner tolerance (all $i$)    & $\epsilon_i^0$                 & $10^{-1}$    & $5\times10^{-3}$ \\
Sketch restart dimension             & $m$                            & $5$          & $10$ \\
Sketch rows                          & $\ell=2m+5$                    & $15$         & $25$ \\
\bottomrule
\end{tabular}
\par\smallskip
\begin{minipage}{\textwidth}
\scriptsize
\textsuperscript{$\dagger$}The generator-specific coefficients are
$\bm c=(c_1,\ldots,c_{10})^\top
=(0.94162537,\,1.15464676,\,0.55487802,\,0.71176656,\,1.27476119,\allowbreak
1.02529959,\,1.29460420,\,1.01800112,\,0.99465694,\,0.73467357)^\top$.
\end{minipage}
\end{table*}

\subsection{Nonlinear Frequency Control for Stability of Power Grids Ablation Study: Analysis on Parameter Choices of $\varrho_b$, $\varrho_\epsilon$, $\eta_1^0$, $\eta_2^0$, and $\nu$} \label{appendix:swing-ablation-study}

\begin{figure*}[tp]
\centering
\includegraphics[width=\linewidth]{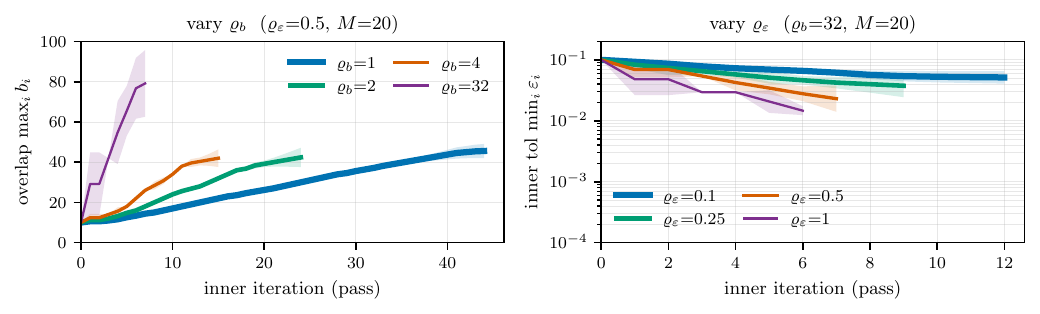}
\caption{Effect of the AOTD adaptation rates on the NE39 swing OCP ($N{=}1000$, $M{=}20$), over the same $5$ seeds used in Table~\ref{tab:highlight_swing}. \textbf{(left)} Ablation study for various adaptive overlap updating rates $\varrho_b\in\{1,2,4,32\}$ (fixed $\varrho_\epsilon{=}0.5$). \textbf{(right)} Ablation study for tolerance adaptation rates $\varrho_\epsilon\in\{0.1,0.25,0.5,1\}$ (fixed $\varrho_b{=}32$). Note, as shown in Table \ref{tab:rate_ablation}, the method converges for all rate choices and all seeds, although the numbers of overlap updates and outer iterations vary.}
\label{fig:ablation-rho-fg}
\end{figure*}

\begin{table*}[h]\scriptsize
\centering
\caption{AOTD adaptation-rate ablation $(\varrho_b,\varrho_\epsilon)$ on the NE39 swing OCP ($N=1000$, $M=20$) (mean$\pm$stdev over five seeds). The \# overlap updates column counts failed accuracy-loop passes of Algorithm~\ref{alg:aotd-compact} on which the local tolerances $\epsilon_i^\tau$ and overlaps $b_i^\tau$ are updated, summed over all outer iterations. The local tolerance floor is $10^{-9}$.}
\label{tab:rate_ablation}
\tablesize\setlength{\tabcolsep}{6pt}
\begin{tabular}{cc r r cc c c}
\toprule
$\varrho_b$ & $\varrho_\epsilon$ & \shortstack{$\|\nabla\mathcal{L}\|$\\($\times 10^{-7}$)} & \shortstack{FLOPs\\($\times 10^{7}$)} & \shortstack{\# overlap\\updates} & \shortstack{\# outer\\iterations} & $b_i^{\max}$ & $\epsilon_i^{\min}$ \\
\midrule
$1$ & $0.1$ & $2.2\pm2.5$ & $73.6\pm16.3$ & $33.4\pm4.1$ & $5.6\pm0.5$ & $43.4\pm4.1$ & $(1.9\pm1.0)\times10^{-3}$ \\
$1$ & $0.25$ & $2.2\pm2.5$ & $83.4\pm18.8$ & $33.4\pm4.1$ & $5.6\pm0.5$ & $43.4\pm4.1$ & $(7.5\pm7.6)\times10^{-6}$ \\
$1$ & $0.5$ & $0.8\pm0.6$ & $113.2\pm21.4$ & $35.6\pm3.5$ & $5.8\pm0.4$ & $45.6\pm3.5$ & $(1.2\pm0.4)\times10^{-9}$ \\
$1$ & $1$ & $0.8\pm0.6$ & $134.5\pm23.5$ & $35.6\pm3.5$ & $5.6\pm0.5$ & $45.6\pm3.5$ & $(1.0\pm0.0)\times10^{-9}$ \\
$2$ & $0.1$ & $2.7\pm2.8$ & $32.5\pm6.3$ & $16.4\pm2.3$ & $5.4\pm0.5$ & $42.8\pm4.7$ & $(1.4\pm0.4)\times10^{-2}$ \\
$2$ & $0.25$ & $1.2\pm1.3$ & $37.0\pm5.8$ & $17.4\pm1.6$ & $5.6\pm0.5$ & $44.8\pm3.2$ & $(5.6\pm2.8)\times10^{-4}$ \\
$2$ & $0.5$ & $2.6\pm2.3$ & $37.2\pm8.4$ & $16.2\pm2.4$ & $5.4\pm0.5$ & $42.4\pm4.8$ & $(1.2\pm1.3)\times10^{-5}$ \\
$2$ & $1$ & $2.5\pm2.4$ & $43.8\pm10.9$ & $16.4\pm2.7$ & $5.2\pm0.4$ & $42.8\pm5.3$ & $(3.3\pm4.5)\times10^{-9}$ \\
$4$ & $0.1$ & $5.2\pm2.8$ & $15.7\pm4.2$ & $7.6\pm1.4$ & $4.8\pm0.7$ & $40.4\pm5.4$ & $(4.0\pm0.6)\times10^{-2}$ \\
$4$ & $0.25$ & $2.7\pm1.9$ & $18.2\pm2.9$ & $8.2\pm1.0$ & $5.2\pm0.4$ & $42.8\pm3.9$ & $(8.6\pm2.2)\times10^{-3}$ \\
$4$ & $0.5$ & $3.1\pm1.9$ & $18.3\pm2.9$ & $8.0\pm1.1$ & $5.2\pm0.4$ & $42.0\pm4.4$ & $(9.1\pm4.3)\times10^{-4}$ \\
$4$ & $1$ & $3.1\pm2.0$ & $19.8\pm3.2$ & $8.2\pm1.5$ & $5.0\pm0.0$ & $42.6\pm5.5$ & $(1.0\pm0.8)\times10^{-5}$ \\
$32$ & $0.1$ & $0.6\pm0.8$ & $14.0\pm2.3$ & $5.8\pm1.9$ & $5.2\pm0.4$ & $107.8\pm4.4$ & $(5.3\pm1.3)\times10^{-2}$ \\
$32$ & $0.25$ & $2.7\pm2.9$ & $9.4\pm1.6$ & $3.4\pm1.4$ & $4.4\pm0.5$ & $93.8\pm15.9$ & $(4.1\pm1.4)\times10^{-2}$ \\
$32$ & $0.5$ & $1.2\pm1.0$ & $9.2\pm1.3$ & $2.8\pm0.7$ & $4.6\pm0.5$ & $79.2\pm16.6$ & $(2.5\pm0.9)\times10^{-2}$ \\
$32$ & $1$ & $1.2\pm1.0$ & $8.1\pm0.9$ & $2.0\pm0.0$ & $4.6\pm0.5$ & $57.6\pm5.5$ & $(1.5\pm0.3)\times10^{-2}$ \\
\bottomrule
\end{tabular}
\end{table*}

\begin{figure*}[!tp]
\centering
\includegraphics[width=\linewidth]{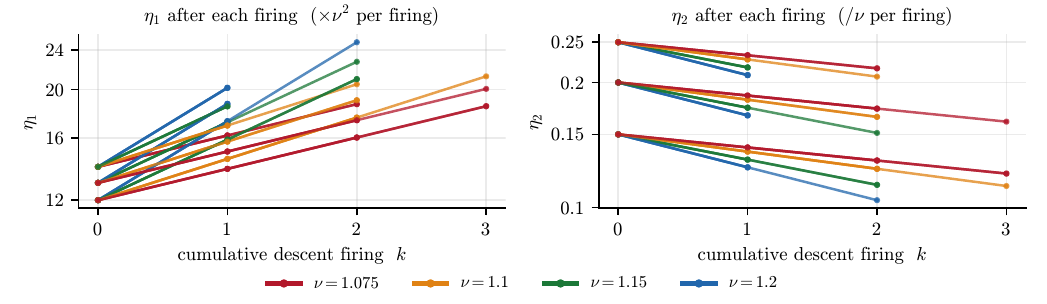}
\caption{Comparison of various $\eta_1^0 \in \{12,13,14\}$, $\eta_2^0 \in \{0.15,0.20,0.25\}$ and $\nu \in \{1.075,1.10,1.15,1.20\}$ values. The initializations are purposely chosen poorly to require the descent condition update.}
\label{fig:ablation_eta_init}
\end{figure*}

We now study the sensitivity of AOTD to its user-specified parameters on the NE39 swing OCP ($N=1000$, $M=20$). The adaptation-rate study is run over the same $5$ seeds as Table~\ref{tab:highlight_swing} and every entry of Table~\ref{tab:rate_ablation} is reported as mean$\pm$stdev across those seeds. The globalization-parameter study that follows is run on a single fixed seed. Recall from Section~\ref{sec:main-algorithm-design} that the method exposes only a small set of tuning knobs: the adaptation rates $\varrho_b$ and $\varrho_\epsilon$ governing how aggressively the overlaps and inner tolerances are updated in \eqref{eq:subproblem-overlap-update} and \eqref{eq:subproblem-error-update}, the initial globalization penalties $\eta_1^0$ and $\eta_2^0$ in the augmented Lagrangian \eqref{eq:augmentedLagrangian}, and the update factor $\nu$ controlling how these penalties change when the descent condition \eqref{eq:descent-direction-condition} fails. The purpose of this study is to confirm that, on this problem, convergence is insensitive to these choices across the ranges swept below, so that the parameters need not be retuned per run.

\subsubsection{Adaptation Rates $\varrho_b$ and $\varrho_\epsilon$}
Figure~\ref{fig:ablation-rho-fg} varies the two adaptation rates one at a time ($\varrho_b\in\{1,2,4,32\}$ at fixed $\varrho_\epsilon{=}0.5$, and $\varrho_\epsilon\in\{0.1,0.25,0.5,1\}$ at fixed $\varrho_b{=}32$), while Table~\ref{tab:rate_ablation} reports the full $\varrho_b\in\{1,2,4,32\}\times\varrho_\epsilon\in\{0.1,0.25,0.5,1\}$ grid with $\nu=1.01$. The method reaches the $10^{-6}$ tolerance for all $80$ runs ($16$ combinations over $5$ seeds). Increasing $\varrho_b$ generally reduces overlap-update counts, although outer-iteration counts are not strictly monotone. At $\varrho_\epsilon=1$, the update count falls from $35.6\pm3.5$ at $\varrho_b=1$ to $2.0\pm0.0$ at $\varrho_b=32$, while outer iterations fall from $5.6\pm0.5$ to $4.6\pm0.5$. The effect of stronger tolerance reduction depends on the overlap rate: it generally increases total estimated FLOPs for $\varrho_b\leq4$, but at $\varrho_b=32$ it reduces update passes and the attained overlap. The lowest mean estimated cost is $(8.1\pm0.9)\times10^7$ FLOPs at $(\varrho_b,\varrho_\epsilon)=(32,1)$, followed by $(9.2\pm1.3)\times10^7$ at $(32,0.5)$ and $(9.4\pm1.6)\times10^7$ at $(32,0.25)$. Thus, mild tolerance updates are not uniformly preferable.

\subsubsection{Globalization Parameters $\eta_1^0$, $\eta_2^0$, and $\nu$}
Figure~\ref{fig:ablation_eta_init} varies the initial penalties $\eta_1^0 \in \{12,13,14\}$ and $\eta_2^0 \in \{0.15,0.20,0.25\}$ together with the update factor $\nu \in \{1.075,1.10,1.15,1.20\}$. All $36$ tested settings converge, although some require increasing the maximum number of inner accuracy-loop passes from $50$ to $60$. To stress the globalization mechanism, the penalties are deliberately initialized away from values that immediately satisfy the descent condition \eqref{eq:descent-direction-condition}, so that the adaptive update \eqref{eq:adaptive-parameter-update} is forced to activate. Larger values of $\nu$ correct the penalties more aggressively and therefore reach a valid descent direction in fewer or equal firings, while smaller values of $\nu$ approach the same stabilized regime through a larger number of smaller adjustments. In all cases the penalties settle after finitely many updates, in agreement with the stabilization guarantee of Lemma~\ref{lem:stability}, after which the iterates proceed with fixed globalization parameters. The rates change the balance of work, while the globalization sweep changes firing counts without changing the three outer iterations. We state this as robustness \emph{within the swept ranges on this problem}. Establishing parameter robustness on unseen problem families would require a correspondingly broader study.

\subsection{Additional Results on Wall-Clock and Parallel Computation Time} \label{appendix:wall_cost}

Tables~\ref{tab:appendix-timing} and~\ref{tab:burgers_appendix} report timings for the NE39 swing and Burgers OCPs; the swing runs use the seeds in Table~\ref{tab:highlight_swing}. Parallel times are idealized estimates from serial runs under the model in Appendix~\ref{appendix:cost}, assuming one processor per subproblem and no communication or synchronization overhead. FOTD has the lowest mean estimates on swing, while an AOTD variant has the lowest mean in three of the six Burgers configurations ($N=10{,}000$ at $M=50,250$ and $N=50{,}000$ at $M=250$). Overlap changes require sparse KKT rebuilds that can offset FLOP savings; reducing this overhead with mutable sparse matrices is left to future work.

\begin{table*}[tp]\centering
\scriptsize 
\caption{Wall clock timing on NE39 swing OCP ($N{=}1000$, mean$\pm$stdev over 5 seeds); parallel times are idealized estimates (Appendix~\ref{appendix:wall_cost}). $M$ is the number of temporal subproblems. We use `---' to indicate the run stalled. Entries `out/in' are means over five seeds; outer iteration means are reported to one decimal, and inner iteration means are rounded to the nearest integer. `Out' counts applied SQP steps for FOTD/AOTD. `In' totals local direct KKT solves for LU, Krylov iterations for GMRES, and KKT matrix-vector products for sGMRES (including residual checks), summed over all subproblems and solve passes. Iteration counts for the other baselines are not reported.}
\label{tab:appendix-timing}
\begin{tabular}{l|rrrr|rrrr}
\toprule
& \multicolumn{4}{c|}{parallel wall $T_{\parallel}$ (s)} & \multicolumn{4}{c}{outer steps / inner steps (solver dependent)} \\
\cmidrule(lr){2-5}\cmidrule(lr){6-9}
Method & $M{=}4$ & $M{=}10$ & $M{=}20$ & $M{=}50$ & $M{=}4$ & $M{=}10$ & $M{=}20$ & $M{=}50$ \\
\midrule
IPOPT & \multicolumn{4}{c|}{$21.2{\pm}0.2$} & \multicolumn{4}{c}{--} \\
\midrule
MultiShoot & $11.8{\pm}0.1$ & --- & --- & --- & -- & -- & -- & -- \\
ADMM ($\chi{=}10$) & $11.7{\pm}0.0$ & $9.8{\pm}0.2$ & --- & --- & -- & -- & -- & -- \\
Schwarz & & & & & & & & \\
\quad $b{=}1$ & $12.0{\pm}0.2$ & --- & --- & --- & -- & -- & -- & -- \\
\quad $b{=}10$ & $12.6{\pm}0.0$ & $11.7{\pm}0.2$ & --- & --- & -- & -- & -- & -- \\
\quad $b{=}50$ & $15.5{\pm}0.0$ & $19.0{\pm}0.2$ & $26.7{\pm}0.2$ & $51.2{\pm}0.2$ & -- & -- & -- & -- \\
\quad $b{=}75$ & $17.5{\pm}0.0$ & $23.6{\pm}0.2$ & $35.3{\pm}0.2$ & $72.1{\pm}0.3$ & -- & -- & -- & -- \\
FOTD (LU) & & & & & & & & \\
\quad $b{=}10$ & {\boldmath $0.8{\pm}0.0$} & --- & --- & --- & 3.0/12 & --- & --- & --- \\
\quad $b{=}50$ & $0.8{\pm}0.0$ & {\boldmath $0.8{\pm}0.0$} & {\boldmath $0.8{\pm}0.0$} & --- & 3.0/12 & 3.0/30 & 3.0/60 & --- \\
\quad $b{=}75$ & $0.9{\pm}0.0$ & $0.9{\pm}0.0$ & $0.8{\pm}0.0$ & {\boldmath $0.8{\pm}0.0$} & 3.0/12 & 3.0/30 & 3.0/60 & 3.0/150 \\
FOTD (GMRES) & & & & & & & & \\
\quad $b{=}10$ & $0.9{\pm}0.0$ & --- & --- & --- & 3.0/221 & --- & --- & --- \\
\quad $b{=}50$ & $0.9{\pm}0.0$ & $0.9{\pm}0.0$ & $0.9{\pm}0.0$ & --- & 3.0/221 & 3.0/221 & 3.0/337 & --- \\
\quad $b{=}75$ & $0.9{\pm}0.0$ & $0.9{\pm}0.0$ & $0.9{\pm}0.0$ & $0.9{\pm}0.0$ & 3.0/221 & 3.0/224 & 3.0/445 & 3.0/891 \\
\midrule
AOTD (GMRES) & $1.4{\pm}0.2$ & $1.3{\pm}0.2$ & $1.3{\pm}0.1$ & $1.3{\pm}0.0$ & 4.8/76 & 4.8/80 & 4.6/109 & 4.0/251 \\
AOTD (sGMRES) & $1.3{\pm}0.1$ & $1.2{\pm}0.1$ & $1.4{\pm}0.2$ & $1.4{\pm}0.1$ & 4.4/127 & 4.4/158 & 5.0/322 & 4.2/1190 \\
\bottomrule
\end{tabular}
\end{table*}

\begin{table*}[tp]\centering    \scriptsize 
\caption{Wall clock timing on the Burgers OCP (mean$\pm$stdev over 5 seeds); parallel times are idealized estimates (Appendix~\ref{appendix:wall_cost}). $M$ is the number of temporal subproblems. We use `---' to indicate the run stalled. Entries `out/in' are means over five seeds; outer iteration means are reported to one decimal, and inner iteration means are rounded to the nearest integer. `Out' counts applied SQP steps for FOTD/AOTD and coordination sweeps for Schwarz. `In' totals local direct KKT solves for LU, Krylov iterations for GMRES, KKT matrix-vector products for sGMRES (including residual checks), and local IPOPT iterations for Schwarz, summed over all subproblems and solve passes.} 
\label{tab:burgers_appendix}
\begin{tabular}{l|rrr|rrr}
\toprule
& \multicolumn{3}{c|}{parallel wall $T_{\parallel}$ (s)} & \multicolumn{3}{c}{outer steps / inner steps (solver dependent)} \\
\cmidrule(lr){2-4}\cmidrule(lr){5-7}
Method & $M{=}10$ & $M{=}50$ & $M{=}250$ & $M{=}10$ & $M{=}50$ & $M{=}250$ \\
\midrule
\multicolumn{7}{l}{\emph{$N = 10{,}000$ \quad (core lengths $1000/200/40$)}} \\
\midrule
Schwarz &  &  &  &  &  &  \\
\quad $b{=}50$ & $40.2{\pm}0.4$ & $39.3{\pm}0.6$ & $63.8{\pm}0.7$ & 2.0/33 & 2.0/159 & 3.0/797 \\
FOTD (LU) &  &  &  &  &  &  \\
\quad $b{=}10$ & $47.4{\pm}8.8$ & $57.9{\pm}9.2$ & -- & 5.6/56 & 6.6/330 & -- \\
\quad $b{=}50$ & {\boldmath $31.0{\pm}9.1$} & $35.3{\pm}9.5$ & $38.6{\pm}12.8$ & 3.6/36 & 4.0/200 & 4.2/1050 \\
FOTD (GMRES) &  &  &  &  &  &  \\
\quad $b{=}10$ & -- & -- & -- & -- & -- & -- \\
\quad $b{=}50$ & $31.8{\pm}9.2$ & $35.6{\pm}9.7$ & -- & 3.6/979 & 4.0/5328 & -- \\
AOTD (GMRES) & $34.9{\pm}3.2$ & $32.7{\pm}3.1$ & {\boldmath $32.9{\pm}4.0$} & 4.2/553 & 3.8/3372 & 3.6/22592 \\
AOTD (sGMRES) & $33.1{\pm}5.0$ & {\boldmath $31.1{\pm}4.0$} & $33.0{\pm}4.0$ & 4.0/692 & 3.6/4306 & 3.6/30565 \\
\midrule[\heavyrulewidth]
\multicolumn{7}{l}{\emph{$N = 50{,}000$ \quad (core lengths $5000/1000/200$)}} \\
\midrule
FOTD (LU) &  &  &  &  &  &  \\
\quad $b{=}10$ & $185.3{\pm}32.4$ & $224.0{\pm}43.5$ & $257.5{\pm}42.2$ & 4.4/44 & 5.6/280 & 6.6/1650 \\
\quad $b{=}50$ & {\boldmath $152.1{\pm}44.6$} & {\boldmath $146.3{\pm}42.5$} & $158.7{\pm}44.1$ & 3.6/36 & 3.6/180 & 4.0/1000 \\
FOTD (GMRES) &  &  &  &  &  &  \\
\quad $b{=}10$ & -- & -- & -- & -- & -- & -- \\
\quad $b{=}50$ & $160.5{\pm}47.9$ & $147.1{\pm}43.9$ & $159.1{\pm}44.7$ & 3.6/963 & 3.6/4651 & 4.0/25596 \\
AOTD (GMRES) & $163.4{\pm}0.7$ & $167.3{\pm}15.7$ & $147.2{\pm}15.3$ & 4.0/487 & 4.2/2633 & 3.8/16571 \\
AOTD (sGMRES) & $162.3{\pm}0.3$ & $158.1{\pm}25.3$ & {\boldmath $139.9{\pm}18.7$} & 4.0/620 & 4.0/3099 & 3.6/19631 \\
\bottomrule
\end{tabular}
\end{table*}

\end{document}